\documentclass[11pt]{amsart}
\usepackage[T1]{fontenc}
\usepackage{txfonts}
\usepackage{float}
\usepackage{amssymb,verbatim}
\usepackage[all]{xy}
\usepackage{subfig}
\usepackage{tikz}
\usepackage{color}
\usepackage{a4wide}
\usepackage{mathrsfs}
\usepackage{longtable}
\usepackage{hyperref}
\usepackage{wrapfig}
\usetikzlibrary{arrows}
\DeclareMathAlphabet{\mathpzc}{OT1}{pzc}{m}{it}

\newcommand{\R}{\mathbb{R}}
\newcommand{\C}{\mathbb{C}}
\newcommand\Z{\mathbb{Z}}
\newcommand{\N}{\mathbb{N}}
\newcommand{\Q}{\mathbb{Q}}

\newcommand{\Mb}{\mathbf{M}}
\newcommand{\Pb}{\mathbb{P}}

\newcommand{\Hbb}{\mathbb{H}}

\newcommand{\fraka}{\mathfrak{a}}

\newcommand{\Mcal}{\mathcal{M}}

\newcommand{\Pcal}{\mathcal{P}}
\newcommand{\Qcal}{\mathcal{Q}}

\newcommand{\Xcal}{\mathcal{X}}

\newcommand{\Bc}{\mathcal{B}}

\newcommand{\Oc}{\mathcal{O}}
\newcommand{\Pc}{\mathcal{P}}
\newcommand{\Qc}{\mathcal{Q}}

\newcommand{\id}{\mathrm{id}}

\newcommand{\SL}{{\rm SL}}
\newcommand{\GL}{{\rm GL}}

\newcommand{\Aa}{\textrm{Area}}

\newcommand{\vol}{{\rm vol}}

\newcommand{\End}{\mathrm{End}}

\renewcommand{\Im}{\mathrm{Im}}
\renewcommand{\Re}{\mathrm{Re}}

\newcommand{\Prym}{\mathrm{Prym}}
\newcommand{\odd}{\mathrm{odd}} 

\newcommand{\ol}{\overline}

\newtheorem{Theorem}{Theorem}[section]
\newtheorem{Corollary}[Theorem]{Corollary}
\newtheorem{Lemma}[Theorem]{Lemma}
\newtheorem{Proposition}[Theorem]{Proposition}

\newtheorem{Claim}[Theorem]{Claim}

\theoremstyle{remark}
\newtheorem{Remark}[Theorem]{Remark}
\begin{document}
\title[Modular forms and Siegel-Veech constants]{Hilbert surfaces, modular forms, and Siegel-Veech constants}
\author{Duc-Manh Nguyen}
\address{Université de Tours, Université d'Orléans, CNRS, IDP, UMR 7013, Parc de Grandmont, 37200 Tours, France}
\email[D.-M.~Nguyen]{duc-manh.nguyen@univ-tours.fr}

\date{\today}
\begin{abstract}
	We give the values of the Siegel-Veech constants associated with saddle connections having distinct endpoints on translation surfaces in Prym eigenform loci in $\Omega \Mcal_3(2,2)^\odd$. In particular, we show that these constants are actually the same for all of these loci. 
	As a by-product, we show that the Euler characteristic of the Hilbert modular surfaces which parametrize Abelian surfaces with $(1,2)$-polarization  admitting a real multiplication and the Euler characteristic of their product locus are related by a simple formula. 
	For principally polarized Abelian surfaces, a similar phenomenon  has been observed by Bainbridge. 
\end{abstract}

\maketitle

\section{Introduction}
\subsection{Statements of the results}\label{subsec:state:results}
Let $D$ be a natural number such that $D>9, \; D \equiv 0,1,4 \; [8]$, and $D$ is not a square. Denote by $\Oc_D$ the real quadratic order of discriminant $D$. Recall that $\Omega \Mcal_3(2,2)^\odd$ is the component of odd spin Abelian differentials having two double zeros on Riemann surfaces of genus three (see \cite{KZ03}). The space of Prym eigenforms for  real multiplication by $\Oc_D$ in $\Omega \Mcal_3(2,2)^\odd$ is denoted by $\Omega E_D(2,2)^\odd$ (for the definition of Prym eigenforms, we refer to \cite{McM:prym, LN:components}). This locus is a closed sub-orbifold of complex dimension three of $\Omega \Mcal_3(2,2)^\odd$ which is invariant by the $\GL^+(2,\R)$-action. It is shown in \cite{LN:components} that $\Omega E_D(2,2)^\odd$ is connected if $D\equiv 0,4 \, [8]$, and has two components if $D\equiv 1 \, [8]$ ($\Omega E_D(2,2)^\odd$ is empty if $D\equiv 5 \, [8]$).  

For $k=1,2,3$, let $c_k(D)$ denote the Siegel-Veech constant associated with saddle connections with multiplicity $k$ that join the two zeros of the Abelian differentials in $\Omega E_D(2,2)^\odd$ (in the case $D\equiv 1 \, [8]$, the Siegel-Veech constants of the two components of $\Omega EçD(2,2)^\odd$ are the same (cf. \cite{Ng:25})). The main aim of this paper is to prove the following

\begin{Theorem}\label{th:SV:const}
	For all $D\in \N, \; D >9, D \equiv 0,1,4 \, [8]$, and $D$ is not a square, we have
	$$
	c_1(D) = \frac{25}{9}, \quad c_2(D)= 3, \quad c_3(D) = \frac{2}{9}.
	$$
\end{Theorem}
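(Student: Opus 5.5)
We will compute the Siegel-Veech constants $c_k(D)$ through the Eskin--Masur--Zorich style degeneration, following the approach Bainbridge used for $\Omega E_D(2)$ and $\Omega E_D(1,1)$. The idea is to express each $c_k(D)$ as a ratio of two volumes. The numerator is the volume (equivalently, the Euler characteristic) of a boundary stratum of the compactified Prym eigenform locus. This stratum is obtained by collapsing a saddle connection of multiplicity $k$ joining the two double zeros. The denominator is the volume of $\Omega E_D(2,2)^\odd$ itself, equivalently the orbifold Euler characteristic of its projectivization.

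\textbf{Step 1: the Siegel--Veech formula.} Using the Siegel--Veech formula, $c_k(D)$ equals the limit
$$\lim_{\epsilon\to 0}\frac{\vol(\text{surfaces with a multiplicity-}k\text{ saddle connection shorter than }\epsilon)}{\pi\epsilon^2\,\vol(\Omega_1E_D(2,2)^\odd)}.$$
When a multiplicity-$k$ family of homologous saddle connections joining the two zeros shrinks, the two double zeros collide. The result is a surface in a smaller locus:
\begin{itemize}
\item for $k=1$, a Prym eigenform in $\Omega E_D(4)$;
\item for $k=2,3$, degenerate surfaces in which homologous saddle connections bound cylinders or slits. These are described by Prym eigenforms in lower genus (genus two eigenforms $\Omega E_{D'}(2)$ glued with tori), together with combinatorial data.
\end{itemize}
Each thin part is then a bundle over the corresponding stratum with fiber a small disc in the period of the collapsing saddle connection. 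Its volume is $\pi\epsilon^2$ times the volume of the boundary locus, times an explicit combinatorial multiplicity. The multiplicity counts the ways of breaking up a zero of order $4$ into two zeros of order $2$ compatibly with the Prym involution.

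\textbf{Step 2: volumes as Euler characteristics.} Following Bainbridge and the paper's by-product, we express each volume as a constant times an orbifold Euler characteristic:
\begin{itemize}
\item the volume of $\Omega E_D(2,2)^\odd$ is $\chi$ of the Hilbert modular surface $X_D$ parametrizing $(1,2)$-polarized Abelian surfaces with real multiplication, since the projectivized eigenform locus is this surface minus curves;
\item the volume of $\Omega E_D(4)$ is the Euler characteristic of the Prym Teichmüller curves $W_D(4)$, known from Möller's work and from the earlier results by Lanneau--Nguyen;
\item the other boundary pieces are related to the product locus $P_D\subset X_D$ and to Teichmüller curves in genus two.
\end{itemize}
The key identity to establish is the relation between $\chi(X_D)$ and $\chi(P_D)$ announced in the abstract, together with the formula $\chi(W_D(4))=c\,\chi(X_D)$ for an explicit constant $c$. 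We obtain this by computing $\chi$ of the compactification $\overline{X_D}$ and using Gauss--Bonnet for the Kähler form pulled back from the Hodge bundle. The class of the curve $W_D(4)$ would then be a fixed combination of the $\lambda$-classes on $X_D$, in the spirit of Bainbridge's computation $\chi(W_D)=-\tfrac92\chi(X_D)$.

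\textbf{Step 3: conclusion.} Substituting these relations, the $D$-dependence cancels in each ratio, and the constants $25/9$, $3$, $2/9$ come out. As a consistency check, $c_1+c_2+c_3$ should match the total constant predicted by the Eskin--Kontsevich--Zorich formula, which relates Siegel--Veech constants to the Lyapunov exponents of the Prym locus. Those exponents are known to be independent of $D$.

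The main obstacle is Step 2: proving that the boundary cycle classes are proportional to $\chi(X_D)$ with universal constants. This requires an intersection computation on a toroidal compactification of the $(1,2)$ Hilbert modular surface, involving the first Chern classes of the two eigenline bundles. I would first try to identify $W_D(4)$ and $P_D$ as vanishing loci of natural sections of these line bundles. From that identification, the constants would follow by evaluating $\lambda_1\lambda_2$ against the volume form.
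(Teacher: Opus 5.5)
\textbf{There is a genuine gap: the decisive step is deferred, not proved.} Your outline has the right overall shape: each $c_k(D)$ is a ratio of boundary contributions, with $W_D(4)$ as the $k=1$ boundary. But every quantitative input is left open. Once the constants are written as in Theorem~\ref{th:Siegel:Veech}, the statement reduces to expressing three quantities as explicit multiples of $\chi(X'_D)$. (The paper imports that theorem from earlier work; your Step~1 would have to recover the weights $15$, $9$, $3$, $b_D$, which you leave as unspecified multiplicities.) The three quantities are:
\begin{itemize}
\item $\chi(W_D(4))$. This is M\"oller's theorem.
\item The genus-two term $\chi(W_D(2))+b_D\chi(W_{D/4}(2))$, or $2\chi(W_D(2))$ when $D\equiv 1\,[8]$. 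This needs Bainbridge's $\chi(W_D(2))=-\frac92\chi(X_D)$, M\"oller's ratio $\chi(X'_D)/\chi(X_D)\in\{1,\frac32\}$, and the ratios $\chi(X_D)/\chi(X_{D/4})\in\{6,10,8\}$ from Bainbridge's formula (Proposition~\ref{prop:conseq:Bainbridge}). You never say which lower discriminants occur or with what weights.
\item $\chi(W_D(0^3))$. This is the new content of the paper, and your description of it is off. For $k=3$, collapsing three homologous saddle connections gives triples of tori, i.e.\ $W_D(0^3)$. This is isomorphic to $Q'_D$, a double cover of the product locus $P'_D$ of the $(1,2)$-polarized surface $X'_D$. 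Likewise, for $k=2$ the boundary is a connected genus-two eigenform with a marked Weierstrass point, not a genus-two surface glued to tori.
\end{itemize}

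\textbf{Why your route to $\chi(P'_D)$ does not yet give a constant.} You only propose an intersection computation on a toroidal compactification, with $P'_D$ cut out by ``some natural section''. You name no section, weight or multiplicity, so no constant comes out.

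\textbf{What the paper does instead.} The missing idea is arithmetic.
\begin{itemize}
\item $\chi(Q'_D)=-\frac16\deg\pi_{1,D}$, and the degree counts prototypes $(a,b,d,e)$ with $D=e^2+8ad$, $0\le b<a$, $\gcd(a,b,d,e)=1$.
\item Summing over the conductor gives $-\frac16\sum_{e^2\equiv D\,[8]}\sigma_1(\frac{D-e^2}{8})$ (Proposition~\ref{prop:sum:eul:char:QDp}).
\item Bainbridge--Cohen express $\sum_{r\mid f}\chi(X_{r^2D_0})$ as $\frac1{30}\sum\sigma_1(\frac{D-e^2}{4})$.
\item The two divisor sums are compared through linear relations between the weight-$5/2$ forms $G_2(Nz)\theta(z)+\frac{1}{2N\pi\imath}\theta'(z)$. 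These are $f_1-5f_2+4f_4=0$ in $\Mb_{5/2}(\Gamma_0(4))$ and $h=\frac25 g$ in $\Mb_{5/2}(\Gamma_0(64))$ (Theorem~\ref{th:rel:sum:sigma:1}).
\item Induction on the conductor and the conversions to $\chi(X'_D)$ listed above finish the argument.
\end{itemize}

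\textbf{A separate problem with the denominator.} You claim $\vol(\Omega_1E_D(2,2)^\odd)$ is a constant times $\chi(X'_D)$ ``because the projectivized locus is that surface minus curves''. This is unsupported. The Prym map from bielliptic genus-three curves to $(1,2)$-polarized surfaces has one-dimensional fibres, so no such identification is automatic. You also give no constant. The paper never needs this volume, because in Theorem~\ref{th:Siegel:Veech} the denominator is itself a combination of the boundary terms.
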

The loci $\Omega E_D(2,2)^\odd$ are all contained in the locus  $\tilde{\Qc}(4,-1^4)$ of orienting double covers of quadratic differentials in the stratum $\Qc(4, -1^4)$.
Since the loci $\Omega E_D(2,2)^\odd$ are not contained in any proper invariant sub-orbifold of $\tilde{\Qcal}(4,-1^4)$ (see \cite{AN16})
it follows from the results of Eskin-Mirzakhani~\cite{EM18} and Eskin-Mirzakhani-Mohammadi~\cite{EMM13} that  as $D\to +\infty$, $c_k(D)$ converges to the corresponding Siegel-Veech constant of $\tilde{\Qc}(4,-1^4)$ that we denote by $\tilde{c}_k(4,-1^4)$. In Appendix~\ref{sec:SV:quad:cov}, we will give the proof that $\tilde{c}_1(4,-1^4)=25/9, \;, \tilde{c}_2(4,-1^4)= 3, \; \tilde{c}_3(4,-1^4)=2/9$ using the method of Eskin-Masur-Zorich~\cite{EMZ03}. 

\medskip

Hilbert modular surfaces are moduli spaces of Abelian surfaces admitting a real multiplication by some quadratic order  (see \textsection~\ref{sec:Hilbert:mod:surf} for more details). Let $X_D$ be the Hilbert  surface parametrizing principally polarized Abelian surfaces admitting a  multiplication by $\Oc_D$.  In this paper, we will be interested in Abelian surfaces with polarization of type $(1,2)$. The corresponding Hilbert surface  will be denoted by $X'_D$. Note that $X'_D=\varnothing$ if $D\equiv 5 \; [8]$ (see \cite{LN:components, Mo14}).

Let $P_D$ (resp. $P'_D$) denote the product locus of  $X_D$ (resp. of $X'_D$) that is the locus of Abelian surfaces $A \simeq E_1\times E_2$, where $E_1, E_2$ are elliptic curves (see \textsection\ref{subsec:prod:locus} for more details). 
The loci $P_D$ and $P'_D$ are finite covers of the modular curve $\Hbb/\SL(2,\Z)$. 
In \cite{Bai:GT}, Bainbridge showed that for all non-square discriminant $D >4$, we have
$$
\chi(P_D)=-\frac{5}{2}\cdot\chi(X_D)
$$
where $\chi(.)$ denotes the Euler characteristic. As a by product of the proof of Theorem \ref{th:SV:const}, we will show
\begin{Theorem}\label{th:eul:char:PD:p:XD:p}
	For all $D\in \N, D\equiv 0,1,4 \, [8]$, $D>9$ and $D$ is not a square, we have
	\begin{equation}\label{eq:eul:char:PD:p:XD:p}
		\chi(P'_D) =\left\{ 
		\begin{array}{ll}
			-\chi(X'_D) & \hbox{ if $D\equiv 1 \, [8]$} \\
			-\chi(X'_D)/2 & \hbox{ if $4 \, | \, D$}.
		\end{array} 
		\right.
	\end{equation}
	
\end{Theorem}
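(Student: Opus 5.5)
We deduce Theorem~\ref{th:eul:char:PD:p:XD:p} from the Siegel--Veech computation. The idea is to compute one of the constants, or a combination of them, in two independent ways. One way uses only $D$-independent data, namely the limiting value $\tilde{c}_k(4,-1^4)$ or a local computation. The other expresses the constant as a ratio of Euler characteristics on the Hilbert surface $X'_D$.

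\textbf{Step 1: Siegel--Veech constant as a ratio of volumes.} By the Siegel--Veech formula, combined with the standard Eskin--Masur--Zorich degeneration argument, $c_k(D)$ equals $\lim_{\epsilon\to0}\pi^{-1}\epsilon^{-2}$ times the relative volume of the $\epsilon$-thin part of $\Omega E_D(2,2)^\odd$, that is, the surfaces having a saddle connection of length $<\epsilon$ joining the two zeros.

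Collapsing such a saddle connection of multiplicity $k$ lands in a boundary stratum of the Prym locus. For multiplicity one, the zeros merge into a Prym eigenform in $\Omega E_D(4)^\odd$. The other multiplicities correspond to degenerations in which a pair of homologous saddle connections pinches off a torus or cylinder configuration. We will argue that the multiplicity-$2$ configuration forces the underlying Abelian surface, given by the Prym variety with its $(1,2)$-polarization, to split as a product $E_1\times E_2$. Its thin-part volume is then proportional to the Euler characteristic of the product locus $P'_D$, with a universal local factor computed in a neighbourhood of the boundary.

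The total volume of $\Omega E_D(2,2)^\odd$ is, as in Bainbridge's work, proportional to $\chi(X'_D)$. The reason is that the projectivized locus fibres over $X'_D$ with fibre $\Pb^1$-bundles coming from the choice of eigenform, and the Masur--Veech type volume equals a universal constant times $\chi$.

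\textbf{Step 2: equating.} This gives $c_2(D)=\kappa\cdot\chi(P'_D)/\chi(X'_D)$. Here $\kappa$ is a universal constant, except for the bookkeeping of the number of components and of orbifold degrees. Those depend on $D$ mod $8$: for $D\equiv1\,[8]$ there are two components of $\Omega E_D(2,2)^\odd$, while the Hilbert surface $X'_D$ is connected, or the covering degree of $P'_D\to X'_D$-strata changes by a factor $2$.

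Inserting $c_2(D)=3$ from Theorem~\ref{th:SV:const} then gives $\chi(P'_D)=-\chi(X'_D)$ or $-\chi(X'_D)/2$ according to the case. The sign comes from the fact that $\chi(X'_D)>0$ while $\chi(P'_D)<0$. Logically, one can proceed in either order. We can compute $c_2$ via the limit $\tilde c_2(4,-1^4)=3$ together with the fact that $c_2(D)$ is independent of $D$, which the proof of Theorem~\ref{th:SV:const} provides. Alternatively, we can get the Euler characteristic ratio from known formulas for $\chi(X'_D)$ (Möller) and count $P'_D$ directly, then check.

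\textbf{Main obstacle.} The hard step is the precise local analysis near the product locus. It has three parts: showing that the multiplicity-$2$ saddle connections correspond exactly to a neighbourhood of $P'_D$; computing the universal constant $\kappa$, which involves integrating the area of the collapsing configuration over the fibre; and tracking the orbifold factors and the component count that produce the factor $1/2$ when $4\mid D$. We would first do this computation in explicit prototype coordinates (the cylinder decompositions of \cite{LN:components}), checking it against a small discriminant such as $D=12$ or $17$ where both Euler characteristics are computable by hand.
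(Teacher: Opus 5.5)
\textbf{The argument is circular.} In the paper, Theorem~\ref{th:SV:const} is deduced \emph{from} Theorem~\ref{th:eul:char:PD:p:XD:p}. By Theorem~\ref{th:Siegel:Veech}, each $c_k(D)$ is an explicit ratio involving $\chi(W_D(4))$, $\chi(W_D(2))+b_D\chi(W_{D/4}(2))$ and $\chi(W_D(0^3))$. The first two are known multiples of $\chi(X'_D)$ (M\"oller, Bainbridge). The values $25/9,3,2/9$ therefore come out only once one shows that $\chi(W_D(0^3))=\chi(Q'_D)=2\chi(P'_D)$ equals $-\chi(X'_D)$, resp.\ $-2\chi(X'_D)$. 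So you may not insert $c_2(D)=3$.

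Your fallback does not repair this. Eskin--Mirzakhani--Mohammadi only give $c_k(D)\to\tilde c_k(4,-1^4)$ as $D\to\infty$. The $D$-independence of $c_k(D)$ you invoke has no independent source: it is exactly what the Euler characteristic identity supplies.

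The geometric picture is also off. The product locus comes from multiplicity-$3$ saddle connections: three homologous saddle connections collapse to a triple of tori, and $W_D(0^3)\simeq Q'_D$. Multiplicity $2$ leads instead to $W_D(2)$ and $W_{D/4}(2)$. Moreover, $\chi(W_D(0^3))$ enters the denominator of every $c_k(D)$. So a relation of the shape $c_2(D)=\kappa\,\chi(P'_D)/\chi(X'_D)$ is not what the boundary analysis produces.

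\textbf{What is missing is an exact, independent computation of $\chi(P'_D)$}, which the paper does arithmetically.
\begin{enumerate}
\item Since $Q'_D\to P'_D$ has degree $2$, it suffices to compute $\chi(Q'_D)$.
\item Counting the sublattices $\frac{\lambda}{2}\Lambda_2\subset\Lambda_1$ of index $(D-e^2)/8$ gives $\deg\pi_{1,D}=\#\Pc_D$. Hence, for $D=f^2D_0$ with $D_0$ $(1,2)$-primitive, $\sum_{r\mid f}\chi(Q'_{r^2D_0})=-\frac16\sum_{e^2\equiv D\,[8]}\sigma_1\bigl(\frac{D-e^2}{8}\bigr)$.
\item The key step is an identity of weight-$5/2$ modular forms: $f_1-5f_2+4f_4=0$ in $\Mb_{5/2}(\Gamma_0(4))$, and $h=\frac25 g$ in $\Mb_{5/2}(\Gamma_0(64))$ by a vanishing-order bound. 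This converts the sum above into Cohen's sums $\sum\sigma_1\bigl(\frac{D-e^2}{4}\bigr)$. By Cohen and Bainbridge, these are $30$ times sums of $\chi(X_{D'})$ over discriminants $D'$ with $D/D'$ a square.
\item Combine this with M\"oller's ratio $\chi(X'_D)/\chi(X_D)\in\{1,3/2\}$, the ratios $\chi(X_D)/\chi(X_{D/4})\in\{6,10,8\}$ from Bainbridge's formula, and induction on $f$ and on the power of $4$ in $D$. This yields both cases, including the factor $2$ separating $D\equiv 1\,[8]$ from $4\mid D$.
\end{enumerate}
Your remark about counting $P'_D$ directly and checking against M\"oller points in this direction. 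However, it omits precisely the modular-forms identity that makes the count comparable to $\chi(X'_D)$.
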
 
\subsection{Ideas of the proof}\label{subsec:strategy}
Let $\Omega E_D(2)$ and $\Omega E_D(4)$ denote the Prym eigenfom loci with discriminant $D$ in the strata $\Omega \Mcal_2(2)$ and $\Omega \Mcal_3(4)$ respectively. Each of these loci is a finite union of closed $\GL^+(2,\R)$-orbits  (see \cite{McM:prym}). Let $W_D(2)$ (resp. $W_D(4)$) denote the projectivization of $\Omega E_D(2)$ (resp. of $\Omega E_D(4)$), that is $W_D(2)\simeq \Omega E_D(2)/\C^*$ (resp. $W_D(4)\simeq \Omega E_D(4)/\C^*$). Note that $W_D(2)$ (resp. $W_D(4)$) is isomorphic to a finite union of Teichm\"uller curves.    

To our purpose, we will consider Abelian differentials on disconnected Riemann surfaces. In \cite{Ng:25} (see also \cite{LN:components}), we introduced the notion of Prym eigenforms on triples of tori. The space of triples of tori that are Prym eigenforms for the quadratic order $\Oc_D$ is denoted by $\Omega E_D(0^3)$, and its projectivization  $\Omega E_D(0^3)/\C^*$  by $W_D(0^3)$. It can be shown that $W_D(0^3)$ is a (non-connected) finite cover of the modular curve $\Hbb/\SL(2,\Z)$. The following result was proved in \cite{Ng:25}.
\begin{Theorem}\label{th:Siegel:Veech}
	Let $D \equiv 0, 1, 4 \;  [8], \, D >9$, be a non-square discriminant. 	
	\begin{itemize} 
		\item[$\bullet$] If $4 \, | \, D$,  then we have
		\begin{align*}
			c_1(D) & = \frac{15\chi(W_D(4))}{\chi(W_D(2))+b_D \chi(W_{D/4}(2))+9\chi(W_D(0^3))}\\
			c_2(D) &= \frac{9\left(\chi(W_{D}(2)) + b_D\chi(W_{D/4}(2)) \right)}{\chi(W_D(2))+b_D \chi(W_{D/4}(2))+9\chi(W_D(0^3))} \\
			c_3(D) & = \frac{3\chi(W_D(0^3))}{\chi(W_D(2))+b_D \chi(W_{D/4}(2))+9\chi(W_D(0^3))}
		\end{align*}
		with
		$$
		b_D= \left\{
		\begin{array}{ll}
			0 & \text{ if } D/4 \equiv 2,3 \, [4] \\
			4 & \text{ if } D/4 \equiv 0 \, [4] \\
			3 & \text{ if } D/4 \equiv 1 \, [8] \\
			5 & \text{ if } D/4 \equiv 5 \, [8]. \\
		\end{array}
		\right.
		$$
		\item[$\bullet$] If $D\equiv 1 \; [8]$, then
		\begin{align*}
		c_1(D) & = \frac{15\chi(W_{D}(4))}{2\chi(W_D(2))+9\chi(W_D(0^3))}\\
		c_2(D) &= \frac{18\chi(W_{D}(2))}{2\chi(W_{D}(2))+9\chi(W_D(0^3))} \\
		c_3(D) & = \frac{3\chi(W_{D}(0^3))}{2\chi(W_{D}(2))+9\chi(W_D(0^3))}.
		\end{align*}
	\end{itemize}	
\end{Theorem}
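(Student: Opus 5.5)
The plan is to follow the Eskin--Masur--Zorich approach: express each $c_k(D)$ as the volume of a thin neighbourhood of a boundary stratum divided by the total volume of the locus. The first step is to fix the natural $\GL^+(2,\R)$-invariant volume on $\Omega E_D(2,2)^\odd$ coming from period coordinates, namely the relative periods of the $(-1)$-eigenspace of the Prym involution. We then use the Siegel--Veech formula
$$
c_k(D)=\lim_{\varepsilon\to 0}\frac{\vol\bigl(\Omega_1^{\varepsilon,k}\bigr)}{\pi\varepsilon^2\,\vol\bigl(\Omega_1 E_D(2,2)^\odd\bigr)},
$$
where $\Omega_1^{\varepsilon,k}$ is the set of unit-area surfaces carrying a saddle connection of multiplicity $k$ joining the two zeros with length $<\varepsilon$.

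For each $k$ we identify the surfaces obtained by collapsing the short saddle connections and reverse this by a local surgery.
\begin{itemize}
\item[$\bullet$] For $k=1$, collapsing merges the two double zeros into a zero of order $4$, so the limit lies in $\Omega E_D(4)$; the inverse operation is ``breaking up a zero''.
\item[$\bullet$] For $k=2$, the two homologous saddle connections are exchanged by the Prym involution. Cutting along them and collapsing leaves a genus-two surface in $\Omega E_D(2)$, or in $\Omega E_{D/4}(2)$ when the lattice condition allows it, glued to a cylinder or torus part.
\item[$\bullet$] For $k=3$, the three homologous saddle connections cut the surface into three tori, giving a point of $\Omega E_D(0^3)$.
\end{itemize}
In each case we check that the surgery map is locally a product: a disc of radius $\varepsilon$ in the parameter of the short vector, times the boundary locus. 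We also count the combinatorial multiplicities: the number of ways to break up the zero, the number of admissible slit positions, and the factors $b_D$, which count the prototypes of discriminant $D$ arising from $\Oc_{D/4}$-eigenforms according to $D/4 \bmod 8$. These counts produce the constants $15$, $9$ (with $b_D$), and $3$ in the numerators.

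Next we use that each boundary locus is a finite union of closed $\GL^+(2,\R)$-orbits, or a finite cover of $\Hbb/\SL(2,\Z)$ in the case of $W_D(0^3)$. For such a locus, the volume of the unit hyperboloid equals a universal constant times the orbifold Euler characteristic of its projectivization, by Gauss--Bonnet for $\Hbb/\Gamma$. Thus every numerator becomes a universal constant times $\chi(W_D(4))$, $\chi(W_D(2))+b_D\chi(W_{D/4}(2))$, or $\chi(W_D(0^3))$. The case $D\equiv 1\,[8]$ is handled the same way, except that only one genus-two family contributes, but with a factor $2$ coming from the two spin or component choices.

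The main obstacle is the denominator, $\vol(\Omega_1 E_D(2,2)^\odd)$, which is not a Teichmüller-curve volume. The formulas predict the linear relation $c_2(D)+3c_3(D)=9$, respectively $c_2+3c_3=18\cdot\chi(W_D(2))/(\dots)+\dots$ in the odd case. I would try first to prove such a relation directly. One route is an independent Siegel--Veech-type identity on the locus: the Siegel--Veech constants of cylinders bounded by these saddle connections, computed via the quotient quadratic differential in $\Qc(4,-1^4)$. Another route is a Gauss--Bonnet/Euler-characteristic computation of the total volume in the spirit of Bainbridge, summing the boundary contributions of the horizontal cylinder decompositions. Once this relation is available, the total volume is expressed as the stated combination of Euler characteristics and the three formulas follow.
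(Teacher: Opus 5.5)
The paper does not prove this theorem; it is quoted from an earlier paper. So the question is whether your plan would establish it. It would not: the step you flag as ``the main obstacle'' is the actual content of the theorem, and you leave it open. The numerators are the routine part: Eskin--Masur--Zorich thin-part volumes, the local product structure near each boundary stratum, and unit-area volumes of closed $\GL^+(2,\R)$-orbits being proportional to Euler characteristics. What the statement really asserts is that, in the \emph{same} normalization, $\vol(\Omega_1E_D(2,2)^\odd)$ is proportional to $\chi(W_D(2))+b_D\chi(W_{D/4}(2))+9\chi(W_D(0^3))$ (resp.\ $2\chi(W_D(2))+9\chi(W_D(0^3))$). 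Note that $\chi(W_D(4))$ is absent there. So the total volume must be recovered from the multiplicity-two and multiplicity-three degenerations alone, and you give no mechanism producing such an identity.

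Moreover, the relation you set out to prove is false. In both cases the stated formulas give $c_2/9+3c_3=1$, i.e.\ $c_2+27c_3=9$. With the values of Theorem~\ref{th:SV:const} this reads $3+6=9$, whereas $c_2+3c_3=11/3$. Your first suggested route is unavailable. $\Omega E_D(2,2)^\odd$ is a proper invariant suborbifold of $\tilde{\Qc}(4,-1^4)$, so its Siegel--Veech constants cannot be read off from the stratum $\Qc(4,-1^4)$. Their coincidence with $\tilde c_k(4,-1^4)$ is exactly what the paper deduces \emph{from} this theorem, so the route is circular. The second route (Gauss--Bonnet over cylinder decompositions) is only named.

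The numerator constants are also asserted rather than derived, and your $k=2$ picture is inaccurate. Collapsing a pair $\{s,\tau s\}$ does not split off a torus or cylinder. As in the appendix, the limit is a connected $(Y,\eta)\in\Omega\Mcal_2(2)$ with a marked Weierstrass point $w$ distinct from the zero, and $\tau$ becomes the hyperelliptic involution. Since $H_1(X,\Z)^-$ maps onto an index-two sublattice of $H_1(Y,\Z)$ (the $\varphi(\alpha_1)/2$ in the appendix), whether $(Y,\eta)$ lands in $\Omega E_D(2)$ or in $\Omega E_{D/4}(2)$ depends on this lattice data. The coefficients $1$, $2$ and $b_D$ have to come from counting admissible marked points $w$, together with the lattice-index factors in the induced measures (compare the factors $4$ and $16$ in the appendix). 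Calling these prototype counts, or attributing the $2$ for $D\equiv 1\,[8]$ to spin choices, does not replace that computation.
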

 For any $D \in \N$, $D >9$ non-square and $D\equiv 0,1,4 \, [8]$, by a result of M\"oller~\cite{Mo14}, we have that
 $$
 \chi(W_D(4))=\left\{
 \begin{array}{cl}
 {\displaystyle -\frac{5}{2} \cdot\chi(X'_D)} & \hbox{ if $D \equiv 0 \, [4]$} \\
 {\displaystyle -5\cdot\chi(X'_D)} & \hbox{ if $D \equiv 1 \, [8]$}.	 
 \end{array}
 \right.
 $$
 It was shown by Bainbridge in \cite{Bai:GT} that we have 
 $$
 \chi(W_D(2))=-\frac{9}{2}\cdot\chi(X_D).
 $$
 In the same paper, Bainbridge also gave an explicit formula computing $\chi(X_D)$ generalizing a classical result of Siegel.  In \cite{Mo14}, M\"oller proved an explicit formula relating  $\chi(X'_D)$ to $\chi(X_D)$ (cf. Proposition~\ref{prop:eul:char:XD:p}). Combining these results,  we will show that 
 $$
 \chi(W_D(2)) +b_D\chi(W_{D/4}(2))= -\frac{9}{2}\chi(X'_D).
 $$  
To prove Theorem~\ref{th:SV:const}, it remains to show that we have
$$
\chi(W_D(0^3))= \left\{ 
\begin{array}{rl}
	-\chi(X'_D) & \hbox{ if $D\equiv 0 \, [4]$} \\
	-2\chi(X'_D) & \hbox{ if $D \equiv 1 \, [8]$}
\end{array} 
\right.
$$
This is achieved by relating $\chi(W_D(0^3))$ to  coefficients of the Fourier expansion of some modular forms. 

\subsection*{Acknowledgement} The author is grateful to Aurel Page for the very helpful discussions.

\section{Consequences of relations of modular forms}\label{sec:conseq:rel:mod:forms}
Recall that for any positive integers $m,n$,  
$$
\sigma_m(n):=\sum_{d \, | \, n } d^m.
$$
By convention $\sigma_m(0)=\frac{1}{2}\cdot\zeta_\Q(-m)$ and $\sigma_m(n)=0$ if $n<0$. Our goal in this section is to show the following
\begin{Theorem}\label{th:rel:sum:sigma:1}
Let $D \in \N$ be a non-square discriminant. 
\begin{itemize}
	\item[(i)] If $D \equiv 0 \, [4]$ then we have 
\begin{equation}\label{eq:rel:sum:sigma:1:D:4}
\sum_{\substack{e^2 \equiv D \, [8]}}\sigma_1(\frac{D-e^2}{8}) = \frac{1}{5}\cdot \sum_{\substack{e \; {\rm even}}}\sigma_1(\frac{D-e^2}{4}) +\frac{4}{5}\cdot \sum_{\substack{e^2 \equiv D/4 \, [4]}}\sigma_1(\frac{D/4-e^2}{4}).
\end{equation} 

\item[(ii)] If $D \equiv 1 \, [8]$ then we have 
\begin{equation}\label{eq:rel:sum:sigma:1:D:1:8}
	\sum_{\substack{e \; \odd}}\sigma_1(\frac{D-e^2}{8}) = \frac{2}{5}\cdot \sum_{\substack{e \;  \odd}}\sigma_1(\frac{D-e^2}{4}). 
\end{equation} 
\end{itemize}
\end{Theorem}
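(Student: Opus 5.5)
The plan is to read each side of \eqref{eq:rel:sum:sigma:1:D:4} and \eqref{eq:rel:sum:sigma:1:D:1:8} as the $D$-th Fourier coefficient of a product of a theta series with a (quasi-)modular weight-$2$ Eisenstein series. The identities then become identities between modular forms of weight $5/2$, which can be checked on finitely many coefficients. Put $q=e^{2\pi i z}$, $\theta(z)=\sum_{e\in\Z}q^{e^2}$ and
$$F(z)=\sum_{n\ge 0}\sigma_1(n)q^n=-\tfrac{1}{24}E_2(z).$$
With the convention $\sigma_1(0)=\frac12\zeta_\Q(-1)=-\frac1{24}$, the constant terms are automatically correct. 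Then:
\begin{itemize}
\item $\sum_{e^2\equiv D\,[8]}\sigma_1(\frac{D-e^2}{8})$ is the coefficient of $q^D$ in $\theta(z)F(8z)$;
\item $\sum_{e\ {\rm even}}\sigma_1(\frac{D-e^2}{4})$ is the coefficient of $q^D$ in $\theta(4z)F(4z)$;
\item $\sum_{e^2\equiv D/4\,[4]}\sigma_1(\frac{D/4-e^2}{4})$ is the coefficient of $q^D$ in $\theta(4z)F(16z)$;
\item for odd $e$ in (ii), replace $\theta(z)$ by $\theta(z)-\theta(4z)=\sum_{e\ \odd}q^{e^2}$.
\end{itemize}
For (i) we also apply the projection to exponents $\equiv 0 \, [4]$ (a combination of twists, or $U_4V_4$), and for (ii) to exponents $\equiv 1\,[8]$. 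This keeps us inside modular forms of level $\Gamma_0(N)$ with $N$ a power of $2$, at most $64$ or $128$.

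The only obstruction to modularity is the quasi-modular anomaly of $E_2$. $E_2^*(z)=E_2(z)-\frac{3}{\pi y}$ is modular of weight $2$, and $E_2^*(Mz)$ has anomaly $-\frac{3}{\pi M y}$. So I would form the difference
$$G=\pi\big(\theta(z)F(8z)\big)-\tfrac15\,\pi\big(\theta(4z)F(4z)\big)-\tfrac45\,\pi\big(\theta(4z)F(16z)\big)$$
(with $\pi$ the projection above) using $F^*$ in place of $F$. This $G$ is a genuine nonholomorphic modular form of weight $5/2$. Its nonholomorphic part is a multiple of $y^{-1}$ times a theta series, so it only contributes to Fourier coefficients at square exponents, in the manner of Zagier's nonholomorphic Eisenstein series for Hurwitz class numbers. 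This is exactly why the statement is restricted to non-square $D$. I would therefore first choose the coefficients $\frac15,\frac45$ (resp. $\frac25$) so that this anomaly cancels; checking that it does is part of the verification. I would remove the remaining square-supported contribution by subtracting the corresponding combination of derivatives of theta series, or by applying holomorphic projection.

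After this, $G$ becomes a holomorphic modular form $\tilde G\in M_{5/2}(\Gamma_0(N))$ whose non-square coefficients agree with the difference of the two sides. It remains to show $\tilde G=0$ up to square-supported terms. Since $M_{5/2}(\Gamma_0(N))$ is finite-dimensional, I would verify vanishing by computing the first few coefficients up to the Sturm bound $\frac{5}{24}[\SL(2,\Z):\Gamma_0(N)]$, a finite check of at most a couple of hundred coefficients. An alternative is to decompose into Cohen--Eisenstein series plus cusp forms and show each component matches. Part (ii) is handled identically with the odd theta series and the projection to $D\equiv1\,[8]$.

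The main obstacle is the bookkeeping of the $E_2$ anomaly together with the square exponents. One has to make sure that the nonholomorphic correction, after projection, really lives only on square exponents. One also has to track the level precisely enough that the Sturm-bound check is genuinely finite and conclusive. If the anomaly turns out not to cancel for the stated coefficients, I would instead express each sum through Cohen's generalized class numbers $H(2,\cdot)$, which are values of $L(-1,\chi_D)$-type, and compare those directly.
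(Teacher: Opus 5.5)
Your framework is the paper's. The sums are, up to terms supported on squares, Fourier coefficients of products $\theta(z)G_2(Mz)$ (your $F$ is the paper's $G_2$). The $E_2$-anomaly is repaired with $\theta'$, and finite-dimensionality in weight $5/2$ finishes the argument. But the step you make everything hinge on is false: with the stated coefficients the $y^{-1}$-parts do not cancel. Since $F^*(Mz)=F(Mz)+\frac{1}{8\pi M y}$, in (i) you get
\[
\Bigl(\tfrac{1}{64}-\tfrac15\cdot\tfrac{1}{32}-\tfrac45\cdot\tfrac{1}{128}\Bigr)\frac{\theta(4z)}{\pi y}=\frac{\theta(4z)}{320\,\pi y}\neq 0 .
\]
In (ii) you get $\bigl(\frac{1}{64}-\frac25\cdot\frac{1}{32}\bigr)\frac{\theta(z)-\theta(4z)}{\pi y}=\frac{\theta(z)-\theta(4z)}{320\,\pi y}\neq 0$. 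This is unavoidable, because both identities are false at square $D$. At $D=16$ the two sides of (i) are $\frac{35}{12}$ and $\frac{223}{60}$; at $D=9$ those of (ii) are $\frac{23}{12}$ and $\frac{71}{30}$; in general they differ by $D/20$. That discrepancy is exactly what a nonzero anomaly produces. So your correct remark that the non-square hypothesis comes from the anomaly contradicts your plan to choose the coefficients so that it vanishes. By your own decision rule you would then abandon the method for the Cohen class-number route, which you have not worked out.

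The missing point is that no cancellation between the terms is needed, because each product is completed on its own. The form $f_M(z):=G_2(Mz)\theta(z)+\frac{1}{2M\pi\imath}\theta'(z)$ is already holomorphic modular of weight $5/2$: the $\theta'$-term absorbs the anomaly $-\frac{c(cz+d)}{4M\pi\imath}$ of $G_2(Mz)$. Moreover $\frac{1}{2M\pi\imath}\theta'=\frac{1}{M}\sum_e e^2q^{e^2}$ only touches square exponents. This is your ``subtract derivatives of theta series'', applied term by term rather than to a leftover after cancellation.

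With this, (i) needs no Sturm computation at level $32$ or $64$. The projection of $\theta(z)F(8z)$ onto exponents $\equiv 0\,[4]$ is simply $\theta(4z)F(8z)$. Put $\phi_M(w)=\theta(w)F(Mw)=f_M(w)-\frac{1}{2M\pi\imath}\theta'(w)$. Your three series are then $\phi_2(4z)$, $\phi_1(4z)$, $\phi_4(4z)$, and (i) is the relation $f_1-5f_2+4f_4=0$ in the two-dimensional space $\Mb_{5/2}(\Gamma_0(4))$, read at index $D/4$. The paper phrases this as $a_1(D/4)=a_4(D)$ and $a_2(D/4)=a_8(D)$.

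For (ii) the paper does essentially what you propose, but with the holomorphic $f_M$. It projects $f_4(z)-f_1(4z)$ onto exponents $\equiv 1\,[8]$ via the mod-$8$ twist, giving $g$. It compares $g$ with $h=f_8(z)-f_2(4z)$ in $\Mb_{5/2}(\Gamma_0(64))$. It concludes $h=\frac25 g$ by applying the valence formula to $(h-\frac25 g)^4\in\Mb_{10}(\Gamma_0(64))$.
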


\begin{Remark}\label{rk:rel:sum:sigma:1:D:4}
	In the cases $D/4 \equiv 2,3 \, [4]$, $e^2 \not\equiv D/4 \; [4]$ for any integer $e$. Thus \eqref{eq:rel:sum:sigma:1:D:4} becomes
	$$
	\sum_{\substack{e^2 \equiv D \, [8]}}\sigma_1(\frac{D-e^2}{8}) = \frac{1}{5}\cdot \sum_{\substack{e \; {\rm even}}}\sigma_1(\frac{D-e^2}{4}).
	$$
\end{Remark}

\subsection{Basic definitions and properties of some modular functions}\label{subsec:basic:mod:forms}
We first recall some classical notation and conventions.
\begin{enumerate}
	\item For all integers $a,b$, with $b \neq 0$, ${\displaystyle \left( \frac{a}{b}\right)}$ is the Kronecker symbol which satisfies the following properties (see \cite{CO76})
	\begin{itemize}
		\item Given $a$, the function $b \mapsto \left(\frac{a}{b}\right)$ is completely multiplicative.
		
		\item ${\displaystyle \left( \frac{a}{-1}\right)=-1}$ if  $c < 0$ and ${\displaystyle \left( \frac{a}{-1}\right)=1}$ if  $c \geq  0$.
		
		\item   ${\displaystyle \left( \frac{a}{2}\right)=(-1)^{(a^2-1)/8}}$ if $a$ is odd.
		
		\item ${\displaystyle \left( \frac{a}{p}\right)}$ is the Legendre symbol if $p$ is an odd prime, and $\gcd(a,p)=1$.
		
		\item ${\displaystyle \left( \frac{a}{b}\right)=0}$ if $\gcd(a,b) \neq 1$.
	\end{itemize}

	\item For all integer $N\geq 1$, 
	$$
	\Gamma_0(N)=\{\left(\begin{array}{cc}  a & b \\ c & d \end{array}\right) \in \SL(2, \Z), \; c\equiv 0 \, [N]\}.
	$$ 
	\item For all $k\in \frac{1}{2}\cdot\Z$ and $N\in \N, N>0$, $\Mb_k(\Gamma_0(N))$ is the space of integral modular forms of weight $k$ and level $N$. Elements of  $\Mb_k(\Gamma_0(N))$ are functions $f: \Hbb \to \C$, where $\Hbb:=\{z\in \C, \; \Im(z) >0\}$, satisfying the followings 
	\begin{itemize}
		\item[a)] $f$ is holomorphic on $\Hbb$,
		
		\item[b)] If $k\in \Z$, then and for all $z\in \Hbb$ and for all $\gamma=\left(\begin{array}{cc} a & b \\ c & d \end{array}\right) \in \Gamma_0(N)$, we have
		$$
		f(\gamma\cdot z):= f(\frac{az+b}{cz+d}) = (cz+d)^{k}f(z) \quad \hbox{ for $k\in \Z$,}
		$$
		and
		$$
		f(\gamma\cdot z) = \left(\frac{c}{d}\right)\cdot\left(\frac{-4}{d}\right)^{-k}(cz+d)^{k}f(z) \quad \hbox{for $k \in \frac{1}{2}+\Z$}.
		$$
		\item[c)] $f$ is holomorphic at the cusps of $\Hbb/\Gamma_0(N)$.
	\end{itemize}
	
	\item For all $z\in \Hbb$,
	$$
	G_2(z)=\frac{-1}{24}+\sum_{n=1}\sigma_1(n)q^n
	$$
	where $q=e^{2\pi\imath z}$. 
	The function $G_2$ satisfies the following property: for all $\gamma=\left(\begin{smallmatrix} a & b \\ c & d \end{smallmatrix}\right) \in \SL(2,\Z)$, we have
	$$
	G_2(\gamma\cdot z)=(cz+d)^2G_2(z)-\frac{c(cz+d)}{4\pi\imath}.
	$$
	
	\item For all $z\in \Hbb$,
	$$
	\theta(z)=\sum_{n\in\Z}q^{n^2}=1+2\sum_{n=1}^\infty q^{n^2}.
	$$
	For all $\gamma=\left(\begin{smallmatrix} a & b \\ c & d \end{smallmatrix}\right) \in \Gamma_0(4)$, we have
	$$
	\theta(\gamma\cdot z)=\left(\frac{c}{d} \right)\left(\frac{-4}{d}\right)^{-1/2}(cz+d)^{1/2}\theta(z).
	$$
	As a consequence
	$$
	\theta'(\gamma\cdot z)=\left(\frac{c}{d} \right)\left(\frac{-4}{d}\right)^{-1/2}\left( \frac{c}{2}(cz+d)^{3/2}\theta(z) +(cz+d)^{5/2}\theta'(z)\right).
	$$	
\end{enumerate}

\subsection{Relations of modular forms}\label{subsec:rel:mod:forms}
Define
\[
f_{1}(z) := G_2(z)\theta(z)+\frac{1}{2\pi\imath}\theta'(z), \quad  
	f_{2}(z):= G_2(2z)\theta(z)+\frac{1}{4\pi\imath}\theta'(z),  \quad 
	f_{4}(z):= G_2(4z)\theta(z)+\frac{1}{8\pi\imath}\theta'(z) 
\]

\begin{Remark}\label{rk:f:4:1:by:Cohen}
	The function $f_{4}$ has been introduced in \cite{C75}.
\end{Remark}
Let
\[
f_{1}(z) =\sum_{n=0}^\infty a_{1}(n)q^n, \quad 
f_{2}(z) =\sum_{n=0}^\infty a_{2}(n)q^n, \quad 
f_{4}(z) =\sum_{n=0}^\infty a_{4}(n)q^n 
\]
be the Fourier expansions of $f_{1}, f_{2}, f_{4}$ respectively. 
The following lemma follows immediately from the definition of $f_{1}, f_{2},f_{4}$. 
\begin{Lemma}\label{lm:coeff:Fourier:f:m:n}
Let $D$ be a positive integer. If $D$ is not a square then we have
\begin{align*}
	a_{1}(D) & =\sigma_1(D)+2\sum_{1\leq e  < \sqrt{D}}\sigma_1(D-e^2)= \sum_{e\in\Z}\sigma_1(D-e^2),\\
	a_{2}(D) & = \sum_{e\equiv D \, [2]}\sigma_1(\frac{D-e^2}{2}),\\
	a_{4}(D) & = \sum_{e^2\equiv D \, [4]}\sigma_1(\frac{D-e^2}{4}).
\end{align*}
In particular we have $a_{4}(D)=0$ if $D\equiv 2,3 \, [4]$.

If $D=d^2$, $d\in\N$, then we have
\begin{align*}
	a_{1}(d^2) &= \sum_{e \in \Z} \sigma_1(d^2-e^2) +2d^2,\\
	a_{2}(d^2) &= \sum_{e \equiv d \, [2]} \sigma_1(\frac{d^2-e^2}{2}) +d^2,\\
	a_{4}(d^2) &= \sum_{e \equiv d \, [2]} \sigma_1(\frac{d^2-e^2}{4}) + \frac{d^2}{2}.\\
\end{align*}
\end{Lemma}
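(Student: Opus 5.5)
The plan is to compute the coefficient of $q^D$ in each product directly. We expand $G_2(mz)\theta(z)$ as a Cauchy product of two $q$-series and add the contribution of the derivative term $\frac{1}{2m\pi\imath}\theta'(z)$, for $m=1,2,4$.

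\textbf{Step 1: the product term.} By the conventions $\sigma_1(0)=\frac{1}{2}\zeta_\Q(-1)=-\frac{1}{24}$ and $\sigma_1(n)=0$ for $n<0$, we can write
$$
G_2(mz)=\sum_{k\in\Z}\sigma_1(k)\,q^{mk}.
$$
We also write $\theta(z)=\sum_{e\in\Z}q^{e^2}$. The coefficient of $q^D$ in $G_2(mz)\theta(z)$ is then the sum of $\sigma_1(k)$ over all pairs $(k,e)$ with $mk+e^2=D$. Equivalently, it is the sum of $\sigma_1\!\left(\frac{D-e^2}{m}\right)$ over those $e\in\Z$ with $e^2\equiv D\,[m]$. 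The sum is finite because terms with $e^2>D$ vanish. Specialising gives the three cases:
\begin{itemize}
\item For $m=1$ there is no congruence condition.
\item For $m=2$ the condition $e^2\equiv D\,[2]$ is the same as $e\equiv D\,[2]$.
\item For $m=4$ we keep the condition $e^2\equiv D\,[4]$.
\end{itemize}
Since squares are $0$ or $1$ mod $4$, the last sum is empty when $D\equiv 2,3\,[4]$, so $a_4(D)=0$ in that case. For $m=1$, grouping $e$ with $-e$ gives the stated form $\sigma_1(D)+2\sum_{1\le e<\sqrt D}\sigma_1(D-e^2)$ when $D$ is not a square.

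\textbf{Step 2: the derivative term.} Differentiating termwise gives $\theta'(z)=\sum_{n\in\Z}2\pi\imath\, n^2 q^{n^2}$, hence
$$
\frac{1}{2m\pi\imath}\theta'(z)=\frac{1}{m}\sum_{n\in\Z}n^2q^{n^2}.
$$
Its coefficient of $q^D$ is $0$ if $D$ is not a square. If $D=d^2$ with $d\ge 1$, the two terms $n=\pm d$ contribute $\frac{2d^2}{m}$, which equals $2d^2$, $d^2$ and $\frac{d^2}{2}$ for $m=1,2,4$ respectively.

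\textbf{Step 3: assembling the cases.} Adding the two contributions gives the formulas for non-square $D$ immediately. In the square case $D=d^2$, the terms $e=\pm d$ in the product sum carry $\sigma_1(0)=-\frac{1}{24}$ and are kept inside the stated sums. For $m=4$ we also use that $e^2\equiv d^2\,[4]$ holds exactly when $e\equiv d\,[2]$, which converts the congruence condition into the stated one. This yields the three formulas for $a_m(d^2)$.

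The only point requiring care is the bookkeeping of the conventions $\sigma_1(0)$ and $\sigma_1(n<0)$ in the square case. Apart from that, the argument is a routine coefficient comparison with no real obstacle.
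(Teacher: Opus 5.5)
Your proposal is correct and is the same direct coefficient computation the paper means when it says the lemma ``follows immediately from the definition'' of $f_1,f_2,f_4$. You expand $G_2(mz)\theta(z)$ as a Cauchy product using $\sigma_1(0)=-\frac{1}{24}$ and $\sigma_1(n)=0$ for $n<0$, add the contribution $\frac{2d^2}{m}$ of $\frac{1}{2m\pi\imath}\theta'$ at $q^{d^2}$, and handle the congruence conditions correctly, including $e^2\equiv d^2\,[4]\iff e\equiv d\,[2]$.
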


\begin{Proposition}\label{prop:rel:f:1:2:4}
	The functions $f_{1}, f_{2}, f_{4}$ all belong to the space $\Mb_{5/2}(\Gamma_0(4))$ of integral modular forms of weight $5/2$ with respect to $\Gamma_0(4)$. More over, we have the following relation
	\begin{equation}\label{eq:rel:in:M:5:2}
		f_{1}-5f_{2}+4f_{4}=0.
	\end{equation}
	As a consequence, we have
	\begin{equation}\label{eq:rel:coeffs:in:Gam:0:4}
		a_{1}(n)-5a_{2}(n)+4a_{4}(n)=0,
	\end{equation}
	for all $n\in\N$. 
\end{Proposition}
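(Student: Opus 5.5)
The plan is to verify modularity of each $f_m$ directly from the transformation laws recalled in \S\ref{subsec:basic:mod:forms}, and then to get the linear relation from the finite dimensionality of $\Mb_{5/2}(\Gamma_0(4))$.

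\textbf{Modularity.} Fix $m\in\{1,2,4\}$ and $\gamma=\left(\begin{smallmatrix} a & b\\ c & d\end{smallmatrix}\right)\in\Gamma_0(4)$. Since $4\mid c$, the matrix $\gamma_m:=\left(\begin{smallmatrix} a & mb\\ c/m & d\end{smallmatrix}\right)$ lies in $\SL(2,\Z)$ and satisfies $m(\gamma\cdot z)=\gamma_m\cdot(mz)$. The transformation law of $G_2$ then gives
$$
G_2(m\gamma z)=(cz+d)^2G_2(mz)-\frac{(c/m)(cz+d)}{4\pi\imath}.
$$
Multiplying by the theta law and adding $\frac{1}{2m\pi\imath}\theta'(\gamma z)$, the non-modular terms are
$$
-\frac{c}{4m\pi\imath}(cz+d)^{3/2}\theta(z)
$$
from the $G_2$ factor, and
$$
\frac{1}{2m\pi\imath}\cdot\frac{c}{2}(cz+d)^{3/2}\theta(z)
$$
from $\theta'$. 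These cancel exactly, up to the common factor $\left(\frac{c}{d}\right)\left(\frac{-4}{d}\right)^{-1/2}$. This is precisely why the coefficient $\frac{1}{2m\pi\imath}$ was chosen. Hence $f_m(\gamma z)=\left(\frac{c}{d}\right)\left(\frac{-4}{d}\right)^{-5/2}(cz+d)^{5/2}f_m(z)$, using $\left(\frac{-4}{d}\right)^{-2}=1$.

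Holomorphy on $\Hbb$ is clear. For the cusps of $\Gamma_0(4)$ (namely $\infty$, $0$, $1/2$), note that $f_m$ is built from $\theta$, $\theta'$ and $G_2(mz)$, each of which has at most polynomial growth in $1/\Im z$ near every cusp; the $q$-expansion at $\infty$ visibly has no negative powers. Rather than compute the expansions at $0$ and $1/2$ explicitly, I would apply the standard criterion: a function with the weight-$5/2$ transformation law and moderate growth $O(\Im(z)^{-A})$ at cusps is holomorphic at the cusps, since its expansion at each cusp can then have no negative-power terms. Alternatively, one can use $\theta(-1/4z)=\sqrt{-2\imath z}\,\theta(z)$ together with the $S$-transformation of $G_2$ to check the cusp $0$ directly, and the cusp $1/2$ similarly.

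\textbf{The relation.} It is classical that $\Mb_{5/2}(\Gamma_0(4))$ has dimension $2$ (Cohen; Kohnen), spanned for example by $\theta^5$ and $\theta\cdot F^{\,}$-type forms. Concretely, its elements are determined by their first two Fourier coefficients $q^0,q^1$; this follows from the Sturm bound, which for weight $5/2$ and index $[\SL(2,\Z):\Gamma_0(4)]=6$ is $\lfloor 5/2\cdot 6/12\rfloor=1$. So it suffices to check that $g:=f_1-5f_2+4f_4$ has vanishing coefficients of $q^0$ and $q^1$. Using $\sigma_1(0)=-\tfrac1{24}$, the constant terms are $-\tfrac1{24}$ for each $f_m$, so the constant term of $g$ is $-\tfrac1{24}(1-5+4)=0$.

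For $q^1$, the formulas of Lemma~\ref{lm:coeff:Fourier:f:m:n} with $d=1$ give:
\begin{itemize}
\item $a_1(1)=\sigma_1(1)+2\sigma_1(0)+2=3-\tfrac1{12}$;
\item $a_2(1)=2\sigma_1(0)+1=1-\tfrac1{12}$;
\item $a_4(1)=2\sigma_1(0)+\tfrac12=\tfrac12-\tfrac1{12}$.
\end{itemize}
Then $3-5+2=0$, and the $-\tfrac1{12}$ terms cancel as before. Hence $g=0$, and \eqref{eq:rel:coeffs:in:Gam:0:4} follows by comparing coefficients.

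\textbf{Main obstacle.} The delicate part is holomorphy at the cusps $0$ and $1/2$, equivalently justifying the Sturm-bound argument for half-integral weight. I would handle it by the growth argument, or by squaring: $g^2\in\Mb_5(\Gamma_0(4),\chi_{-4})$ has integral weight, and applying Sturm's bound there, with the first three coefficients of $g^2$ vanishing, gives $g^2=0$.
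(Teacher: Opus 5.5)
Your proposal is correct and follows essentially the same route as the paper: you get modularity from the transformation laws of $G_2$ and $\theta$ (the two non-modular terms cancel exactly as you compute), and then you read the relation off the $q^0$ and $q^1$ coefficients. The paper instead invokes $\dim \Mb_{5/2}(\Gamma_0(4))=2$ to guarantee that some relation exists and solves for its coefficients, while you check the given combination $f_1-5f_2+4f_4$ directly via a Sturm/valence bound (made rigorous by passing to $g^2$); this is slightly more self-contained and also spells out the holomorphy at the cusps that the paper leaves implicit.
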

\begin{proof}
	That $f_{1}, f_{2}, f_{4}$ belong to $\Mb_{5/2}(\Gamma_0(4))$ follows immediately from the properties of $G_2$ and $\theta$. \\ 
	Since $\dim \Mb_{5/2}(\Gamma_0(4))=2$, there must be a linear relation between $f_{1}, f_{2}$, and $f_{4}$. Plugging the values of $a_{1}(n), a_{2}(n), a_{4}(n)$ for some $n$ allows us to calculate explicitly the coefficients of this relation.	
\end{proof}

The following lemma follows from elementary arguments.
\begin{Lemma}\label{lm:coeffs:f:1:n:f:4}
	For all $n\in\N, n \geq 1$, we have
	$$
	a_{1}(n)=a_{4}(4n).
	$$
	As a consequence
	$$
	g_1(z):=f_{4}(z)-f_{1}(4z) = \sum_{n \equiv 1 \, [4], \, n >0}a_{4}(n)q^n \in \Mb_{5/2}(\Gamma_0(16)).
	$$
\end{Lemma}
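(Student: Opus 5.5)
We verify $a_{1}(n)=a_{4}(4n)$ directly from the formulas of Lemma~\ref{lm:coeff:Fourier:f:m:n}, handling square and non-square arguments together, and then deduce the statement about $g_1$.

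\emph{Non-square case.} Here $a_{4}(4n)=\sum_{e^2\equiv 4n \, [4]}\sigma_1\bigl(\frac{4n-e^2}{4}\bigr)$. The congruence $e^2\equiv 0 \, [4]$ forces $e$ to be even, so we write $e=2e'$ with $e'\in\Z$. The summand then becomes $\sigma_1(n-e'^2)$, and as $e$ runs over the even integers, $e'$ runs over all of $\Z$. This gives $a_{4}(4n)=\sum_{e'\in\Z}\sigma_1(n-e'^2)=a_{1}(n)$. Note that $4n$ is a square exactly when $n$ is, so this case is consistent.

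\emph{Square case.} If $n=d^2$, then $4n=(2d)^2$, and the formula gives
$$
a_{4}(4d^2)=\sum_{e\equiv 0 \, [2]}\sigma_1\Bigl(\frac{4d^2-e^2}{4}\Bigr)+\frac{(2d)^2}{2}.
$$
The same substitution $e=2e'$ turns this into $\sum_{e'\in\Z}\sigma_1(d^2-e'^2)+2d^2$, which is exactly $a_{1}(d^2)$. The extra terms match because the constant-term correction from $G_2$ scales correctly.

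More conceptually, one could argue at the level of $q$-series. Since $\theta(z)=\sum_e q^{e^2}$, the even-$e$ part of $\theta$ is $\theta(4z)$. The derivative term $\frac{1}{8\pi\imath}\theta'$ contributes $\frac{1}{4}e^2 q^{e^2}$, which on even $e=2e'$ becomes $e'^2q^{4e'^2}$. This matches the term $\frac{1}{2\pi\imath}\theta'$ evaluated at $4z$. So the sub-series of $f_{4}$ supported on exponents $\equiv 0 \, [4]$ equals $f_{1}(4z)$, apart from the constant term, which one checks separately. The odd-$e$ terms contribute only to exponents $\equiv 1 \, [4]$.

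\emph{Consequence for $g_1$.} By Lemma~\ref{lm:coeff:Fourier:f:m:n}, $a_{4}(m)=0$ for $m\equiv 2,3 \, [4]$. The coefficients of $f_{4}(z)-f_{1}(4z)$ at exponents $4n$ with $n\geq 1$ vanish by the identity just proved. The constant terms also cancel: both equal $-\frac{1}{24}$, since $\theta$ has constant term $1$ and $\theta'$ has none. Hence only the exponents $n\equiv 1 \, [4]$ survive, which gives the stated expansion of $g_1$.

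\emph{Modularity.} $f_{1}\in \Mb_{5/2}(\Gamma_0(4))$ by Proposition~\ref{prop:rel:f:1:2:4}. The standard fact that $f\mapsto f(4z)$ maps $\Mb_k(\Gamma_0(N))$ into $\Mb_k(\Gamma_0(4N))$ then shows $f_{1}(4z)\in\Mb_{5/2}(\Gamma_0(16))$. For half-integral weight one must check that the theta multiplier is preserved under this map, which holds because $4$ is a square: conjugate by $\mathrm{diag}(4,1)$, noting that $\left(\frac{4c}{d}\right)=\left(\frac{c}{d}\right)$ for odd $d$. Also $\Mb_{5/2}(\Gamma_0(4))\subset\Mb_{5/2}(\Gamma_0(16))$.

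The only delicate point is the multiplier check for $f(4z)$ in half-integral weight, which we expect to be routine.
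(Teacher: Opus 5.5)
Your proof is correct and is the elementary verification the paper has in mind; the paper gives no details beyond calling the lemma elementary. You substitute $e=2e'$ in the coefficient formulas (equivalently, split $\theta$ into its even part $\theta(4z)$ and its odd part), check that the constant terms agree, and invoke the standard fact that $f(z)\mapsto f(4z)$ maps $\Mb_{5/2}(\Gamma_0(4))$ into $\Mb_{5/2}(\Gamma_0(16))$. One minor misattribution, which does not affect the argument: in the square case the extra terms $\frac{(2d)^2}{2}=2d^2$ come from the $\theta'$ terms, not from the constant term of $G_2$, which is already absorbed in the convention $\sigma_1(0)=-\frac{1}{24}$.
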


%

Let $\psi: (\Z/8\Z)^*\simeq (\Z/2\Z)^2 \to \{\pm 1\}$ be a character such that $\psi(5)=-1$. 
The conductor of $\psi$ is $8$, and  $\psi^2$ is trivial.
Define
$$
g_2(z)=g_{1\psi}(z) :=\sum_{n\equiv 1 \, [8], \, n >0} a_{4}(n)q^n - \sum_{n\equiv 5 \, [8], \, n >0} a_{4}(n)q^n.
$$
Then we have $g_2\in \Mb_{5/2}(\Gamma_0(64))$ (cf. \cite[Lem. 4.3.10]{Miyake}). Define
$$
g:=\frac{g_1+g_2}{2}=\sum_{n\equiv 1 \, [8], \, n>0} a_{4}(n)q^n  \in \Mb_{5/2}(\Gamma_0(64)).
$$

Let
$$
f_{8}(z):=G_2(8z)\theta(z)+\frac{1}{16\pi\imath}\theta'(z) =  \sum_{n=0}^\infty a_{8}(n)q^n.
$$
The following lemmas are elementary
\begin{Lemma}\label{lm:coeffs:f:8:1}
	We have $f_{8}\in \Mb_{5/2}(\Gamma_0(8))$. 
	Let $D$ be a positive integer. If $D$ is not a square then
	$$
	a_{8}(D)=\sum_{e^2 \equiv D \, [8]}\sigma_1(\frac{D-e^2}{8}).
	$$
	In particular, we have $a_{8}(n) \neq 0$ only if $n\equiv 0, 1, 4 \, [8]$. 
	
	If $D=d^2$, then we have
	$$
	a_{8}(d^2)=\sum_{e^2\equiv d^2 \, [8]}\sigma_1(\frac{d^2-e^2}{8})+\frac{d^2}{4}
	$$
\end{Lemma}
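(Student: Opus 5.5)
The lemma has two parts: the modularity of $f_8$ on $\Gamma_0(8)$, and the Fourier coefficients $a_8(n)$. We prove them separately.

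\emph{Modularity.} We follow the argument for $f_1,f_2,f_4$ in Proposition~\ref{prop:rel:f:1:2:4}. Let $\gamma=\left(\begin{smallmatrix} a & b \\ c & d \end{smallmatrix}\right)\in\Gamma_0(8)$. Since $8\mid c$, the matrix $\gamma'=\left(\begin{smallmatrix} a & 8b \\ c/8 & d \end{smallmatrix}\right)$ lies in $\SL(2,\Z)$, and $8(\gamma\cdot z)=\gamma'\cdot(8z)$. The transformation law of $G_2$ then gives
$$
G_2(8\gamma\cdot z)=(cz+d)^2G_2(8z)-\frac{(c/8)(cz+d)}{4\pi\imath}.
$$
Because $\Gamma_0(8)\subset\Gamma_0(4)$, the stated transformation laws of $\theta$ and $\theta'$ also apply. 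Multiplying out $G_2(8\gamma z)\theta(\gamma z)$, we obtain the factor $\left(\frac{c}{d}\right)\left(\frac{-4}{d}\right)^{-1/2}$ times
$$
(cz+d)^{5/2}G_2(8z)\theta(z)-\frac{c}{32\pi\imath}(cz+d)^{3/2}\theta(z).
$$
The term $\frac{1}{16\pi\imath}\theta'(\gamma z)$ contributes $\frac{1}{16\pi\imath}\cdot\frac{c}{2}(cz+d)^{3/2}\theta(z)=\frac{c}{32\pi\imath}(cz+d)^{3/2}\theta(z)$, which cancels the anomalous term exactly. What remains is the weight $5/2$ law for $f_8$.

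Holomorphy on $\Hbb$ is clear. For holomorphy at the cusps we use the same reasoning: at each cusp, $f_8$ is a finite combination of $q$-expansions with at most polynomial growth in $1/\Im$. Equivalently, $f_8$ can be written as $G^*_2(8z)\theta+\frac{1}{16\pi\imath}(\theta'-\dots)$ using the non-holomorphic $G^*_2$, which is modular. The non-holomorphic corrections cancel, leaving something of moderate growth, and hence holomorphic at the cusps. This cusp check is the only step that needs any care. It is routine, and it is handled exactly as for $f_4$.

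\emph{Coefficients.} Write
$$
G_2(8z)=-\tfrac{1}{24}+\sum_{m\ge1}\sigma_1(m)q^{8m},\qquad \theta(z)=\sum_{e\in\Z}q^{e^2},
$$
and note that $\frac{1}{16\pi\imath}\theta'(z)=\frac{1}{8}\sum_{e\in\Z} e^2q^{e^2}$, since $\frac{d}{dz}q^{n}=2\pi\imath\, n\, q^n$.

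The coefficient of $q^D$ in $G_2(8z)\theta(z)$ is $\sum_{e}\sigma_1\!\left(\frac{D-e^2}{8}\right)$, summed over $e$ with $8\mid D-e^2$, where the term with $D=e^2$ is $\sigma_1(0)=-1/24$. If $D$ is not a square, no such term occurs and $\theta'$ contributes nothing, which gives the stated formula. Since squares are congruent to $0,1,4$ mod $8$, we get $a_8(n)=0$ unless $n\equiv0,1,4\,[8]$.

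If $D=d^2$ with $d\ge1$, the two values $e=\pm d$ contribute $2\cdot(-\frac1{24})$ from the $G_2$ part and $2\cdot\frac{d^2}{8}=\frac{d^2}{4}$ from the $\theta'$ part. Following the paper's convention, $\sigma_1(0)$ is absorbed into the sum $\sum_{e^2\equiv d^2[8]}\sigma_1(\frac{d^2-e^2}{8})$. The extra term is therefore exactly $d^2/4$, which matches the analogous formulas in Lemma~\ref{lm:coeff:Fourier:f:m:n}.
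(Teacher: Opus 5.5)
Your proposal is correct and follows the route the paper has in mind; the paper gives no proof, only calling the lemma elementary, in the same way that modularity of $f_1,f_2,f_4$ is said to ``follow immediately from the properties of $G_2$ and $\theta$''. You check the weight-$5/2$ law via $\gamma'=\left(\begin{smallmatrix} a & 8b\\ c/8 & d\end{smallmatrix}\right)$, where the two $\frac{c}{32\pi\imath}(cz+d)^{3/2}\theta$ terms cancel, and you read off the coefficients from $\frac{1}{16\pi\imath}\theta'=\frac18\sum_e e^2q^{e^2}$ together with the convention $\sigma_1(0)=-\frac1{24}$. Your cusp-holomorphy paragraph is looser than the rest, and its phrase ``the non-holomorphic corrections cancel'' is not quite accurate. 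The argument that works is this: $f_8=G_2^*(8z)\theta+\frac18\delta_{1/2}\theta$ is a sum of real-analytic weight-$5/2$ forms of moderate growth at every cusp, and a holomorphic function with the weight-$5/2$ law and moderate growth at the cusps is holomorphic there.
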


\begin{Lemma}\label{lm:coeffs:f:8:1:n:f:2:1}
	For all $n\in \N$, we have $a_{2}(n)=a_{8}(4n)$.
	As a consequence
	$$
	h(z):=f_{8}(z)-f_{2}(4z)=\sum_{n \equiv 1 \, [8], n>0}a_{8}(n)q^n \in \Mb_{5/2}(\Gamma_0(16)).
	$$
\end{Lemma}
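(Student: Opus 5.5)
The statement to prove is Lemma~\ref{lm:coeffs:f:8:1:n:f:2:1}: $a_{2}(n)=a_{8}(4n)$ for all $n$, and $h=f_8-f_2(4z)$ is supported on exponents $n\equiv 1\,[8]$ and lies in $\Mb_{5/2}(\Gamma_0(16))$.

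\emph{The coefficient identity.} First write down the Fourier coefficients of $f_8$ and $f_2$ for every $n$, squares included. We have $G_2(8z)\theta(z)=\sum_{m,e}\sigma_1(m)q^{8m+e^2}$, with $\sigma_1(0)=-1/24$. Also $\frac{1}{16\pi\imath}\theta'(z)=\frac{1}{8}\sum_e e^2q^{e^2}$, since $\frac{1}{2\pi\imath}\frac{d}{dz}q^{k}=kq^{k}$. Hence
$$
a_8(N)=\sum_{e^2\equiv N\,[8]}\sigma_1\Big(\frac{N-e^2}{8}\Big)+\frac{1}{8}\sum_{e^2=N}e^2 .
$$
In the same way,
$$
a_2(n)=\sum_{e\equiv n\,[2]}\sigma_1\Big(\frac{n-e^2}{2}\Big)+\frac{1}{2}\sum_{e^2=n}e^2 .
$$
Now take $N=4n$. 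The condition $e^2\equiv 4n\,[8]$ forces $e$ even, say $e=2e'$, and then it becomes $4e'^2\equiv 4n\,[8]$, i.e. $e'^2\equiv n\,[2]$, i.e. $e'\equiv n\,[2]$. The argument of $\sigma_1$ is $(4n-4e'^2)/8=(n-e'^2)/2$. This gives a bijection between the two sums, term by term. The correction terms also match: $\frac18\sum_{e^2=4n}e^2=\frac18\sum_{e'^2=n}4e'^2=\frac12\sum_{e'^2=n}e'^2$. This proves $a_2(n)=a_8(4n)$. The case $n=0$ also holds, since both constant terms equal $-1/24$.

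\emph{Support of $h$.} The Fourier expansion of $f_2(4z)$ is $\sum_n a_2(n)q^{4n}$. Subtracting it from $f_8$ cancels every coefficient at exponents divisible by $4$. What remains is $\sum_{N\not\equiv 0\,[4]}a_8(N)q^N$. From the formula for $a_8(N)$, the coefficient is nonzero only if $N\equiv e^2\,[8]$ for some $e$, or $N$ is itself a square. Squares mod $8$ are $0,1,4$, so among $N\not\equiv 0\,[4]$ only $N\equiv 1\,[8]$ survives.

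\emph{Modularity.} By Lemma~\ref{lm:coeffs:f:8:1}, $f_8\in\Mb_{5/2}(\Gamma_0(8))\subset\Mb_{5/2}(\Gamma_0(16))$. By Proposition~\ref{prop:rel:f:1:2:4}, $f_2\in\Mb_{5/2}(\Gamma_0(4))$. The standard fact that $f(z)\mapsto f(4z)$ maps $\Mb_{k}(\Gamma_0(4))$ into $\Mb_k(\Gamma_0(16))$ then gives $f_2(4z)\in\Mb_{5/2}(\Gamma_0(16))$. For half-integral weight this must be checked with the theta multiplier, using $\theta(4z)$-type conjugation: for $\gamma=\left(\begin{smallmatrix}a&b\\c&d\end{smallmatrix}\right)\in\Gamma_0(16)$, set $\gamma'=\left(\begin{smallmatrix}a&4b\\c/4&d\end{smallmatrix}\right)\in\Gamma_0(4)$. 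The multiplier $\left(\frac{c/4}{d}\right)=\left(\frac{c}{d}\right)$ is unchanged because $d$ is odd and $\left(\frac{4}{d}\right)=1$. Since $h$ is a difference of two elements of $\Mb_{5/2}(\Gamma_0(16))$, it lies in that space.

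\emph{Where the difficulty lies.} The main point needing care is exactly this multiplier check for the rescaling $z\mapsto 4z$ in half-integral weight. The coefficient bookkeeping is routine.
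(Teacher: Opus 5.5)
Your proposal is correct and is exactly the elementary argument the paper has in mind; the paper states this lemma without proof. The substitution $e=2e'$ matches the explicit formulas for $a_8(4n)$ and $a_2(n)$, including the $\theta'$ correction terms and the constant term. The congruences $e^2\equiv 0,1,4\,[8]$ leave only the exponents $n\equiv 1\,[8]$. The rescaling $z\mapsto 4z$, via $\gamma'=\left(\begin{smallmatrix}a&4b\\c/4&d\end{smallmatrix}\right)$, preserves the theta multiplier because $\left(\frac{4}{d}\right)=1$ for odd $d$. You only leave holomorphy of $f_2(4z)$ at the cusps of $\Gamma_0(16)$ implicit, but it is part of the same standard fact and causes no difficulty.
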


\begin{Proposition}\label{prop:rel:coeff:f:8:n:f:4}
	We have  
	\begin{equation}\label{eq:rel:mod:forms:g:h}
	h = \frac{2}{5}g \in \Mb_{5/2}(\Gamma_0(64)).
    \end{equation}
	As a consequence
	\begin{equation}\label{eq:rel:a:8:n:a:4}
		a_{8}(n)=\frac{2}{5} a_{4}(n)
	\end{equation}
	for all $n >0, n\equiv 1 \, [8]$.
\end{Proposition}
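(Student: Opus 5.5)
The plan is the standard finite-dimensionality argument. Both $h$ and $g$ lie in $\Mb_{5/2}(\Gamma_0(64))$: $h$ by Lemma~\ref{lm:coeffs:f:8:1:n:f:2:1}, since $\Mb_{5/2}(\Gamma_0(16))\subset \Mb_{5/2}(\Gamma_0(64))$, and $g$ by construction from $g_1$ and its twist $g_2$. So $F:=h-\frac{2}{5}g$ is an element of this finite-dimensional space. Its Fourier expansion is supported on $n\equiv 1\,[8]$, $n>0$, where it has coefficients $a_8(n)-\frac25 a_4(n)$. It therefore suffices to show that $F$ vanishes to high enough order at the cusp $\infty$.

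First I would compute the relevant dimension bound. I will use the Sturm bound for half-integral weight forms: if $f\in \Mb_k(\Gamma_0(N))$ and $\mathrm{ord}_\infty f > \frac{k}{12}[\SL(2,\Z):\Gamma_0(N)]$, then $f=0$. This can be proved by applying the integral-weight Sturm bound to $f\cdot\theta$, or to $f^4$ or $f^2$ with the appropriate character. Here $[\SL(2,\Z):\Gamma_0(64)]=64\cdot\frac32=96$, so the bound is $\frac{5/2}{12}\cdot 96=20$. Thus it suffices to check that the coefficients of $F$ vanish for all $n\le 20$. Because $F$ is supported on $n\equiv1\,[8]$, this reduces to the finitely many values $n=1,9,17$ (and perhaps $n=25$ to be safe).

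Next I would compute these coefficients explicitly from Lemmas~\ref{lm:coeff:Fourier:f:m:n} and~\ref{lm:coeffs:f:8:1}, remembering the square correction terms at $n=1,9,25$.
\begin{itemize}
\item For $n=1$: $a_4(1)=\sigma_1(0)\cdot 2+\frac12$, where the $e=\pm1$ terms each contribute $\sigma_1(0)=-\frac1{24}$, giving $-\frac1{12}+\frac12=\frac5{12}$. Similarly $a_8(1)=-\frac1{12}+\frac14=\frac16$, and $\frac25\cdot\frac5{12}=\frac16$, as required.
\item For $n=9$: $a_4(9)=2\sigma_1(2)+2\sigma_1(0)+\frac92=6-\frac1{12}+\frac92$. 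On the other side, $a_8(9)$ comes from $e=\pm1$, giving $2\sigma_1(1)=2$, and from $e=\pm3$, giving $-\frac1{12}+\frac94$, so $a_8(9)=2-\frac1{12}+\frac94=\frac{25}{6}$. Also $\frac25\left(\frac{125}{12}\right)=\frac{25}{6}$, which checks.
\item For $n=17$ the check is analogous: $a_4(17)=2(\sigma_1(4)+\sigma_1(2))=2(7+3)=20$, and $a_8(17)=2(\sigma_1(2)+\sigma_1(1))=8=\frac25\cdot 20$.
\end{itemize}

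Once $F=0$ is established, the coefficient identity \eqref{eq:rel:a:8:n:a:4} follows by comparing Fourier coefficients. The main obstacle is justifying the Sturm bound in half-integral weight with the theta multiplier, and making sure that $g_2$ really lies in level $64$ with the trivial character, up to the theta multiplier, so that $F$ lives in a single space. I would handle the first by squaring: $F^2$ is in $\Mb_5(\Gamma_0(64),\chi_{-4})$, where the integral-weight Sturm bound $\frac{5}{12}\cdot 96=40$ applies to $F^2$, and this is equivalent to requiring order $>20$ for $F$. For the second I would appeal to the twisting lemma \cite[Lem.~4.3.10]{Miyake} as the paper already does, noting that $\psi^2$ is trivial so the character is unchanged.
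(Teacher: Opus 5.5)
Your proposal is correct and is essentially the paper's argument: both show that $h-\frac25 g$ vanishes to high order at $\infty$, then pass to a power to reach integral weight and apply a valence/Sturm bound. The paper uses $(h-\frac25 g)^4\in\Mb_{10}(\Gamma_0(64))$ with the bound $80$ (from $g(X_0(64))=3$, $12$ cusps, no elliptic points) and checks coefficients up to $n=25$, whereas you use $F^2\in\Mb_5(\Gamma_0(64),\chi_{-4})$ with Sturm bound $40$ and check $n=1,9,17$; both suffice, and your coefficient computations are right.
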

\begin{proof}
It follows from \cite[Th. 4.2.7]{Miyake} that the curve $X_0(64):=\Hbb/\Gamma_0(64)$ has no elliptic points and $12$ cusps. We have 
$$
[\ol{\Gamma}(1):\ol{\Gamma}_0(64)]= [\Gamma(1):\Gamma_0(64)]= 64(1+\frac{1}{2})=96.
$$
Therefore $g(X_0(64))=3$ (cf. \cite[Th. 4.2.11]{Miyake}).
We consider $f:=(h-2/5\cdot g)^4$. By definition, $f$ belongs to $\Mb_{10}(\Gamma_0(64))$. Let $f(z)=\sum_{n\in \N} b(n)q^n$ be the Fourier expansion of  $f$. One readily checks that $a_8(n)=2/5\cdot a_4(n)$ for all $n \equiv 1 \; [8]$ and $n\leq 25$. Therefore, $b(n)=0$ for all $n\leq 100$.
Since $f \in \Mb_{10}(\Gamma_0(64))$, if $f\not\equiv  0$ then $f$ cannot vanish to the order $5(2\times 3- 2)+5\times 12 =80$ at $\infty$ (cf. \cite[Cor. 2.3.4]{Miyake}). Thus we can conclude that $f \equiv 0$ and $h=2/5 \cdot g$.
\end{proof}
\subsection{Proof of Theorem~\ref{th:rel:sum:sigma:1}}
\begin{proof}
	Let  $D \in \N$ be a non-square discriminant.
	Suppose that $D \equiv 0 \, [4]$. We then have 
	\begin{align*}
		a_1(D/4) & =\sum_{e \in \Z} \sigma_1(D/4-e^2) = \sum_{e\in \Z}\sigma_1(\frac{D-4e^2}{4})=\sum_{e \, {\rm even}}\sigma_1(\frac{D-e^2}{4})= a_4(D)\\
		a_2(D/4) & = \sum_{e \equiv D/4 \, [2]}\sigma_1(\frac{D/4-e^2}{2}) = \sum_{e \equiv D/4 \; [2]}\sigma(\frac{D- 4e^2}{8}) = \sum_{ e^2 \equiv D \, [8]} \sigma_1(\frac{D-e^2}{8})=a_8(D).
	\end{align*}
Thus \eqref{eq:rel:sum:sigma:1:D:4} follows from Proposition~\ref{prop:rel:f:1:2:4}.

If $D \equiv 1 \, [8]$ we obtain \eqref{eq:rel:sum:sigma:1:D:1:8} from Proposition~\ref{prop:rel:coeff:f:8:n:f:4}.
\end{proof}

\section{Real multiplication and Hilbert modular surfaces}\label{sec:Hilbert:mod:surf}
\subsection{Hilbert modular surfaces}\label{subsec:Hilbert:surf}
A real quadratic order is a ring  isomorphic to $\Z[x]/(x^2+bx+c)$, with $b,c\in \Z$ such that $D:=b^2-4c >0$. The number $D$ is called the discriminant of the order. 
A quadratic order is determined up to isomorphism  by its discriminant. For all $D \in \N, D\equiv 0,1 \; [4]$, we will denote by $\Oc_D$ the real quadratic order of discriminant $D$.

Let $A$ be an Abelian surface. We say that $A$ admits a real multiplication by $\Oc_D$ if there exists a faithful ring morphism $\rho: \Oc_D \to \End(A)$ such that
\begin{itemize}
	\item the image of $\rho$ consists of self-adjoint endomorphisms with respect to the polarization of $A$.
	
	\item $\rho$ is proper, meaning that if $f \in \End(A)$, and for some $n \in \Z\setminus\{0\}$, we have $nf\in \rho(\Oc_D)$, then $f \in \rho(\Oc_D)$.
\end{itemize}

For principally polarized Abelian surfaces and $D$ non-square, the set of pairs $(A,\rho)$ as above is parametrized by the Hilbert modular surface 
$$
X_D \simeq \Hbb\times\Hbb/\SL(\Oc_D\oplus\Oc^\vee_D),
$$ 
where $\Oc_D^\vee=\frac{1}{\sqrt{D}}\cdot \Oc_D$ is the {\em inverse different} of $\Oc_D$, and $\SL(\Oc_D\oplus\Oc_D^{\vee})$ is the subgroup of $\SL(2,\Q(\sqrt{D}))$ that preserves the lattice $\Oc_D\oplus\Oc_D^\vee$ (see \cite[\textsection 2]{Bai:GT} or \cite[\textsection 2]{McM:fol} for more details). Actually, we have
$$
\SL(\Oc_D\oplus \Oc_D^\vee) = \{\left(\begin{array}{cc} a & b \\ c & d \end{array} \right), \; a,d \in \Oc_D, \; b \in\left( \Oc_D^\vee\right)^{-1}, \; c \in \Oc_D^\vee, \; ad-bc=1 \}
$$
where $\left(\Oc_D^\vee\right)^{-1} = \sqrt{D}\cdot \Oc_D$. It can also be shown that $X_D\simeq \Hbb\times(-\Hbb)/\SL(2,\Oc_D)$.

Let $z_i, \; i=1,2$, be the complex coordinate on the two copies of $\Hbb$ in  $\Hbb\times\Hbb$. Let $x_i=\Re(z_i), \; y_i=\Im(z_i)$. Then the $2$-form 
\begin{equation}\label{eq:vol:form:Hyp}
\eta_i:=\frac{1}{2\pi}\cdot\frac{dx_i\wedge dy_i}{y_i^2}.
\end{equation}
descends to a $2$-form on $X_D$.
The Euler characteristic of $X_D$ can be computed by 
$$
\chi(X_D)=\int_{X_D}\eta_1\wedge \eta_2.
$$
A discriminant $D\in \N$ which cannot be written as $D=f^2D'$, where $f\in \N, \; f>1$, and $D'$ is also a discriminant (that is $D'\equiv 0,1 [4]$) is called  {\em fundamental}. For all fundamental discriminant $D>1$, Siegel \cite{Sie36} proved that
$$
\chi(X_D)=2\zeta_{K_D}(-1)
$$ 
where $\zeta_{K_D}$ is the Dedekind zeta function of the quadratic field $K_D:=\Q(\sqrt{D})$. 
In \cite{Bai:GT}, Bainbridge generalized the Siegel's result to all discriminants. 
\begin{Theorem}[Bainbridge]\label{th:Bainb:E:char:XD}
	Let $D_0>1$ be a fundamental discriminant. For all $f\in \N, f \geq 1$,  we have
	\begin{equation}\label{eq:eul:char:XD}
		\chi(X_{f^2D_0})=2\cdot f^3\cdot \zeta_{K_{D_0}}(-1)\cdot\sum_{r \, | \, f} \left( \frac{D_0}{r}\right)\frac{\mu(r)}{r^2} 
	\end{equation} 
	where $\left(\frac{*}{*}\right)$ is the Kronecker symbol and $\mu(.)$ is the M\"obius function.
\end{Theorem}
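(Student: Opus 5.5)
This is Bainbridge's theorem, which the paper cites rather than reproves. If I were to prove it, I would go through the volume computation: $\chi(X_D)$ is the covolume of $\SL(\Oc_D\oplus\Oc_D^\vee)$ acting on $\Hbb\times\Hbb$, normalized by $\eta_1\wedge\eta_2$. The plan is to treat the fundamental case first and then pass to the order $\Oc_{f^2D_0}$ by an index computation.

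For $f=1$ I take Siegel's theorem as the base case: $\chi(X_{D_0})=2\zeta_{K_{D_0}}(-1)$. Since $\SL(\Oc_D\oplus\Oc_D^\vee)$ is conjugate in $\SL(2,K)$ to a group commensurable with $\SL(2,\Oc_D)$, I will work with $\Gamma_f:=\SL(\Oc_{f^2D_0}\oplus\Oc_{f^2D_0}^\vee)$. I will compare it with $\Gamma_1:=\SL(\Oc_{D_0}\oplus\Oc_{D_0}^\vee)$ through an intermediate group $\Gamma_1\cap\Gamma_f$, after conjugating so that both preserve lattices in a common $K^2$.

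Concretely, I would realize $\Gamma_f$ as the stabilizer of the lattice $L_f=\Oc_f\oplus\Oc_f^\vee$, where $\Oc_f=\Z+f\Oc_{D_0}$. The key claim is that
$$\vol(\Gamma_f\backslash\Hbb^2)/\vol(\Gamma_1\backslash\Hbb^2)=[\Gamma_1:\Gamma_1\cap\Gamma_f]/[\Gamma_f:\Gamma_1\cap\Gamma_f].$$
Both indices would be computed locally at primes $p\mid f$, using strong approximation for $\SL_2$ over $K$. Locally, $\Oc_f\otimes\Z_p=\Z_p+p^{v}\Oc_{D_0,p}$. The volume ratio is then a product over $p\mid f$ of a local factor depending only on the splitting type of $p$ in $K$: split, inert or ramified, i.e. $\left(\frac{D_0}{p}\right)=1,-1,0$.

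For each prime I would compute the local factor as a ratio of Haar measures of the two local stabilizers, namely the unit groups $\SL_2$ of the orders $\End(L_{f,p})$ and $\End(L_{1,p})$ inside $M_2(K_p)$. Counting points mod $p^v$, I expect the factor $p^{3v}\bigl(1-\left(\frac{D_0}{p}\right)p^{-2}\bigr)$. The reason is that $|\SL_2(\Oc_{D_0}/p)|$ and $|\SL_2(\Oc_f/p\Oc_f)|$ differ exactly by the factor $1-\chi(p)p^{-2}$ over $1-p^{-2}$ type terms. Multiplying over $p\mid f$ gives $f^3\prod_{p\mid f}(1-\left(\frac{D_0}{p}\right)p^{-2})=f^3\sum_{r\mid f}\left(\frac{D_0}{r}\right)\mu(r)r^{-2}$, which is \eqref{eq:eul:char:XD}.

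The main obstacle is the local index computation for non-maximal orders, especially at $p=2$ and at ramified primes. There the group is not $\SL_2$ of a product of local fields, and the quotient $\Oc_f/p\Oc_f$ is a non-reduced ring $\Z/p[\epsilon]$. My first attempt would be to use the exact sequence $1\to K_p\to\SL_2(\Oc_{f,p})\to\SL_2(\Oc_f/p)$ together with the same filtration for $\Oc_{D_0,p}$, and compare the layer sizes by Hensel lifting. An alternative check is to compute $\chi$ via the Hirzebruch–Zagier formula $\chi(X_D)$ proportional to $\sum_e\sigma_1((D-e^2)/4)$, i.e. $a_4(D)$. The $f$-dependence would then follow from the Hecke multiplicativity of the Cohen Eisenstein series $f_4$ of Remark~\ref{rk:f:4:1:by:Cohen}, whose $D$-th coefficient is known to be proportional to $L(-1,\chi_{D_0})\sum_{r\mid f}\mu(r)\left(\frac{D_0}{r}\right)r\sigma_3(f/r)$-type expressions. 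I would need to reconcile that expression with the product form above.
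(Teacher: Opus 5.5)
\textbf{Verdict: the strategy is right, but there is a gap at the decisive step, and the proposed alternative check is wrong.}

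Your outline is correct: Siegel's formula for $f=1$, then multiply by an index whose $p$-part, for $p^v\,\|\,f$, is $p^{3v}\bigl(1-\left(\frac{D_0}{p}\right)p^{-2}\bigr)$. The local factor you predict is also correct. The paper gives no proof of its own; it quotes Bainbridge, and the standard argument is exactly such an index computation on top of Siegel.

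\textbf{The gap.} Your proposal stops at the step that \emph{is} the proof. The factor is only ``expected''. Your comparison of $|\SL_2(\Oc_{D_0}/p)|$ with $|\SL_2(\Oc_f/p\Oc_f)|$ accounts only for $1-\left(\frac{D_0}{p}\right)p^{-2}$. The power $p^{3v}$ would still need a comparison of congruence kernels. And you leave $p=2$ and ramified $p$ open as ``the main obstacle''. Your route could be pushed through, but that obstacle comes from working with two different lattices $\Oc_f\oplus\Oc_f^\vee$ and $\Oc_{D_0}\oplus\Oc_{D_0}^\vee$, their intersection, strong approximation and local Haar measures.

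\textbf{The missing idea is the conductor.}
\begin{itemize}
\item Conjugating by $\mathrm{diag}(1,\sqrt D)$ gives $X_D\simeq \Hbb\times(-\Hbb)/\SL(2,\Oc_D)$, as the paper notes. Hence $\SL_2(\Oc_D)\subset\SL_2(\Oc_{D_0})$, and $\chi(X_D)=[\SL_2(\Oc_{D_0}):\SL_2(\Oc_D)]\,\chi(X_{D_0})$, since $-I$ lies in both groups.
\item Since $f\Oc_{D_0}\subset\Oc_D=\Z+f\Oc_{D_0}$ and $\Oc_D/f\Oc_{D_0}\cong\Z/f\Z$, the group $\SL_2(\Oc_D)$ is exactly the preimage of $\SL_2(\Z/f\Z)$ under reduction $\SL_2(\Oc_{D_0})\to\SL_2(\Oc_{D_0}/f\Oc_{D_0})$.
\item This reduction is surjective, because $\SL_2$ of a finite ring is generated by elementary matrices, and these lift.
\item So the index is $|\SL_2(\Oc_{D_0}/f\Oc_{D_0})|/|\SL_2(\Z/f\Z)|$. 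Use the Chinese remainder theorem and $|\SL_2(R)|=|R|^3(1-|k|^{-2})$ for a finite local ring $R$ with residue field $k$. The $p$-part is then $p^{3v}(1-p^{-2})$, $p^{3v}(1+p^{-2})$, $p^{3v}$ for $p$ split, inert, ramified respectively, uniformly including $p=2$.
\end{itemize}
Multiplying over $p\mid f$ gives the formula, with no Hensel lifting, filtrations or strong approximation across different lattices. A small side correction: $\SL(\Oc_D\oplus\Oc_D^\vee)$ is conjugate to $\SL_2(\Oc_D)$ itself, but by an element of $\GL_2(K)$, not $\SL_2(K)$.

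\textbf{The ``alternative check'' fails.} What is proportional to $a_4(D)$, equivalently to Cohen's $L(-1,\chi_{D_0})\sum_{r\mid f}\mu(r)\left(\frac{D_0}{r}\right)r\,\sigma_3(f/r)$, is $\sum_{r\mid f}\chi(X_{r^2D_0})$ (Corollary~\ref{cor:sum:eul:char:XD}), not $\chi(X_D)$. For $f=p$ the Cohen expression gives $\chi(X_{D_0})\bigl(1+p^3-\left(\frac{D_0}{p}\right)p\bigr)$, which is $\chi(X_{D_0})+\chi(X_{p^2D_0})$. The theorem instead says $\chi(X_{p^2D_0})=\chi(X_{D_0})\bigl(p^3-\left(\frac{D_0}{p}\right)p\bigr)$. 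So the reconciliation you plan would not work without a M\"obius inversion. In any case, the paper deduces that corollary \emph{from} this theorem together with Cohen's result, so it cannot serve as an independent route unless you prove it separately.
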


Combining \eqref{eq:eul:char:XD} and a result of  Cohen in \cite{C75}, one has
\begin{Corollary}[Bainbridge]\label{cor:sum:eul:char:XD}
	Let $D >1$ be a non-square discriminant. Let us write $D=f^2D_0$ where $D_0$ is a fundamental discriminant. Then we have
	\begin{equation}\label{eq:sum:eul:char:XD}
	\sum_{r \, | \, f}\chi(X_{r^2D_0}) =\frac{1}{30}\cdot\sum_{\substack{e \equiv D \, [2]}} \sigma_1\left(\frac{D-e^2}{4}\right)
	\end{equation}
\end{Corollary}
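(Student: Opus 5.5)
The plan is to deduce \eqref{eq:sum:eul:char:XD} from Theorem~\ref{th:Bainb:E:char:XD} combined with Cohen's evaluation of the Fourier coefficients of the weight $5/2$ Eisenstein-type form $f_4$ (Remark~\ref{rk:f:4:1:by:Cohen}). By Lemma~\ref{lm:coeff:Fourier:f:m:n}, the right-hand side of \eqref{eq:sum:eul:char:XD} is exactly $\frac{1}{30}a_4(D)$, since $e^2\equiv D\,[4]$ is equivalent to $e\equiv D\,[2]$ for $D\equiv 0,1\,[4]$. Cohen~\cite{C75} identifies $f_4$ with (a multiple of) his form $\mathcal{H}_2=\sum_N H(2,N)q^N$, whose coefficients are generalized class numbers. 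For $D=f^2D_0$ non-square these are
$$
H(2,D)=L(-1,\chi_{D_0})\sum_{r\,|\,f}\mu(r)\Bigl(\frac{D_0}{r}\Bigr) r\,\sigma_{3}(f/r).
$$
I would first fix the normalization by comparing a few coefficients. The constant term of $f_4$ is $-1/24=\frac{1}{2}\zeta(-3)\cdot\ldots$; in Cohen's convention $H(2,0)=\zeta(-3)=1/120$. Matching, say, $a_4(5)=\sigma_1(1)+\sigma_1(1)=2$ against $H(2,5)=L(-1,\chi_5)=\zeta_{K_5}(-1)/\zeta(-1)=(1/30)/(-1/12)\cdot(-1)$ should pin down the constant (I expect $a_4(D)=-12\,H(2,D)$ or similar). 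I would treat that comparison as a check rather than rederive Cohen's theorem.

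Next I rewrite the left-hand side using Theorem~\ref{th:Bainb:E:char:XD}. Write $\zeta_{K_{D_0}}(-1)=\zeta(-1)L(-1,\chi_{D_0})=-\frac{1}{12}L(-1,\chi_{D_0})$. Then
$$
\sum_{r\,|\,f}\chi(X_{r^2D_0})=2\zeta_{K_{D_0}}(-1)\sum_{r\,|\,f}r^3\sum_{s\,|\,r}\Bigl(\frac{D_0}{s}\Bigr)\frac{\mu(s)}{s^2}.
$$
The key step is a multiplicative-function identity. Setting $r=st$, the double sum becomes
$$
\sum_{st\,|\,f}\mu(s)\Bigl(\frac{D_0}{s}\Bigr)s\,t^3=\sum_{s\,|\,f}\mu(s)\Bigl(\frac{D_0}{s}\Bigr)s\,\sigma_3(f/s),
$$
which is exactly the divisor sum in Cohen's $H(2,D)$. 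Hence
$$
\sum_{r\,|\,f}\chi(X_{r^2D_0})=2\zeta(-1)L(-1,\chi_{D_0})\cdot\frac{H(2,D)}{L(-1,\chi_{D_0})}=-\tfrac{1}{6}H(2,D),
$$
and it remains to check that this equals $\frac{1}{30}a_4(D)$, i.e.\ that $a_4(D)=-5H(2,D)$.

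The main obstacle is this normalization, i.e.\ recalling precisely Cohen's formula and the scalar relating $f_4$ to $\mathcal{H}_2$. I would verify it on $D=5$ and $D=8$: $a_4(5)=2$; and $a_4(8)=\sigma_1(2)+2\sigma_1(1)=5$, while $\chi(X_8)=2\zeta_{K_8}(-1)=2\cdot\frac{1}{12}=\frac16$, giving $\frac{1}{30}\cdot 5=\frac16$. This is consistent. Since both sides are then constant multiples of the same arithmetic expression, one check suffices, with a second serving as a sanity test. Note that Cohen's form lies in $\Mb_{5/2}(\Gamma_0(4))$, which is two-dimensional, so one could alternatively identify $f_4$ by its first two coefficients.
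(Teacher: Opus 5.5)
Your proposal is correct and follows the same route as the paper, which only says to combine \eqref{eq:eul:char:XD} with Cohen's result. You supply the details the paper omits: the substitution $r=st$ that turns $\sum_{r\mid f}\chi(X_{r^2D_0})$ into $-\frac{1}{6}H(2,D)$, and the normalization $f_4=-5\,\mathcal{H}_2$, i.e.\ $a_4(D)=-5H(2,D)$, which is confirmed by $a_4(0)=-\frac{1}{24}=-5\zeta(-3)$ and $a_4(1)=\frac{5}{12}=-5\zeta(-1)$.

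The only slips are in your exploratory remarks: $-\frac{1}{24}$ equals $\frac{1}{2}\zeta(-1)$, not $\frac12\zeta(-3)$, and $H(2,5)=L(-1,\chi_5)=-\frac{2}{5}$ with no extra factor $-1$. These do not affect the proof, since your $D=8$ check and your final constant $-5$ are right.
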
 

\subsection{Product locus in $X_D$}\label{subsec:prod:locus}
Following \cite{Bai:GT}, we call the locus of points in $X_D$ which represent Abelian surfaces that are (polarized) product of elliptic curves the {\em product locus} of $X_D$ and denote it by $P_D$. Generic points in $X_D$ represents  Jacobians of smooth genus two curves. A point in $P_D$ represents the Jacobian of a singular curve having two irreducible components of genus one intersecting at a separating node. 
It is well known that $P_D$ is a union of modular curves \cite{Hir73, vdG88}. 
In \cite{Bai:GT}, Bainbridge proved the following
\begin{Theorem}[Bainbridge]\label{th:Bain:eul:char:PD}
	If the discriminant $D$ is not a square then we have 
	\begin{equation}\label{eq:eul:char:PD}
		\chi(P_D) = -\frac{5}{2}\cdot \chi(X_D).
	\end{equation}
\end{Theorem}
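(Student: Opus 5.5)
This is Bainbridge's theorem, stated here as a known result from \cite{Bai:GT}; we sketch how one would establish it from the tools already recorded above. The plan is to compute $\chi(P_D)$ directly as an arithmetic sum of class-number-like quantities, and then compare with the formula of Corollary~\ref{cor:sum:eul:char:XD}.

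\textbf{Step 1: parametrize the components of $P_D$.} A point of $P_D$ is a polarized product $E_1\times E_2$ together with a proper self-adjoint action of $\Oc_D$. Such an action is determined by a pair of isogenies between $E_1$ and $E_2$ of suitable degree together with scalar entries. Concretely, self-adjoint endomorphisms of $E_1\times E_2$ are matrices $\left(\begin{smallmatrix} a & \phi^\vee\\ \phi & b\end{smallmatrix}\right)$ with $a,b\in\Z$ and $\phi:E_1\to E_2$ an isogeny. A generator $T$ of $\Oc_D$ satisfies $T^2 - eT + c=0$, and this forces
\[
(a-b)^2+4\deg\phi = D .
\]
Writing $e=a-b$, the components of $P_D$ are therefore indexed by $e$ with $e\equiv D\,[2]$ and $e^2<D$, and by cyclic-type isogenies of degree $n=(D-e^2)/4$. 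Each such component is a cover of $\Hbb/\SL(2,\Z)$ of the form $\Hbb/\Gamma_0(n)$ or a union of these; non-cyclic isogenies $\phi=m\phi'$ are excluded or accounted for by properness.

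\textbf{Step 2: sum the Euler characteristics.} Since $\chi(\Hbb/\SL(2,\Z))=-1/12$, counting all isogenies of degree $n$ (cyclic or not) gives a total cover degree of $\sigma_1(n)$ over the modular curve, up to the factor $1/2$ coming from the symmetry swapping $E_1,E_2$ and from $\pm e$. Summing over the lattice of orders $\Oc_{r^2D_0}$ with $r\mid f$ removes the properness condition. We expect an identity of the form
\[
\sum_{r\mid f}\chi(P_{r^2D_0}) = -\frac{1}{12}\cdot\frac{1}{2}\sum_{e\equiv D\,[2]}\sigma_1\!\left(\frac{D-e^2}{4}\right)\cdot \kappa
\]
for an explicit constant $\kappa$.

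\textbf{Step 3: compare and invert.} Comparing with \eqref{eq:sum:eul:char:XD}, whose right-hand side is $\frac{1}{30}$ times the same sum, gives
\[
\sum_{r\mid f}\chi(P_{r^2D_0}) = -\frac{5}{2}\sum_{r\mid f}\chi(X_{r^2D_0})
\]
once $\kappa$ is correct, since $-\tfrac{5}{2}\cdot\tfrac{1}{30}=-\tfrac{1}{12}$. Möbius inversion over the divisors of $f$ then yields $\chi(P_D)=-\frac52\chi(X_D)$. The consistency check $-\tfrac52\cdot\tfrac1{30}=-\tfrac1{12}$ suggests that $\kappa$ combined with the $1/2$ should equal $1$, i.e.\ $\kappa=2$. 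This can come from counting both orientations, $\phi$ and its dual role, or the two embeddings $\rho$ versus $\rho\circ\sigma$.

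\textbf{Main obstacle.} The hardest step is Step 1–2: getting the bookkeeping of the orbifold automorphisms exactly right. This includes the symmetries $\pm e$, the swap of factors, $-1\in\SL$ acting trivially, and the correspondence between proper and improper actions. Together these fix the constant $\kappa$. We would calibrate $\kappa$ on small fundamental discriminants such as $D=5,8$, where $\chi(X_D)=2\zeta_{K_D}(-1)$ is known explicitly.
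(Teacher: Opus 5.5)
Your outline has the same skeleton as the paper's sketch: parametrize by $e$ and an isogeny, sum over $r\mid f$ to get $S_D:=\sum_{e\equiv D\,[2]}\sigma_1(\frac{D-e^2}{4})$, compare with \eqref{eq:sum:eul:char:XD}, and invert. The gap is that you never compute the constant, which is the only step with real content. You posit an identity with an unknown $\kappa$ and then set $\kappa=2$ because that value yields $-\frac52\chi(X_D)$. That assumes the conclusion. Calibrating on $D=5,8$ does not fix this. It presupposes that the constant is independent of $D$, which is exactly what the bookkeeping must prove, and it needs an independent computation of $\chi(P_5)$.

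The paper avoids the automorphism and ordering issues on $P_D$ by working on the eigenform curve $Q_D=\Omega Q_D/\C^*$, with $\Omega Q_D\subset\Omega\Mcal_1\times\Omega\Mcal_1$. By McMullen, its components are in bijection with prototypes $(e,\ell,m)$, where $D=e^2+4\ell^2m$ and $\gcd(e,\ell)=1$, and each component is isomorphic to $\GL^+(2,\R)/\Gamma_0(m)$. So non-cyclic isogenies $\ell\phi'$ do occur on $P_D$ itself; properness only imposes $\gcd(e,\ell)=1$. Summing over $r\mid f$ removes the gcd condition. Then $\sum_{\ell^2m=n}\chi(\Hbb/\Gamma_0(m))=-\frac16\sigma_1(n)$ gives $\sum_{r\mid f}\chi(Q_{r^2D_0})=-\frac16 S_D=-5\sum_{r\mid f}\chi(X_{r^2D_0})$, hence $\chi(Q_D)=-5\chi(X_D)$. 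The only remaining factor is the degree $2$ of the forgetful map $Q_D\to P_D$.

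Your value $\kappa=2$ is also an artifact of mixing normalizations, not a geometric multiplicity. In this paper $\chi$ is computed by integrating $\eta_1\wedge\eta_2$, so $\chi(X_5)=2\zeta_{K_5}(-1)=\frac1{15}$. In that normalization $\chi(\Hbb/\SL(2,\Z))=-\frac16$, which the paper uses explicitly. The value $-\frac1{12}$ counts the generic stabilizer $\pm I$, which \eqref{eq:sum:eul:char:XD} does not.

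With $-\frac16$, your factor $\frac12$ (which plays the role of the degree of $Q_D\to P_D$) already closes the count with $\kappa=1$. For example, when $D=5$ the prototypes $(\pm1,1,1)$ give $\chi(Q_5)=-\frac13$, so $\chi(P_5)=-\frac16=-\frac52\chi(X_5)$. The extra sources of a factor $2$ you suggest (orientations, the dual of $\phi$, $\rho$ versus $\rho\circ\sigma$) therefore do not exist. Building any of them into a correctly normalized count would double count.
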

\begin{proof}[Sketch of proof]
	Let $\Omega\Mcal_1$ denote the space of pairs $(E,\omega)$ where $E$ is an elliptic curve and $\omega$ is a holomorphic one form on $E$. 
	Let $\Omega Q_D$ denote the subset of $\Omega \Mcal_{1}\times\Omega \Mcal_1$ consisting of elements $((E_1,\omega_1),(E_2,\omega_2))$ such that 
	\begin{itemize}
		\item the Abelian surface $E_1\times E_2$ admits a real multiplication by $\Oc_D$,
		
		\item the holomorphic $1$-form $\omega:=\omega_1+\omega_2$ on $E_1\times E_2$ is an eigenvector of the action of $\Oc_D$ on $\Omega^1(E_1\times E_2)$ (here by a slight abuse of notation, the pullback of $\omega_i$ to $E_1\times E_2$ is denoted again by $\omega_i$).
	\end{itemize}
Let $Q_D$ denote the quotient  $\Omega Q_D/\C^*$. We claim that
\begin{equation}\label{eq:eul:char:QD}
	\chi(Q_D)=-5\chi(X_D)
\end{equation}
Call a triple of integers $(e,\ell, m)$ such that 
$$
D = e^2 + 4\ell^2m \quad \ell >0, m >0, \quad \text{and} \quad \gcd(e,\ell)=1
$$
a {\em product of tori prototype} for the  discriminant $D$. 
In \cite{McM:spin}, McMullen showed that each connected component of $\Omega Q_D$ corresponds uniquely to a product of tori prototype $(e,\ell,m)$, and is isomorphic to $\GL(2,\R)/\Gamma_0(m)$.  It is  well known that 
$$
\chi(\Hbb/\Gamma_0(m))=-\frac{1}{6}\cdot m \cdot \prod_{\substack{p \, | \, m \\ p \; {\rm prime}}}\left(1+\frac{1}{p} \right)=:\psi(m),
$$
and we have
$$
\sum_{\substack{n=\ell^2 m \\ \ell, m \in \N}} \psi(m)=-\frac{1}{6}\sigma_1(n).
$$
Let us write $D=f^2D_0$, where $D_0$ is a fundamental discriminant. 
We have
\[
\sum_{r \, | \, f }\chi(Q_{r^2D_0}) = \sum_{\substack{ D=e^2+4\ell^2m \\ \ell, m  >0}}\psi(m) =-\frac{1}{6}\sum_{\substack{e \equiv D \, [2] }}\sigma_1(\frac{D-e^2}{4}).
\]
Combining with \eqref{eq:sum:eul:char:XD}, we get
\[
\sum_{r \, | \, f }\chi(Q_{r^2D}) = -5\cdot \sum_{r \, | \, f}\chi(X_{r^2D})
\]
and the claim follows.

Let $F: Q_D \to P_D$ be the map consisting of forgetting the eigenform $\omega$. For every $E_1\times E_2 \in P_D$,   $\Oc_D$ has two eigenforms $\omega, \omega'$ in $\Omega^1(E_1\times E_2)$ such that $\Omega^1(E_1\times E_2) \simeq \C\cdot\omega\oplus\C\cdot\omega'$. Thus $F$ is a two-to-one covering map, from which we get the desired conclusion. 
\end{proof} 

\subsection{Prym eigenforms and Abelian surfaces with $(1,2)$-polarization}
Consider an Abelian differential $(C,\omega) \in \Omega E_D(2,2)^\odd$.
By definition  $C$ is a smooth curve of genus $3$ admitting an involution $\tau$ which has $4$ fixed points. 
Denote by $\Omega^-(C,\tau)$ the eigenspace associated with the eigenvalue $-1$ of the action of $\tau^*$ on $\Omega(C)$.
Note that we have  $\dim_\C\Omega(C,\tau)^-=2$.
Let 
$$
H_1(X,\Z)^-:=\{c\in H_1(C,\Z), \; \tau_*c=-c\}.
$$
The Prym variety of $C$ with respect to $\tau$ is defined to be (see for instance \cite{McM:prym})
$$
\Prym(C,\tau):=\Omega^-(C,\tau)^*/H_1(X,\Z)^-.
$$    
The surface $\Prym(C,\tau)$ is a subvariety of the Jacobian $J(C)$ of $C$, and the  polarization of $J(C)$ induces a polarization   of type $(1,2)$ on $\Prym(C,\tau)$ (cf. \cite{Bar87, Mo14}).

Let $D\in \N$ be a discriminant.   
Let $X'_D$ be the moduli space of Abelian surfaces with $(1,2)$ polarization admitting real multiplication by $\Oc_D$. Then $X'_D$ is a complex surface constructed in a similar way as $X_D$. In particular, we have 
$$
X'_D=(\Hbb\times\Hbb)/\SL(\fraka\oplus\Oc^\vee_D),
$$
where $\fraka$ is a fractional  $\Oc_D$-ideal of norm $2$, and $\SL(\fraka\oplus\Oc^\vee_D)$ is the subgroup of $\SL(2,\Q(\sqrt{D}))$ preserving the  lattice $\fraka\oplus\Oc^\vee_D$ (see \cite[Prop.1.1]{Mo14} for more details).  Note that $X'_D$ is empty if $D\equiv 5 \; [8]$ (cf. \cite{LN:H4, Mo14}). We will also call $X'_D$ a Hilbert modular surface.  

Let $\eta_i, \; i=1,2$ be the $2$-forms on the two factors of $\Hbb\times\Hbb$ defined in \eqref{eq:vol:form:Hyp}. The Euler characteristic of $X'_D$ is then given by
\begin{equation}
\chi(X'_D)=\int_{X'_D}\eta_1\wedge\eta_2
\end{equation}
The following result was proved in \cite{Mo14}
\begin{Proposition}[M\"oller]\label{prop:eul:char:XD:p}
	Assume that $D\equiv 0,1,4 \; [8]$ and $D=f^2D_0$, where $D_0> 1$ is a fundamental discriminant. Then we have
	\begin{equation}\label{eq:eul:char:XD:p:ratio}
	\frac{\chi(X'_D)}{\chi(X_D)} =\left\{ 
	\begin{array}{ll}
		1 & \hbox{ if $2 \nmid f$},\\
		3/2 & \hbox{ if $2 \, | \, f$}.
	\end{array}
	\right.
     \end{equation} 
\end{Proposition}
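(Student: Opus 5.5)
The plan is to compare the two Hilbert modular surfaces through the lattice groups that define them. Write $\Gamma:=\SL(\Oc_D\oplus\Oc_D^\vee)$ and $\Gamma':=\SL(\fraka\oplus\Oc_D^\vee)$. Both are arithmetic subgroups of $\SL(2,K_D)$, $K_D=\Q(\sqrt{D})$, and they are commensurable, since both lattices are commensurable with $\Oc_D\oplus\Oc_D^\vee$. The Euler characteristic is the integral of $\eta_1\wedge\eta_2$, so it is multiplicative in indices. Setting $M:=\Gamma\cap\Gamma'$ (modulo $\pm 1$ on both sides), we get
$$
\frac{\chi(X'_D)}{\chi(X_D)}=\frac{[\Gamma:M]}{[\Gamma':M]}.
$$
The stabilizers of lattices in $K_D^2$ are determined by their completions at all primes. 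Since $\fraka$ has norm $2$, we have $\fraka_p=\Oc_{D,p}$ for every odd prime $p$. The whole comparison therefore reduces to the local groups at $p=2$: $\Gamma_2=\SL(\Oc_{D,2}\oplus\Oc^\vee_{D,2})$ and $\Gamma'_2=\SL(\fraka_2\oplus\Oc^\vee_{D,2})$. By strong approximation for $\SL_2$, the global indices equal the local indices $[\Gamma_2:\Gamma_2\cap\Gamma'_2]$ and $[\Gamma'_2:\Gamma_2\cap\Gamma'_2]$.

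\emph{Case $f$ odd.} Here $\Oc_{D,2}$ is the maximal order of $K_D\otimes\Q_2$. Hence $\fraka_2$ is invertible, and so principal: $\fraka_2=\pi\Oc_{D,2}$ with $N(\pi)$ of $2$-adic valuation one. Conjugation by $g=\mathrm{diag}(\pi,1)\in\GL(2,K_{D,2})$ sends $\Oc_{D,2}\oplus\Oc_{D,2}^\vee$ to $\fraka_2\oplus\Oc^\vee_{D,2}$. Therefore $\Gamma'_2=g\Gamma_2g^{-1}$, and both are maximal compact subgroups of $\SL_2$ over the $2$-adic algebra. Conjugate open compact subgroups have equal Haar volume, which gives $[\Gamma_2:M_2]=[\Gamma'_2:M_2]$ and ratio $1$. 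One could instead check this globally, using that the multiplier ring of $\fraka$ is $\Oc_D$.

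\emph{Case $f$ even.} Here $\Oc_{D,2}$ is non-maximal at $2$. Since $D\equiv0,4\,[8]$, the norm-$2$ ideal $\fraka$ is not invertible. I expect its multiplier ring $\{x\in K_D : x\fraka\subset\fraka\}$ to be the overorder $\Oc_{D/4}$, with $\fraka_2$ locally a free module over $\Oc_{D/4,2}$. I would verify this using explicit generators of $\fraka$, for example $\fraka=(2,\frac{D+\sqrt D}{2})$ or a similar choice adapted to the residue of $D/4$ mod $4$.

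The computation then becomes one of counting finite orbits. $\Gamma_2$ acts on the set of lattices obtained from $L=\Oc_{D,2}\oplus\Oc^\vee_{D,2}$ by the same local operation that produces $L'=\fraka_2\oplus\Oc^\vee_{D,2}$. The orbit size of $L'$ under $\Gamma_2$ is $[\Gamma_2:M_2]$, and symmetrically for $[\Gamma'_2:M_2]$. I expect these orbits to be parametrized by points of a small projective line or conic over $\mathbb F_2$ modulo the unit action, giving orbit sizes $3$ and $2$, hence the ratio $3/2$.

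I expect this $2$-adic orbit count in the non-maximal case to be the main obstacle, specifically checking which sublattices of index $2$ are $\Oc_D$-stable and unimodular of type $(1,2)$. My first attempt would be direct reduction modulo $2\Oc_{D/4,2}$. If that becomes unwieldy, the fallback is to compute both local volumes via Bainbridge's formula in Theorem~\ref{th:Bainb:E:char:XD}, whose local Euler factor at $2$ encodes exactly this index. Throughout I would cross-check against small examples, such as $D=12$ (ratio $1$) and $D=32$ (ratio $3/2$), using Corollary~\ref{cor:sum:eul:char:XD}.
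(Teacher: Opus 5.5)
The paper gives no proof of this proposition; it is quoted from M\"oller \cite{Mo14}, so your argument has to stand on its own. Your framework is sound. The ratio equals $[\Gamma:M]/[\Gamma':M]$, and strong approximation correctly localizes both indices at $2$. Your case $f$ odd is complete: there $\fraka_2$ is principal, $\Gamma'_2=g\Gamma_2g^{-1}$, and $\mathrm{Ad}(g)$ preserves Haar measure on $\SL_2$, so the ratio is $1$.

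\textbf{The gap is the case $2\mid f$,} which is the only case with real content. There you only \emph{expect} orbit sizes $3$ and $2$. Your fallback also does not work as stated. Bainbridge's formula (Theorem~\ref{th:Bainb:E:char:XD}) only concerns $\SL(\Oc_D\oplus\Oc_D^\vee)$ and says nothing about $\Gamma'$. Likewise, Corollary~\ref{cor:sum:eul:char:XD} cannot cross-check the ratio, because it involves only $\chi(X_D)$.

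\textbf{How to close it.} The missing computation is short once $\fraka$ is identified.
\begin{itemize}
\item For $2\mid f$ we have $\Oc_D=\Z+2\Oc_{D/4}$. This order has a unique ideal of index $2$, namely the conductor $\fraka=2\Oc_{D/4}=(\Oc_D:\Oc_{D/4})$. So its multiplier ring is indeed $\Oc_{D/4}$, as you guessed.
\item Conjugate by $\mathrm{diag}(\sqrt D,1)$. Then $\Gamma$ becomes $G=\SL_2(\Oc_D)$, and $\Gamma'$ becomes $G'=\{\left(\begin{smallmatrix} a&b\\ c&d\end{smallmatrix}\right): a,c\in\Oc_{D/4},\ b\in\fraka,\ d\in\Oc_D,\ ad-bc=1\}$.
\item The intersection $M=G\cap G'$ consists of the elements of $\SL_2(\Oc_D)$ with $b\in\fraka$. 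Since $\SL_2(\Oc_D)\to\SL_2(\Oc_D/\fraka)=\SL_2(\mathbb{F}_2)$ is surjective, $[G:M]=|\Pb^1(\mathbb{F}_2)|=3$.
\item For $G'$, reduce $ad-bc=1$ modulo $\fraka$. Since $d\in\Oc_D$ and $\Oc_D/\fraka=\mathbb{F}_2$, this forces $a\equiv d\equiv 1 \bmod \fraka$, so $a\in\Oc_D$ automatically.
\item Hence $\gamma\mapsto c \bmod \fraka$ is a homomorphism $G'\to(\Oc_{D/4}/\fraka,+)\simeq(\Z/2)^2$. It is surjective, using the lower unipotent matrices. $M$ is exactly the preimage of $\Oc_D/\fraka\simeq\Z/2$, so $[G':M]=2$ and the ratio is $3/2$.
\end{itemize}

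This can be done globally, so strong approximation is not even needed. Your Bainbridge fallback can be made to work the same way. Compare $\SL(\Oc_{D/4}\oplus\Oc_D)$, which is conjugate to $G'$, with $\SL_2(\Oc_{D/4})$. The index is $|\Pb^1(\Oc_{D/4}/2\Oc_{D/4})|\cdot|(\Oc_{D/4}/2\Oc_{D/4})^*|$, which equals $9$, $15$, $12$ according as $D/4\equiv 1\,[8]$, $5\,[8]$, $0\,[4]$. This is $\tfrac32$ times the ratios $6$, $10$, $8$ of Proposition~\ref{prop:conseq:Bainbridge}.
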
 

\subsection{Eigenforms on products of elliptic curves}\label{subsec:eigen:form:on:product}
Let $P'_D \subset X'_D$ denote the locus of Abelian surfaces $A$  that are isomorphic to a products of two elliptic curves $E_1\times E_2$ and satisfies the following: let $L_i$ be the line bundle defining the principal polarization on $E_i$, then the polarization on $A$ is given by $p_1^*L_1\otimes p_2^*L_2^{\otimes 2}$, where $p_i: A \to E_i$ is the natural projection. 
In more concrete terms: let $\langle.,.\rangle_i$ denote the intersection forms on $H_1(E_i,\Z)$. Then the polarization of $A$ is given the skew-symmetric bilinear form $\langle.,.\rangle:= \langle.,.\rangle_1+2\langle.,.\rangle_2$ on $H_1(A,\Z)\simeq H_1(E_1,\Z)\times H_1(E_2,\Z)$.  

A smooth curve  is called {\em bielliptic} if it admits a morphism of degree two onto an elliptic curve.  
Generic points in $X'_D$ correspond to Prym varieties of smooth genus three bielliptic curves. The bielliptic structure on such a curve is given by an involution  fixing four points. Call this involution  {\em Prym involution}. 
The notion of Prym involution can be extended to singular curves. In particular, let $C$ be a singular curve of genus three having four irreducible components: three of the components are elliptic curves, and the remaining one is isomorphic to $\Pb^1$ meeting each of the elliptic components at one node. 
Assume that two of the elliptic components are isomorphic, then there is  an involution of $C$ which permutes these two components and fixes the other two. In this case $C$ is a degeneration of billeliptic curves of genus three, and the involution described above is induced by the Prym involution on the curves degenerating to $C$. By construction the Prym variety of $C$ is isomorphic to a product of two elliptic curves. 
Assume moreover that the Prym varieties of the curves degenerating to $C$ admit a real multiplication by $\Oc_D$. Then so does the Prym variety of $C$. Thus this Abelian surface corresponds to a point in $P'_D$.

\medskip 

Consider an Abelian surface $A\simeq E_1\times E_2$ corresponding to a point in $P'_D$. 
A holomorphic $1$-form $\omega$ on $A$ is equivalent to a pair $((E_1,\omega_1),(E_2,\omega_2))$, where $\omega_i$ is a holomorphic $1$-form on $E_i$. By definition, $(E_i,\omega_i)$ is an element of the Hodge bundle $\Omega\Mcal_{1,1}$ over $\Mcal_{1,1}$.
It is a well known fact that $\Omega \Mcal_{1,1} \simeq \GL^+(2,\R)/\SL(2,\Z)$. Thus the group $\GL^+(2,\R)$ acts diagonally on $\Omega\Mcal_{1,1}\times \Omega\Mcal_{1,1}$.

By assumption,  $\End(A)$ contains a self-adjoint proper subring isomorphic to $\Oc_D$. Let $\Omega Q'_D$ be the spaces of pairs $(A,\omega)$, where $A\in P'_D$ and $\omega$ is an eigenform for the action of $\Oc_D$ on $\Omega^1(A)$. It is not difficult to see that $\Omega Q'_D$ consists of finitely many $\GL^+(2,\R)$-orbits in $\Omega\Mcal_{1,1}\times \Omega\Mcal_{1,1}$.    Let $Q'_D:=\Omega Q'_D/\C^*$. Then $Q'_D$ is a  finite cover of the modular curve $\Hbb/\SL(2,\Z)$. 

\medskip 

We have a natural map  $F: Q'_D \to P'_D$ which consists of forgetting the eigenform $\omega$ in the pairs $(A,[\omega])\in  Q'_D$.  Since for each $A\in P'_D$, we have $\Omega^1(A)= \C\cdot\omega\oplus\C\cdot\omega'$, where $\omega$ and $\omega'$ are two eigenvectors of $\Oc_D$, it follows that $F: Q'_D \to P'_D$ is a covering of degree two. To summarize, we have
 \begin{Proposition}\label{prop:QDp:n:PDp}
 	For all $D >9, D\equiv 0,1,4 \, [8]$, $D$ is not a square,  $Q'_D$ is a finite union of algebraic curves and we have
 	\begin{equation}\label{eq:eul:char:QDp:n:PDp}
 	\chi(P'_D)=\frac{1}{2}\cdot\chi(Q'_D).
 \end{equation}
 \end{Proposition}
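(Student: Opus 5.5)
Proposition~\ref{prop:QDp:n:PDp} has two parts: $Q'_D$ is a finite union of algebraic curves, and $\chi(P'_D)=\frac12\chi(Q'_D)$. I would prove them in that order.

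\emph{Structure of $Q'_D$.} Since $\Omega Q'_D$ consists of finitely many $\GL^+(2,\R)$-orbits in $\Omega\Mcal_{1,1}\times\Omega\Mcal_{1,1}$, it is enough to show that each orbit is closed and that its stabilizer is a finite-index subgroup of $\SL(2,\Z)$. Let $((E_1,\omega_1),(E_2,\omega_2))\in\Omega Q'_D$, and write the eigenform condition concretely. The generator $T$ of $\Oc_D$ acts on $H_1(E_1,\Z)\oplus H_1(E_2,\Z)$ by an integral matrix. It is self-adjoint for the form $\langle.,.\rangle_1+2\langle.,.\rangle_2$, and its transpose preserves the line $\C\cdot(\omega_1,\omega_2)$.

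Writing $T$ in blocks with respect to the decomposition $H_1(E_1)\oplus H_1(E_2)$, the eigenform condition says that the period maps of $\omega_1$ and $\omega_2$ are related by an integral $2\times 2$ matrix $M$, up to the scalar eigenvalue $\lambda\in\Q(\sqrt D)$. So the pair is determined by $(E_1,\omega_1)$ together with the finite data $M$, as in McMullen's product-of-tori prototypes $(e,\ell,m)$ from the proof of Theorem~\ref{th:Bain:eul:char:PD}.

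The stabilizer of such a pair in $\SL(2,\Z)$ consists of the $\gamma$ with $\gamma M\gamma^{-1}$ integral and compatible with $M$. This is a congruence subgroup of the type $\Gamma_0(m)$, hence of finite index. Therefore each orbit is isomorphic to $\GL^+(2,\R)/\Gamma$ with $[\SL(2,\Z):\Gamma]<\infty$. After dividing by $\C^*$ it becomes $\Hbb/\Gamma$, which is an algebraic curve, and finiteness of the set of orbits gives the first claim.

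\emph{Degree of $F$.} The map $F:Q'_D\to P'_D$ is algebraic. Its fibre over $A$ is the set of eigenlines of $\Oc_D$ in $\Omega^1(A)$. Because $D$ is not a square, $T$ acts on the $2$-dimensional space $\Omega^1(A)$ with the two distinct eigenvalues $\lambda,\lambda'$ (the real embeddings of a generator of $\Oc_D$). So there are exactly two eigenlines $\C\omega$ and $\C\omega'$, and both give points of $Q'_D$.

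These two points are never identified. The two points are the same only if an automorphism of $A$ respecting the real multiplication sends $\omega$ to a multiple of $\omega'$. Such an automorphism would commute with $T$, so it would preserve each eigenspace, which rules this out. (If $\rho$ is replaced by its Galois conjugate, one has to check that the convention of the moduli problem, which fixes $\rho$, is respected.)

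Hence $F$ is unramified of degree exactly $2$ on the orbifold level. Since Euler characteristics multiply under finite orbifold coverings, $\chi(Q'_D)=2\chi(P'_D)$, which is \eqref{eq:eul:char:QDp:n:PDp}.

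\emph{Main obstacle.} I expect the hardest step to be the orbifold bookkeeping in the degree count. Automorphisms such as $-\id$ on $E_1\times E_2$, and extra automorphisms when some $E_i$ has $j=0$ or $1728$, act on both $Q'_D$ and $P'_D$. One has to check that their generic stabilizers agree, so that the orbifold Euler characteristics really differ by the factor $2$ and no factor coming from stabilizers is lost. I would first work with the Euler characteristics defined by the volume forms $\eta_i$, where this comparison reduces to comparing covering degrees of hyperbolic orbifolds.
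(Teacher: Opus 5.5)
\paragraph{Summary.} Your route is the paper's own. It gets the curve structure from $\Omega Q'_D$ being finitely many closed $\GL^+(2,\R)$-orbits with finite fibres over $\GL^+(2,\R)/\SL(2,\Z)$, and your treatment of that part is fine. It gets the factor $\frac12$ only from the remark that every $A$ carries two eigenlines $\C\omega$, $\C\omega'$. The step you add to make this rigorous, however, is wrong, and the degree-two count has a genuine gap.

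\paragraph{The two eigenlines can coincide in $Q'_D$.} Points of $Q'_D$ are identified by isomorphisms in $\Omega\Mcal_{1,1}\times\Omega\Mcal_{1,1}$, that is, by product automorphisms $(u_1,u_2)$. These need not commute with $T$. Use the parametrization of Proposition~\ref{prop:deg:proj:Q:D:p:to:mod:curv} and write $\lambda'=(e-\sqrt D)/2$. If $\omega=\omega_1+\omega_2$ corresponds to $(a,b,d,e)$, then $\omega'=\omega_1+\frac{\lambda}{\lambda'}\omega_2$ corresponds to $(a,b,d,-e)$: the sublattice is the same, because $(-e+\sqrt D)/2=-\lambda'$. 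Now take $e=0$, which happens whenever $8\mid D$ (e.g.\ $D=24$, $(a,b,d,e)=(1,0,3,0)$). Then $\omega'=\omega_1-\omega_2$. The polarized automorphism $(\id,-\id)$ conjugates $T$ to $-T=\sigma(T)$ rather than commuting with it, and it identifies the two eigenlines. So that fibre contains one point, not two.

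\paragraph{For $e\neq 0$, the eigenlines lie over different points.} This is the issue you leave in a parenthesis. $X'_D=(\Hbb\times\Hbb)/\SL(\fraka\oplus\Oc_D^\vee)$ remembers the embedding $\rho$, since swapping the two factors of $\Hbb\times\Hbb$ is not in the group. So $F$ sends $(A,[\omega])$ to $(A,\rho_\omega)$, where $\rho_\omega$ has $\omega$ as first eigenform, and it sends $(A,[\omega'])$ to $(A,\rho\circ\sigma)$. This point is generically not isomorphic to $(A,\rho)$:
\begin{itemize}
\item a polarized automorphism of a $(1,2)$-product cannot exchange the factors, because $\langle\cdot,\cdot\rangle_1+2\langle\cdot,\cdot\rangle_2$ distinguishes them;
\item so it has the form $(u_1,u_2)$, which preserves the diagonal blocks $e$, $0$ of $T$;
\item but $\sigma(T)=e-T$ has those blocks exchanged.
\end{itemize}

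\paragraph{Comparison with Bainbridge's case.} Even in the principal case, the second preimage of a point of $P_D$ comes from reordering $E_1,E_2$, not from the second eigenline. For $D=8$ and the prototype $(0,1,2)$, the two eigenlines coincide in $Q_8$. Yet $F$ must be $2:1$ on that component to obtain $\chi(P_8)=-\frac52\chi(X_8)$: the components of $Q_8$ have $\chi=-\frac12,-\frac16,-\frac16$, and $\chi(P_8)=-\frac5{12}$. That reordering symmetry does not exist for $(1,2)$-products.

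\paragraph{What is needed.} The factor $\frac12$ cannot be obtained by counting eigenlines. You must first fix exactly which moduli space $P'_D$ lives in: how $\rho$ versus $\rho\circ\sigma$ is treated, and, for $D\equiv1\,[8]$, how the two ideals of norm $2$ are treated. Then you must count fibres there. The paper's proof asserts the degree two without doing this, so following it does not close the gap. A direct count with $P'_D\subset X'_D$ does not seem to give degree two: for $4\mid D$, where $\fraka=\fraka^\sigma$, $F$ appears to be generically injective. The orbifold stabilizer bookkeeping you single out as the main obstacle is not where the difficulty lies.
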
      

\subsection{Euler characteristics of the curves $Q'_D$}\label{subsec:eul:char:QDp}
Our goal now is to compute $\chi(Q'_D)$. 
A discriminant $D>9, \; D \equiv 0, 1, 4 \; [8]$ is said to be (1,2)-primitive if there does not $f\in \N, f >1$ such that $D=f^2D'$ with $D'\equiv 0,1,4 \; [8]$. We will show
\begin{Proposition}\label{prop:sum:eul:char:QDp}
 	Let $D_0 >9$ be a (1,2)-primitive discriminant. For any $f\in \N, \; f \geq 1$, we have
 	\begin{equation}\label{eq:sum:eul:char:Q:D:p:sigma:1}
 		\sum_{r \, | \, f}\chi(Q'_{r^2D_0}) =-\frac{1}{6}\cdot \sum_{\substack{ e^2 < f^2D_0 \\ e^2 \equiv f^2D_0 \; [8]}} \sigma_1\left(\frac{f^2D_0 -e^2}{8}\right).	
 	\end{equation} 
\end{Proposition}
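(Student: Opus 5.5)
We mimic the sketch of the proof of Theorem~\ref{th:Bain:eul:char:PD}, replacing McMullen's product-of-tori prototypes by the analogous prototypes for $(1,2)$-polarized products. The first step is to classify the $\GL^+(2,\R)$-orbits making up $\Omega Q'_D$. Take $((E_1,\omega_1),(E_2,\omega_2))$, normalize $E_1$ to the square torus with $\omega_1=dz$, and record the action of a generator $T$ of $\Oc_D$ on $H_1(A,\Z)\simeq H_1(E_1,\Z)\oplus H_1(E_2,\Z)$. The conditions are that $T$ is self-adjoint for $\langle.,.\rangle_1+2\langle.,.\rangle_2$ and that $\omega$ is an eigenform.

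Writing $T$ in block form, self-adjointness forces the diagonal blocks to be scalars $a,d$. The off-diagonal blocks are then determined by a single integer matrix $M$ and its adjugate, with a factor $2$ on one side. The characteristic polynomial gives $D=(a-d)^2+8\det M$.

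Reducing $M$ modulo the automorphisms that fix $\omega$ up to the diagonal action, namely $\SL(2,\Z)$ on each factor, we aim to bring $M$ to the form $\left(\begin{smallmatrix}\ell&0\\0&\ell m\end{smallmatrix}\right)$ or a similar normal form. This should give a bijection between components of $\Omega Q'_D$ (for all orders $\Oc_{r^2D_0}$, $r\mid f$, simultaneously) and triples $(e,\ell,m)$ with
\[
D=e^2+8\ell^2 m,\quad \ell,m>0,
\]
where $e$ is taken modulo the appropriate normalization. Each such component is then $\GL^+(2,\R)/\Gamma_0(m)$, so its projectivization has Euler characteristic $\psi(m)$. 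Properness of $\rho$ singles out the condition $\gcd(e,\ell)=1$ (up to the $2$-adic subtlety coming from the $(1,2)$-primitive normalization). Summing over all $r\mid f$ therefore removes the gcd condition, exactly as in the principal case.

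Once the parametrization is established, the computation is formal. It gives
\[
\sum_{r\mid f}\chi(Q'_{r^2D_0})=\sum_{\substack{D=e^2+8\ell^2m\\ \ell,m>0}}\psi(m)=\sum_{\substack{e^2<D\\ e^2\equiv D\,[8]}}\;\sum_{\ell^2m=(D-e^2)/8}\psi(m)=-\frac16\sum_{\substack{e^2<D\\ e^2\equiv D\,[8]}}\sigma_1\!\left(\frac{D-e^2}{8}\right),
\]
using the identity $\sum_{\ell^2m=n}\psi(m)=-\frac16\sigma_1(n)$ recalled earlier.

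The main obstacle is the first step: getting the normal form and the exact bookkeeping right. Three points need care. First, we must check that every $e$ with $e^2\equiv D\,[8]$ (positive and negative) occurs exactly once, and that the stabilizer of the normal form is exactly $\Gamma_0(m)$, not a conjugate or an index-two variant. Second, the orders $\Oc_{r^2D_0}$ with $r\mid f$ must be exactly the ones absorbing the non-primitive triples; this is where $(1,2)$-primitivity, rather than fundamentality, of $D_0$ enters, since $r^2D_0$ must stay $\equiv 0,1,4\,[8]$. Third, since the polarization is asymmetric, we cannot swap $E_1$ and $E_2$, so no extra factor $2$ appears. I would first verify the whole correspondence on a small case, e.g.\ $D=12$ or $D=17$, by listing the orbits directly.
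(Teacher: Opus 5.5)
Your proposal is correct, but it organizes the count differently from the paper.

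The paper never decomposes $Q'_D$ into components. It writes $\chi(Q'_D)=\chi(\Hbb/\SL(2,\Z))\cdot\deg\pi_{1,D}$ for the forgetful map $\pi_{1,D}\colon Q'_D\to\Mcal_{1,1}$. It then fixes $\Lambda_1=\Z^2$ and counts the admissible pairs $(e,\Lambda)$, with $\Lambda\subset\Z^2$ of index $(D-e^2)/8$, by their Hermite normal forms $(a,b,d)$ with $0\le b<a$. This gives $\deg\pi_{1,D}=\#\Pc_D$. Once the condition $\gcd(a,b,d,e)=1$ is dropped, the count for each $e$ is exactly $\sigma_1((D-e^2)/8)$, with no stabilizer to identify.

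You instead group the same sublattices into $\SL(2,\Z)$-orbits by Smith normal form $\mathrm{diag}(\ell,\ell m)$. Each orbit projectivizes to a component $\Hbb/\Gamma_0(m)$ (up to a harmless conjugation). Properness becomes $\gcd(e,\ell)=1$, because $\ell$ is the gcd of the entries of $B$. You then recover $\sigma_1$ through $\sum_{\ell^2m=n}\psi(m)=-\frac16\sigma_1(n)$, which says precisely that the two ways of counting index-$n$ sublattices agree. Your route therefore also yields the component structure of $Q'_D$ (the analogue of McMullen's prototypes), at the price of the stabilizer check that the paper's degree count avoids.

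Most of the points you flagged resolve simply. The sign of $T$ is fixed by requiring its eigenvalue on $\omega$ to be positive. So each $e$ with $e^2<D$, $e^2\equiv D\,[8]$, of either sign, occurs once with no further identification. There is also no $2$-adic subtlety in properness itself: it is exactly $\gcd(e,\ell)=1$.

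The one place you genuinely need $(1,2)$-primitivity is the step ``summing over $r\mid f$ removes the gcd''. This is the paper's Lemma~\ref{lm:card:tPD:n:PD}, and you should supply it. For $s=\gcd(e,\ell)$ you get $s^2\mid D$ for free, but $s\mid f$ requires ruling out $s'=s/\gcd(s,f)>1$. If $s'$ is odd, then $D_0/s'^2\equiv D_0\,[8]$. If $s'$ is even, then $f/\gcd(s,f)$ is odd, which forces $D_0/s'^2\equiv 0,1,4\,[8]$. Either way this contradicts primitivity.
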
 
Let $\tilde{\pi}_{1,D}: \Omega Q'_D \to \Omega\Mcal_{1,1}$ be the map $((E_1,\omega_1), (E_2,\omega_2)) \mapsto (E_1,\omega_1)$. Denote by $\pi_{1,D}$ the induced map from $Q'_D$ to $\Mcal_{1,1}\simeq \Hbb/\SL(2,\Z)$.
\begin{Proposition}\label{prop:deg:proj:Q:D:p:to:mod:curv}
	Let $\Pc_D$ denote the sets of quadruples  of integers $(a,b,d,e)$ such that
	$$
	D=e^2+8ad, \; a>0, \; d>0, \; 0 \leq b < a, \; \gcd(a,b,d,e)=1.
	$$
	Then we have
	\begin{equation}\label{eq:deg:proj:Q:D:p:to:mod:curve}
	\deg \pi_{1,D}= \# \Pc_D.
\end{equation}
\end{Proposition}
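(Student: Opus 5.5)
We need to prove that $\deg \pi_{1,D} = \#\Pc_D$, i.e. to count the points of $Q'_D$ lying over a generic point of $\Mcal_{1,1}$. The plan is to fix a generic $(E_1,\omega_1)$, with $\End(E_1)=\Z$, and parametrize all $(E_2,\omega_2)$ such that $((E_1,\omega_1),(E_2,\omega_2))\in\Omega Q'_D$. Since $\SL(2,\R)$ acts diagonally and $\pi_{1,D}$ is equivariant, it is enough to count such completions for a single generic fibre, up to the identification in $\Omega\Mcal_{1,1}$. Write $\Lambda_i=H_1(E_i,\Z)$ with symplectic bases $(\alpha_i,\beta_i)$, and take the polarization $\langle.,.\rangle_1+2\langle.,.\rangle_2$ on $\Lambda_1\oplus\Lambda_2$. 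A generator $T$ of $\Oc_D$, normalized as $T=(e+\sqrt D)/2$ or in the form with $T^2=eT+2ad$ (to be fixed), acts on $\Lambda_1\oplus\Lambda_2$. In block form it is
$$
T=\left(\begin{array}{cc} t_1 & B\\ C & t_2\end{array}\right), \qquad t_i\in\Z\cdot\id,
$$
because $\End(E_i)=\Z$ generically. Self-adjointness with respect to $\langle.,.\rangle_1+2\langle.,.\rangle_2$ forces $C=2B^{\dagger}$ up to sign, where $B^\dagger$ is the symplectic adjoint; thus $C$ is determined by $B\in \mathrm{Hom}(\Lambda_2,\Lambda_1)\simeq M_2(\Z)$. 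The characteristic relation then reads $(t_1-t_2)^2+8\det B = D$, so $e=t_1-t_2$ and $\det B=(D-e^2)/8$.

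The next step is to impose the eigenform condition. Because $\omega=\omega_1+\omega_2$ is an eigenvector, the period map of $\omega_2$ is obtained from that of $\omega_1$ through $B$ (up to a scalar). Hence $(E_2,\omega_2)$ is determined by $(E_1,\omega_1)$ and $B$, and the condition that $\omega_2$ be nondegenerate requires $\det B>0$ with the correct orientation. Changing the symplectic basis of $\Lambda_2$ (the automorphisms of $E_2$ that preserve $\omega_2$-data act by $\SL(2,\Z)$) replaces $B$ by $B\gamma$ with $\gamma\in\SL(2,\Z)$. The basis of $\Lambda_1$ is fixed, since we are working in a fixed fibre over a marked $(E_1,\omega_1)$; more precisely, we take the fibre over the point of $\GL^+(2,\R)/\SL(2,\Z)$ and absorb this choice. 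We then use Hermite normal form for right multiplication by $\SL(2,\Z)$: every $B$ with $\det B=ad>0$ is equivalent to a unique matrix
$$
\left(\begin{array}{cc} a & b\\ 0 & d\end{array}\right), \qquad a,d>0,\ 0\le b<a,
$$
possibly after transposition or another normalization, which fixes which index carries the range of $b$. This produces the triple $(a,b,d)$, and $e$ is recorded alongside it. I have to check that swapping the roles, i.e. using $T$ versus its conjugate, or $e\mapsto -e$ versus choosing the other eigenform, is not double counted. Since $\omega$ is specified and $\omega$ distinguishes the eigenvalue, the sign of $e$ should be determined.

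It remains to match properness with $\gcd(a,b,d,e)=1$. The action is proper exactly when $(T-t_2)/k$ is not integral for any $k>1$. Now $T-t_2\,\id$ has block entries $e$, $B$, $2B^\dagger$ and $0$, so it is divisible by $k$ iff $k\mid e$ and $k\mid B$, meaning $k$ divides every entry $a,b,d$ of the Hermite form. Here the $2B^\dagger$ block gives nothing new for odd $k$. For $k=2$, I also need to check that $(T-t_2)/2$ would be integral and that it generates an order of discriminant $D/4$. This seems right, but the congruence bookkeeping needs care, since $D/4$ must again be $\equiv 0,1,4\,[8]$ for the $(1,2)$-setting, and possibly a different normalization of $T$ is needed. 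This gives the bijection between points of a generic fibre and $\Pc_D$.

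I expect the main obstacle to be the careful identification of the automorphism group acting on $B$, namely whether it is exactly $\SL(2,\Z)$ acting on one side and how $\pm\id$ and the $\C^*$-quotient act, so that each $\SL(2,\Z)$-orbit of $B$ gives exactly one point of the fibre. The sign conventions in self-adjointness and orientation ($\det B>0$) need the same care. For these I would first run the argument in the principal case of McMullen's prototypes $(e,\ell,m)$ in the sketch of Theorem~\ref{th:Bain:eul:char:PD}, where $\deg$ over $\Mcal_{1,1}$ reproduces $\sigma_1$-type counts, and then adapt the factor $2$.
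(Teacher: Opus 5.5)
Your proposal follows essentially the same route as the paper's sketch: put the self-adjoint generator $T$ in block form, read off $e^2+8\det=D$, use the eigenform condition to show that the rescaled period lattice of $\omega_2$ is a sublattice of index $(D-e^2)/8$ of $\Lambda_1$, enumerate these sublattices by Hermite normal form, and identify properness with $\gcd(a,b,d,e)=1$. There is one slip to correct: with $B\in\mathrm{Hom}(\Lambda_2,\Lambda_1)$ and the polarization $\langle.,.\rangle_1+2\langle.,.\rangle_2$, self-adjointness gives $\langle Bx_2,y_1\rangle_1=2\langle x_2,Cy_1\rangle_2$, i.e.\ $B=2C^\dagger$ rather than $C=2B^\dagger$ (this is the paper's normal form $\left(\begin{smallmatrix} eI_2 & 2B\\ B^* & 0\end{smallmatrix}\right)$), so the free integral block is $C$, with $\det C=(D-e^2)/8$ and $2\,\omega_1\circ C^\dagger=(\lambda-t_2)\,\omega_2$; this leaves the count unchanged, and it also removes your worry about $k=2$, since for every $k$ the matrix $T-t_2$ is divisible by $k$ exactly when $k\mid e$ and $k$ divides the free block, with no congruence condition on $D/4$ involved.
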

 \begin{proof}[Sketch of proof]
 	Let $((E_1,\omega_1),(E_2,\omega_2))$ be an eigenform in $\Omega Q'_D$, where $(E_i,\omega_i) \in \Omega\Mcal_{1,1}$. 
 	Let $(\alpha_i,\beta_i)$ be a symplectic basis of $H_1(E_i,\Z), \; i=1,2$. Then $\Bc:=(\alpha_1,\beta_1,\alpha_2,\beta_2)$ is a basis of $H_1(E_1\times E_2,\Z)$. 
 	The polarization on $E_1\times E_2$ is associated with  the skew symmetric form $\langle.,.\rangle$ on $H_1(E_1\times E_2, \Z)$ given by the matrix $\left(\begin{smallmatrix} J & 0 \\ 0 & 2J \end{smallmatrix} \right)$, where $J=\left(\begin{smallmatrix} 0 & 1 \\ -1 & 0 \end{smallmatrix} \right)$, in the basis $\Bc$. 
 	
 	Let us write $\omega=\omega_1+\omega_2 \in \Omega^1(E_1\times E_2)$.
 	There is a unique element $T \in \End(E_1\times E_2)$ self-adjoint with respect to $\langle.,.\rangle$ which generates $\Oc_D$ and satisfies the following
 	\begin{itemize} 		
 	\item[$\bullet$] $T^*\omega=\lambda\omega$, where $\lambda \in \R_{>0}$,
 	
 	\item[$\bullet$] $T$ is given in the basis $\Bc$ by a matrix of the form
 	$$
 	T=\left(\begin{array}{cc}
 		eI_2 & 2B \\
 		B^* & 0_2
 	\end{array} \right)
 	$$
 	where $e\in \Z$, $B=\left(\begin{smallmatrix}
 		a & b \\ c & d
 	\end{smallmatrix} \right) \in \Mb_2(\Z)$, $B^*=\left(\begin{smallmatrix}
 	d & -b \\ -c & a
 	\end{smallmatrix} \right)$, and $\gcd(a,b,c,d,e)=1$. 
 	\end{itemize}
 	Note that the condition $\gcd(a,b,c,d,e)=1$ guarantees that the subring generated by $T$ is proper in $\End(E_1\times E_2)$.
 	
 	Since $T^2=eT+2\det B\cdot I_4$, the condition that $T$ generates $\Oc_D$ implies $e^2+8\det B =D$.
 	It is not difficult to see that $\det B >0$. Since $\lambda$ is the unique  positive root of the polynomial $x^2-ex-2\det B$, we must have $\lambda=(e+\sqrt{D})/2$.  
 	We can view $B$ as the matrix of a covering map $\varphi: E_2 \to E_1$. The condition $T^*\omega=\lambda\omega$ implies that 
 	\begin{equation}\label{eq:eig:form:product:cond}
 	\varphi^*\omega_1=\frac{\lambda}{2}\omega_2.
 	\end{equation}
 	There are lattices $\Lambda_1, \Lambda_2$ in $\C$ such that $(E_i,\omega_i) \simeq (\C/\Lambda_i,dz)$. Condition~\ref{eq:eig:form:product:cond} implies that $\frac{\lambda}{2}\cdot\Lambda_2$ is a sublattice of index $\det B$ in $\Lambda_1$. 
 	
 	Let us fix now a lattice $\Lambda_1\subset \C$, which represents an element of $\Mcal_{1,1}$. The degree of the map $\pi_{1,D}$ is the number of lattices $\Lambda_2$ such that $((\C/\Lambda_1,dz),(\C/\Lambda_2, dz)) \in \Omega Q'_D$. From the argument above this set is in bijection with the set of pairs $(e, \Lambda)$, where 
 	\begin{itemize}
 		\item[(i)] $e\in \Z, \; e^2 < D$, and $e^2 \equiv D \, [8]$.
 		
 		\item[(ii)] $\Lambda$ is a sublattice of index $(D-e^2)/8$ of $\Lambda_1$. 
 		\item[(iii)] If $B= \left(\begin{smallmatrix} a & b \\ c & d \end{smallmatrix} \right)\in \Mb_2(\Z)$ is the matrix of some basis of $\Lambda$ in a basis of  $\Lambda_1$,  then we must have $\gcd(a,b,c,d,e)=1$.   
 	\end{itemize} 
 	For simplicity,  assume that $\Lambda_1=\Z\oplus \imath \Z \simeq \Z^2$. Then every sublattice $\Lambda \subset \Z^2$ has a unique basis $(u,v)$ where 
 	\begin{itemize}
 		\item[$\bullet$] $u=(a,0)$ with $a>0$,
 		
 		\item[$\bullet$] $v=(b,d)$, where $0\leq b < a$, and $d>0$.
 	\end{itemize}  
 	Thus the set of $(e,\Lambda)$ satisfying the conditions (i), (ii), (iii) above is in bijection with the set
 	$$
 	\Pcal_D=\{(a,b,d,e) \in \Z^4, \, a >0, \, d > 0, \, 0 \leq b < a, \; D= e^2+8ad, \, \gcd(a,b,d,e)=1\},
 	$$
 	and the proposition follows.
 \end{proof}
 
\begin{Lemma}\label{lm:card:tPD:n:PD}
Let $D_0 >9$ be a (1,2)-primitive discriminant. For any $f\in \N, \; f \geq 1$,
let $\tilde{\Pc}_{f^2D_0}$ be the set of quadruples of integers $(a,b,d,e)$ such that 
$$
f^2D_0=e^2+8ad, \; a >0, \; d >0, \; 0 \leq b < a.
$$
Then we have	
\begin{equation}\label{eq:card:tPD:n:PD}
	\#\tilde{\Pc}_{f^2D_0} =\sum_{r \, | \, f}\#\Pc_{r^2D_0}.
\end{equation}
\end{Lemma}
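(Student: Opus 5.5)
Partition $\tilde{\Pc}_{f^2D_0}$ according to $g:=\gcd(a,b,d,e)$, and show that the part with a given $g$ is in bijection with $\Pc_{(f/g)^2D_0}$.

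Take $(a,b,d,e)\in\tilde{\Pc}_{f^2D_0}$ with $g=\gcd(a,b,d,e)$. Then $g^2\mid e^2+8ad=f^2D_0$. Write $a=ga'$, $b=gb'$, $d=gd'$, $e=ge'$. Then
$$
\frac{f^2D_0}{g^2}=e'^2+8a'd', \quad a'>0,\; d'>0,\; 0\le b'<a',\; \gcd(a',b',d',e')=1 .
$$
So $(a',b',d',e')\in\Pc_{f^2D_0/g^2}$, provided we check that $g$ divides $f$.

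To show $g\mid f$, note that $D':=f^2D_0/g^2=e'^2+8a'd'$ is a positive integer with $D'\equiv e'^2 \, [8]$. Hence $D'\equiv 0,1,4\,[8]$, and $D'$ is a discriminant of the type considered. Let $p$ be a prime and let $v_p$ denote the $p$-adic valuation.

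If $v_p(g)>v_p(f)$, then $p^2$ divides $D_0$ to an order forcing $D_0=p^2D''$ with $D''=D'\cdot(f/g)^{-2}p^{-2}\cdots$. More cleanly, write $h=g/\gcd(g,f)>1$. Then $h^2\mid D_0$ and $D_0/h^2=D'\gcd(g,f)^2/f^2$ is a rational number whose denominator is coprime to $h$.

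I would instead argue directly. Since $\gcd(g/\gcd(f,g), f/\gcd(f,g))=1$ and $f^2D_0=g^2D'$, we get $D_0=h^2D''$, where $D''=D'\cdot(\gcd(f,g)/f)^2\cdot\ldots$. The cleanest route: $f^2D_0=g^2D'$ with $D'\equiv 0,1,4\,[8]$, so $D_0=(g/f)^2D'$. Write $g/f=h/k$ in lowest terms. Then $k^2\mid D'$ and $D_0=h^2(D'/k^2)$.

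The main point is that $D'/k^2$ is again $\equiv 0,1,4\,[8]$. For odd $k$ this is clear, since dividing by an odd square preserves the residue mod 8 up to squares of units, which are $\equiv 1$. If $k$ is even, then $h$ is odd, and it is harmless to reduce $D_0$ instead. Then (1,2)-primitivity of $D_0$ forces $h=1$, i.e. $g\mid f$.

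This step is where primitivity is used, and I expect it to be the main obstacle. The clean form of the mod-8 bookkeeping: if $D_0=h^2D'''$ with $D'''$ an integer, one needs $D'''\equiv0,1,4\,[8]$. If $h$ is odd this follows because $h^2\equiv1\,[8]$ and $D_0\equiv D'''\cdot h^2$. If $h$ is even, one needs a short case check on $v_2$.

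Conversely, given $r\mid f$ and $(a',b',d',e')\in\Pc_{r^2D_0}$, set $g=f/r$. Multiplying by $g$ gives $(ga',gb',gd',ge')\in\tilde{\Pc}_{f^2D_0}$, and its gcd is exactly $g$. Both the bound $0\le gb'<ga'$ and the positivity conditions are preserved.

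These two maps are mutually inverse. Summing over $g\mid f$, equivalently over $r=f/g$, yields $\#\tilde{\Pc}_{f^2D_0}=\sum_{r\mid f}\#\Pc_{r^2D_0}$.
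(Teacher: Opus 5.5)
Your argument is correct and is essentially the paper's proof. You partition $\tilde{\Pc}_{f^2D_0}$ by $g=\gcd(a,b,d,e)$ and invert by scaling by $f/r$. You prove $g\mid f$ by writing $g/f=h/k$ in lowest terms and splitting on the parity of $k$; your $k$ is the paper's $f'$ and your $h$ is its $s'$. For $k$ even, $h$ is odd and $D_0/h^2\equiv D_0\equiv 0,1,4\,[8]$, since that congruence is part of the definition of (1,2)-primitive. For $k$ odd, $D'/k^2\equiv D'\equiv e'^2\,[8]$.

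Your closing remark that the case $h$ even needs a separate check on $v_2$ is unnecessary: $h$ even forces $k$ odd, which you had already handled. In a final write-up you should also remove the abandoned false starts.
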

 \begin{proof}
Consider $(a,b,d,e) \in \tilde{\Pc}_{f^2D_0}$. Let $s:=\gcd(a,b,d,e)$, and 
$$
	(a',b',d',e'):= \left(\frac{a}{s}, \frac{b}{s}, \frac{d}{s}, \frac{e}{s}\right).
	$$
	Since have $f^2D_0=e^2+8ad=s^2(e'{}^2+8a'd')$, it follows that $s^2 \, | \, f^2D_0$. We claim that $s \, | \, f$. Indeed, let $s':=s/\gcd(s,f)$ and $f'=f/\gcd(s,f)$. Then we have $s'{}^2 \, | \, f'{}^2D_0$, which implies that $s'{}^2 \, | \, D_0$. Assume that $s'>1$. If $s'$ is odd then 
	$$
	\frac{D_0}{s'{}^2} \equiv D_0 \; [8]
	$$ 
	which is a contradiction to the hypothesis that $D_0$ is $(1,2)$-primitive.
	Therefore $s'$ must be even, which implies that $f'$ is odd.
	But since
	$$
	f'{}^2\cdot\frac{D_0}{s'{}^2}=e'{}^2+8a'd'
	$$
	it follows that 
	$$
	\frac{D_0}{s'{}^2} \equiv e'{}^2+8a'd' \equiv 0,1,4 \; [8],
	$$
	and we have again a contradiction to the hypothesis that $D$ is $(1,2)$-primitive. Therefore we must have $s'=1$ that is $s \, | \, f$.
	Observe that we have $(a',b',d',e') \in \Pc_{(f/s)^2D_0}$.
	
	Conversely, for every $(a',b',d',e') \in \Pc_{r^2D_0}$, for some $r \, | \, f, \, r >0$. Let $s:= f/r$. Then $(sa',sb',sd',se')\in \tilde{\Pc}_{f^2D_0}$. We thus have a bijection between $\tilde{\Pc}_{f^2D_0}$ and the union $\bigsqcup_{r \, | \, f} \Pc_{r^2D_0}$, from which we get the desired conclusion. 
\end{proof}

\subsection*{Proof of Proposition~\ref{prop:sum:eul:char:QDp}}
\begin{proof}
	Let us write $D=f^2D_0$.  
	For each $e\in \Z$ such that $-\sqrt{D} < e < \sqrt{D}$ define
	$$
	\tilde{\Pc}_D(e):=\{(a,b,d)\in \N^3, \; (a,b,d,e) \in \tilde{\Pc}_D\}.
	$$
	One readily checks that $\#\tilde{\Pc}_D(e)=\sigma_1((D-e^2)/8)$.
	Thus 
	$$
	\#\tilde{\Pc}_{D} = \sum_{\substack{ -\sqrt{D} < e < \sqrt{D} \\ e^2 \equiv D \; [8]}}\sigma_1\left(\frac{D-e^2}{8}\right).
	$$
	On the other hand, from Proposition~\ref{prop:deg:proj:Q:D:p:to:mod:curv}, we have
	$$
	\chi(Q'_{r^2D_0})=\chi(\Hbb/\SL(2,\Z))\cdot \deg \pi_{1,r^2D_0} = \frac{-1}{6}\cdot\#\Pc_{r^2D_0}
	$$ 
	We then conclude by Lemma~\ref{lm:card:tPD:n:PD}.
\end{proof}

\section{Consequences of Bainbridge's formula}\label{sec:conseq:Bainbridge}
To prove our main results, it is essential to relate $\chi(X_{D})$ and $\chi(X_{D/4})$. To this purpose, we now prove

\begin{Proposition}\label{prop:conseq:Bainbridge}
	Let $D \in \N, \, D >9, \, D \equiv 0 \, [4]$, $D$ not a square. Assume that $D/4$ is a discriminant, that is $D/4 \equiv 0,1 \, [4]$. Then we have
	\begin{equation}\label{eq:ratio:H2D:n:H2D:4}
	\frac{\chi(X_D)}{\chi(X_{D/4})}= \left\{ 
	\begin{array}{cl}
		 6 & \hbox{ if $D/4 \equiv 1 \, [8]$} \\
		 10 & \hbox{ if $D/4 \equiv 5 \, [8]$} \\
		 8 & \hbox{ if $D/4 \equiv 0 \, [8]$}.  		 
	\end{array}
	\right.
	\end{equation} 
\end{Proposition}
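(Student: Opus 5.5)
We apply Bainbridge's formula \eqref{eq:eul:char:XD} directly. Write $D/4=f^2D_0$ with $D_0>1$ a fundamental discriminant, so that $D=(2f)^2D_0$. Then
$$
\frac{\chi(X_D)}{\chi(X_{D/4})}=8\cdot\frac{\sum_{r\,|\,2f}\left(\frac{D_0}{r}\right)\mu(r)r^{-2}}{\sum_{r\,|\,f}\left(\frac{D_0}{r}\right)\mu(r)r^{-2}}.
$$
Both sums are multiplicative Euler products over the primes dividing $2f$ or $f$, because only squarefree $r$ contribute. Hence
$$
\sum_{r\,|\,f}\left(\frac{D_0}{r}\right)\frac{\mu(r)}{r^{2}}=\prod_{p\,|\,f}\left(1-\left(\frac{D_0}{p}\right)p^{-2}\right).
$$
The two products differ only at the prime $2$. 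There are two cases.

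If $2\,|\,f$, the factor at $2$ appears in both products and cancels, so the ratio is $8$. In this case $D/4=f^2D_0$ with $4\,|\,f^2$. If $D_0\equiv 1\,[4]$ then $D/4\equiv 4D_0\cdot(f/2)^2\equiv 4\,[8]$ or $0\,[8]$. We must check that this is consistent with the hypothesis $D/4\equiv 0\,[8]$, and that conversely $D/4\equiv 0\,[8]$ forces $2\,|\,f$ or $D_0$ even. Note that $D/4\equiv 4\,[8]$ is excluded by the hypotheses of the statement.

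If $2\nmid f$, the ratio is $8\bigl(1-\left(\frac{D_0}{2}\right)/4\bigr)$. Three subcases arise.
\begin{itemize}
\item If $D_0$ is even, then $\left(\frac{D_0}{2}\right)=0$ and the ratio is $8$. Since $f$ is odd, $D/4\equiv D_0\,[8]$, and $D_0\equiv 0\,[8]$ or $4\,[8]$. So the case $D/4\equiv 0\,[8]$ gives $8$.
\item If $D_0\equiv 1\,[8]$, then $\left(\frac{D_0}{2}\right)=1$, the ratio is $8\cdot\frac34=6$, and $D/4\equiv D_0\equiv 1\,[8]$ because $f^2\equiv 1\,[8]$.
\item If $D_0\equiv 5\,[8]$, then $\left(\frac{D_0}{2}\right)=-1$, the ratio is $8\cdot\frac54=10$, and $D/4\equiv 5\,[8]$.
\end{itemize}

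The main obstacle is the bookkeeping that matches the congruence class of $D/4$ mod $8$ with the parity of $f$ and the class of $D_0$. If $D/4\equiv 1$ or $5\,[8]$, then $D/4$ is odd, so $f$ is odd and $D_0\equiv D/4\,[8]$, which yields $6$ or $10$. If $D/4\equiv 0\,[8]$, then either $f$ is even, giving ratio $8$, or $f$ is odd and $D_0$ is even, again giving $8$.

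The remaining class $D/4\equiv 4\,[8]$ would also give $8$ by the same computation, unless $f$ is even with $D_0\equiv 1\,[4]$. That subcase lies outside the statement and needs no treatment; the statement as written simply omits it. Finally, $D$ non-square ensures $D_0>1$, so Theorem~\ref{th:Bainb:E:char:XD} applies and $\zeta_{K_{D_0}}(-1)\neq 0$ cancels in the ratio.
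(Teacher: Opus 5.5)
Your proof is correct and is essentially the paper's argument. Bainbridge's formula reduces the ratio to $8$ times a ratio of divisor sums in which only the prime $2$ matters, and your Euler-product factorization with $D/4=f^2D_0$ is a tidier packaging of the paper's decomposition $D=4^sf^2D_0$ ($f$ odd) and its cases (a), (b), (c.1), (c.2).

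One correction to your closing remarks: there is no exceptional subcase ``$f$ even, $D_0\equiv 1\,[4]$''. Your own cancellation of the factor at $2$ gives ratio $8$ whenever $2\,|\,f$. Hence the ratio is $8$ for every $D/4\equiv 0\,[4]$, including $D/4\equiv 4\,[8]$, which the hypotheses do allow (the ``$0\,[8]$'' in the displayed conclusion should read ``$0\,[4]$''). This is what the paper actually proves in case (c) and later uses in Corollary~\ref{cor:eul:char:W2:D:W2:D:4:n:XDp}.
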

\begin{proof}
	We can write $D=4^sf^2D_0$, where $s,f\in \N$, $s\geq 1$, $f$ odd, and $D_0$ is a fundamental discriminant. Recall from \eqref{eq:eul:char:XD} that we have
	$$
	\chi(X_{D/4})= 2\cdot(2^{s-1}f)^3\cdot \zeta_{K_{D_0}}(-1) \sum_{r|2^{s-1}f}\left( \frac{D_0}{r}\right)\cdot \frac{\mu(r)}{r^2}
	$$
	and
	$$
	\chi(X_D)= 2\cdot(2^sf)^3\cdot \zeta_{K_{D_0}}(-1)\left(\sum_{r|2 ^{s-1}f}\left( \frac{D_0}{r}\right)\cdot \frac{\mu(r)}{r^2} + \sum_{r | f } \left(\frac{D_0}{2^sr}\right)\cdot \frac{\mu(2^sr)}{4^sr^2}\right).
	$$
	\begin{itemize}
		\item[(a)] If $D/4 \equiv 1  \, [8]$ then $s=1$ and $D_0\equiv 1 \, [8]$. Note that we have $\left(\frac{D_0}{2}\right)=1$ in this case. Therefore,
		\begin{align*}
			\chi(X_D) & = 8\cdot 2\cdot f^3\cdot\zeta_{K_{D_0}}(-1)\cdot \sum_{r \, | \, f}\left(  \left(\frac{D_0}{r}\right)\cdot\frac{\mu(r)}{r^2} + \left(\frac{D_0}{2r}\right)\cdot\frac{\mu(2r)}{4r^2}
			\right)\\
			& =   8\cdot 2\cdot f^3\cdot\zeta_{K_{D_0}}(-1)\cdot \sum_{r \, | \, f} \left( \left(\frac{D_0}{r}\right)\cdot\frac{\mu(r)}{r^2} -\frac{1}{4}\left(\frac{D_0}{r}\right)\cdot\frac{\mu(r)}{r^2}
			\right)\\
			& = 6 \cdot 2\cdot f^3\cdot\zeta_{K_{D_0}}(-1)\cdot \sum_{r \, | \, f} \left(\frac{D_0}{r}\right)\cdot\frac{\mu(r)}{r^2} \\
			&= 6 \chi(X_{D/4}).
		\end{align*} 
	
	\item[(b)] If $D/4 \equiv 5  \, [8]$ then $s=1$ and $D_0\equiv 5 \, [8]$. Note that we have $\left(\frac{D_0}{2}\right)=-1$ in this case. Therefore,
	\begin{align*}
		\chi(X_D) & = 8\cdot 2\cdot f^3\cdot\zeta_{K_{D_0}}(-1)\cdot \sum_{r \, | \, f}\left(  \left(\frac{D_0}{r}\right)\cdot\frac{\mu(r)}{r^2} + \left(\frac{D_0}{2r}\right)\cdot\frac{\mu(2r)}{4r^2}
		\right)\\
		& =   8\cdot 2\cdot f^3\cdot\zeta_{K_{D_0}}(-1)\cdot \sum_{r \, | \, f} \left( \left(\frac{D_0}{r}\right)\cdot\frac{\mu(r)}{r^2} +\frac{1}{4}\left(\frac{D_0}{r}\right)\cdot\frac{\mu(r)}{r^2}
		\right)\\
		& = 10 \cdot 2\cdot f^3\cdot\zeta_{K_{D_0}}(-1)\cdot \sum_{r \, | \, f} \left(\frac{D_0}{r}\right)\cdot\frac{\mu(r)}{r^2} \\
		&= 10 \chi(X_{D/4}).
	\end{align*} 
	
	\item[(c)] Suppose that $D/4 \equiv 0 \, [4]$. We have two subcases
	   \begin{itemize}
	   	\item[(c.1)] $D_0$ is odd, that is $D_0 \equiv 1 \, [4]$. In this case, $s \geq 2$. Since $\mu(2^sr)=0$ if $s\geq 2$, it follows
	   	$$
	   	\chi(X_D)  = 8\cdot 2\cdot (2^{s-1}f)^3\cdot\zeta_{K_{D_0}}(-1)\cdot \sum_{r \, | \, f}\left(  \left(\frac{D_0}{r}\right)\cdot\frac{\mu(r)}{r^2} + \left(\frac{D_0}{2r}\right)\cdot\frac{\mu(2r)}{4r^2}
	   		\right) = 8\chi(X_{D/4}).
	   	$$
	   	
	   	\item[(c.2)] $D_0$ is even, that is $D_0 \equiv 0 \, [4]$. In this case, $\left( \frac{D_0}{r}\right)=0$ for all $r$ even. Thus we have 
	   	$$
	   	\chi(X_D)  = 8\cdot 2\cdot (2^{s-1}f)^3\cdot\zeta_{K_{D_0}}(-1)\cdot \sum_{r \, | \, f}\left(\frac{D_0}{r}\right)\cdot\frac{\mu(r)}{r^2} = 8\chi(X_{D/4}).
	   	$$
	   \end{itemize}
	\end{itemize}
\end{proof} 

Recall that for all discriminant $D>4$, $W_D(2)$ is the Teichm\"uller curve generated by Prym eigenforms in the locus $\Omega E_D(2) \subset \Omega \Mcal_2(2)$. Equivalently, $W_D(2)$ is the projection of $\Omega E_D(2)$ in $\Pb\Omega\Mcal_2(2)$. Note that $W_D(2)$ has two components if $D\equiv 1 \, [8]$ and one component otherwise (see~\cite{McM:spin}).
\begin{Corollary}\label{cor:eul:char:W2:D:W2:D:4:n:XDp}
	For all $D >0, \; 4 \, | \, D$, define 
	$$
	b_D=\left\{ 
	\begin{array}{cl}
		0 & \hbox{ if $D/4 \equiv 2,3 \, [4]$} \\
		4 & \hbox{ if $D/4 \equiv 0 \, [4]$} \\
		3 & \hbox{ if $D/4 \equiv 1 \, [8]$} \\
		5 & \hbox{ if $D/4 \equiv 5 \, [8]$} \\
	\end{array}
	\right.
	$$
	Then we have
	\begin{equation}\label{eq:eul:char:W2:D:W2:D:4:n:XDp}
	\chi(W_D(2))+b_D\chi(W_{D/4}(2))=-\frac{9}{2}\cdot\chi(X'_D).
	\end{equation}
\end{Corollary}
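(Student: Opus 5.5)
Since the identity is linear in Euler characteristics, the plan is to express everything through $\chi(X_D)$ and $\chi(X_{D/4})$. First use Bainbridge's formula $\chi(W_{D'}(2))=-\frac{9}{2}\chi(X_{D'})$, applied both to $D'=D$ and to $D'=D/4$ whenever $D/4$ is a discriminant. This reduces \eqref{eq:eul:char:W2:D:W2:D:4:n:XDp} to the purely arithmetic identity
$$
\chi(X_D)+b_D\,\chi(X_{D/4})=\chi(X'_D).
$$
Next we compute the ratio $\chi(X'_D)/\chi(X_D)$ with Proposition~\ref{prop:eul:char:XD:p}, writing $D=f^2D_0$ with $D_0$ fundamental, and the ratio $\chi(X_D)/\chi(X_{D/4})$ with Proposition~\ref{prop:conseq:Bainbridge}. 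The proof then splits according to $D/4$ modulo $8$.

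\emph{Case $D/4\equiv 2,3\,[4]$.} Here $b_D=0$ and $D/4$ is not a discriminant. Since $D\equiv 8,12\,[16]$, $D$ is fundamental, so $f=1$. Proposition~\ref{prop:eul:char:XD:p} then gives $\chi(X'_D)=\chi(X_D)$, which is the required identity.

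\emph{Case $D/4\equiv 1\,[8]$.} By the proof of Proposition~\ref{prop:conseq:Bainbridge}, $D=4f^2D_0$ with $f$ odd, so $2$ divides the conductor $2f$. Hence $\chi(X'_D)=\frac32\chi(X_D)$ and $\chi(X_{D/4})=\frac16\chi(X_D)$. The left side is $\chi(X_D)(1+\frac{3}{6})=\frac32\chi(X_D)$, as required.

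\emph{Case $D/4\equiv 5\,[8]$.} Here $b_D=5$, the ratio is $10$, and the conductor is again even. The left side is $(1+\frac{5}{10})\chi(X_D)=\frac32\chi(X_D)$.

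\emph{Case $D/4\equiv 0\,[4]$.} Here $b_D=4$ and the ratio is $8$, so the left side is $\frac32\chi(X_D)$. It remains to check that the conductor is even. If $D_0$ is odd, then $16\mid D$ forces $4\mid f^2$. If $D_0\equiv 0\,[4]$, then $v_2(D_0)\in\{2,3\}$ together with $16\mid D$ forces $2\mid f$.

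The main point needing care is the bookkeeping in the hypotheses of Proposition~\ref{prop:eul:char:XD:p}. One must make sure that $D\equiv 0,1,4\,[8]$ holds in each case; note that $D\equiv 12\,[16]$ is $4\,[8]$, which is allowed. One must also correctly identify the parity of the conductor $f$ relative to the fundamental discriminant, especially in the subcase where $D_0$ is even. Once the parities are settled, each case is a one-line computation.
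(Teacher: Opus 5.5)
Your proposal follows the paper's proof: convert both $W$-terms with Bainbridge's $\chi(W_{D'}(2))=-\frac92\chi(X_{D'})$, then combine M\"oller's ratio (Proposition~\ref{prop:eul:char:XD:p}) with Proposition~\ref{prop:conseq:Bainbridge} case by case. Your cases $D/4\equiv 1,5\,[8]$ and $D/4\equiv 0\,[4]$ are correct, including the parity check on the conductor.

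There is one false step, in the case $D/4\equiv 2,3\,[4]$. You claim that $D\equiv 8,12\,[16]$ forces $D$ to be fundamental, so $f=1$. This fails in general, because $D/4$ need not be squarefree. For example, $D=72=3^2\cdot 8$ has $D/4=18\equiv 2\,[4]$ and conductor $f=3$. Likewise $D=108=3^2\cdot 12$ has $D/4=27\equiv 3\,[4]$ and $f=3$.

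What is true, and all you need for Proposition~\ref{prop:eul:char:XD:p}, is that $f$ is odd. If $2\mid f$, then $D/4=(f/2)^2D_0\equiv 0,1\,[4]$ would be a discriminant, contradicting $D/4\equiv 2,3\,[4]$. This is how the paper argues: it writes $D=4^sf^2D_0$ with $f$ odd and observes that $s=0$. With this replacement you still get $\chi(X'_D)=\chi(X_D)$, and the argument is complete.

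A minor citation point: the statement of Proposition~\ref{prop:conseq:Bainbridge} gives the ratio $8$ only for $D/4\equiv 0\,[8]$. Its proof, case (c), covers all $D/4\equiv 0\,[4]$, which is what you (and the paper) actually use when $D/4\equiv 4\,[8]$. You should cite it that way.
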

\begin{proof}
	It was shown in \cite{Bai:GT} that we have
	$$
	\chi(W_D(2))= -\frac{9}{2}\cdot\chi(X_D)
	$$
	Let us write $D=4^sf^2D_0$, where $s\in \N$, $f$ is odd, and $D_0$ is a fundamental discriminant.
	If $D/4 \equiv 2,3 \, [4]$ then $D/4$ is not a discriminant, which implies that $s=0$ and $4 \, | \, D_0$. Note that in this case $\chi(X_D)=\chi(X'_D)$ (cf. Theorem~\ref{prop:eul:char:XD:p}). Therefore, 
	$$
	\chi(W_D(2))=-\frac{9}{2}\cdot \chi(X_D) = -\frac{9}{2}\cdot \chi(X'_D).
	$$ 
	In the other cases $D/4$ is a discriminant, which implies that $s\geq 1$, and $\chi(X'_D)=\frac{3}{2}\cdot \chi(X_D)$. By Proposition~\ref{prop:conseq:Bainbridge}, we have
	\begin{align*}
		\chi(W_D(2))+b_D\chi(W_{D/4}(2)) & = -\frac{9}{2}\cdot\left(\chi(X_D)+b_D\chi(X_{D/4}) \right)\\
		& = -\frac{9}{2}\cdot\frac{3}{2}\cdot \chi(X_D)\\
		&=  -\frac{9}{2}\cdot\chi(X'_D).
	\end{align*}
\end{proof}

\section{Proof of Theorem~\ref{th:eul:char:PD:p:XD:p}}
By Proposition~\ref{prop:QDp:n:PDp}, \eqref{eq:eul:char:PD:p:XD:p} is equivalent to
\begin{equation}\label{eq:eul:char:QDp:n:XDp}
	\chi(Q'_D)= \left\{ 
	\begin{array}{cl}
		-2\chi(X'_D) &  \hbox{ if  $D\equiv 1 \, [8]$}\\
		-\chi(X'_D) & \hbox{ if $D\equiv 0 \, [4]$}.
	\end{array}	
	\right.
\end{equation}
In what follows, we will give the proof of \eqref{eq:eul:char:QDp:n:XDp}.

\subsection{Case $D \equiv 1 \, [8]$}
\begin{proof}
	In this case we can write $D=f^2D_0$, where $D_0\equiv 1 \, [8]$ is a fundamental discriminant. Note that $D_0$ is also $(1,2)$-primitive. Thus it follows from Proposition~\ref{prop:sum:eul:char:QDp} that we have
	$$
	-6\sum_{r \, | \, f} \chi(Q'_{r^2D_0})=\sum_{\substack{e^2 < D, \, e \, {\rm odd}}}\sigma_1\left(\frac{D-e^2}{8}\right).
	$$ 
	By Theorem~\ref{th:rel:sum:sigma:1}, we have
	$$
	\sum_{\substack{e^2 < D, \,  e \, {\rm odd}}}\sigma_1\left(\frac{D-e^2}{8}\right) = \frac{2}{5}\cdot \sum_{\substack{e^2 < D, \,  e  \, {\rm odd}}} \sigma_1\left(\frac{D-e^2}{4}\right).
	$$
	From Corollary~\ref{cor:sum:eul:char:XD}, we have
	$$
	30\sum_{r \, | \, f}\chi(X_{r^2D_0}) = \sum_{\substack{e^2 < D, \,  e \, {\rm odd}}}\sigma_1\left(\frac{D-e^2}{4}\right)
	$$
	which implies
	$$
	\sum_{r \, | \, f} \chi(Q'_{r^2D_0}) = -2\sum_{r \, | \, f}\chi(X_{r^2D_0}).
	$$
	By induction on $f$, we get the desired conclusion. 
\end{proof}

\subsection{Case $D\equiv 0 \, [4]$}
\begin{proof} 
Let us write  $D=4^sf^2D_0$, where $s\in \N$, $f$ is an odd number, and $D_0>1$ is a fundamental discriminant. 

\subsubsection{Case $s=0$} In this case $D=f^2D_0$, where $4 \, | \, D_0$, but $D_0/4$ is not a discriminant (that is $D_0/4 \equiv 2,3 \,[4]$). Thus we also have $D/4 \equiv 2,3 \, [4]$.  It follows from  Theorem~\ref{th:rel:sum:sigma:1} that we have
$$
\sum_{\substack{e^2 \equiv D \, [8]}}\sigma_1(\frac{D-e^2}{8}) = \frac{1}{5}\cdot \sum_{\substack{e^2 \equiv D \, [4]}}\sigma_1(\frac{D-e^2}{4})
$$
Since in this case $D_0$ is also $(1,2)$-primitive, by Proposition~\ref{prop:sum:eul:char:QDp}, we have
$$
-6\cdot \sum_{r \, | \, f}\chi(Q'_{r^2D_0}) = \sum_{\substack{e^2 \equiv D \, [8]}}\sigma_1\left( \frac{D-e^2}{8}\right)
$$
Combining with \eqref{eq:sum:eul:char:XD}, we get
$$
\sum_{r \, | \, f}\chi(Q'_{r^2D_0}) = -\sum_{r \, | \, f}\chi(X_{r^2D_0}).
$$
By induction on $f$, we conclude that 
$$
\chi(Q'_D)=-\chi(X_D)=-\chi(X'_D)
$$.

\subsubsection{Case $s=1$} In this case $D/4=f^2D_0$ is a discriminant. Thus Theorem~\ref{th:rel:sum:sigma:1} gives
$$
\sum_{\substack{e^2 \equiv D \, [8]}}\sigma_1(\frac{D-e^2}{8}) = \frac{1}{5}\cdot \sum_{\substack{e \, {\rm even}}}\sigma_1(\frac{D-e^2}{4}) + \frac{4}{5}\cdot \sum_{\substack{e^2 \equiv D/4 \, [4]}}\sigma_1(\frac{D/4-e^2}{4}). 
$$  
By Corollary~\ref{cor:sum:eul:char:XD}, we have
$$
\sum_{\substack{e \, {\rm even}}}\sigma_1(\frac{D-e^2}{4}) = 30\cdot \left(\sum_{r\, | \, f} \chi(X_{r^2D_0}) + \sum_{r\, | \, f} \chi(X_{4r^2D_0})\right) 
$$
and 
$$
\sum_{\substack{e^2 \equiv D/4 \, [4]}}\sigma_1(\frac{D/4-e^2}{4}) = 30\cdot \sum_{r\, | \, f} \chi(X_{r^2D_0}).
$$
Thus
$$
\sum_{\substack{e^2 \equiv D \, [8]}}\sigma_1(\frac{D-e^2}{8}) = 6\cdot\left(5\cdot\sum_{r\, | \, f} \chi(X_{r^2D_0}) + \sum_{r\, | \, f} \chi(X_{4r^2D_0})  \right). 
$$
\begin{itemize}
	\item[$\bullet$] If $D_0 \equiv 1 \, [8]$, then $D_0$ is $(1,2)$-primitive. It then follows from Proposition~\ref{prop:sum:eul:char:QDp} that we have
	$$
	-\left(\sum_{r \, | \, f}\chi(Q'_{r^2D_0})+ \sum_{r \, | \, f}\chi(Q'_{4r^2D_0}) \right) = 5\cdot\sum_{r\, | \, f} \chi(X_{r^2D_0}) + \sum_{r\, | \, f} \chi(X_{4r^2D_0}).
	$$
	From case $D \equiv  1 \, [8]$, we know that
	$$
	\sum_{r \, | \, f}\chi(Q'_{r^2D_0}) = -2\sum_{r\, | \, f} \chi(X_{r^2D_0}).
	$$
	Therefore
	$$
	-\sum_{r \, | \, f}\chi(Q'_{4r^2D_0})= 3\cdot\sum_{r\, | \, f} \chi(X_{r^2D_0}) + \sum_{r\, | \, f} \chi(X_{4r^2D_0})
	$$
	By Proposition~\ref{prop:conseq:Bainbridge} we have $\chi(X_{4r^2D_0}) = 6\chi(X_{r^2D_0})$ for all $r$ odd. Thus we have
	$$
	-\sum_{r \, | \, f}\chi(Q'_{4r^2D_0}) = \frac{3}{2}\cdot \sum_{r\, | \, f} \chi(X_{4r^2D_0}) =\sum_{r\, | \, f} \chi(X'_{4r^2D_0}).
	$$ 
	By induction on $f$, we conclude that $\chi(Q'_D)=-\chi(X'_D)$.
	
\item[$\bullet$] If $D_0 \equiv 0 \, [4]$, then $D_0$ is $(1,2)$-primitive. We also have  
$$
-\left(\sum_{r \, | \, f}\chi(Q'_{r^2D_0})+ \sum_{r \, | \, f}\chi(Q'_{4r^2D_0}) \right) = 5\cdot\sum_{r\, | \, f} \chi(X_{r^2D_0}) + \sum_{r\, | \, f} \chi(X_{4r^2D_0}).
$$
We have shown that $\chi(Q'_{r^2D_0}) = -\chi(X_{r^2D_0})$ for all $r$ odd (case $s=0$). Thus we obtain
$$
-\sum_{r \, | \, f}\chi(Q'_{4r^2D_0})= 4\cdot\sum_{r\, | \, f} \chi(X_{r^2D_0}) + \sum_{r\, | \, f} \chi(X_{4r^2D_0}).
$$	
In this case, we have $\chi(X_{4r^2D_0})=8\chi(X_{r^2D_0})$ by Proposition~\ref{prop:conseq:Bainbridge}. It follows that
$$
-\sum_{r \, | \, f}\chi(Q'_{4r^2D_0})= \frac{3}{2}\cdot\sum_{r\, | \, f} \chi(X_{4r^2D_0}) = \sum_{r\, | \, f} \chi(X'_{4r^2D_0})
$$
and therefore $\chi(Q'_D)=\chi(X'_D)$.

\item[$\bullet$] If $D_0 \equiv 5 \, [8]$ then $D_0$ is not $(1,2)$-primitive, but $4D_0$ is. Thus Proposition~\ref{prop:sum:eul:char:QDp} gives
$$
-\sum_{r \, | \, f}\chi(Q'_{4r^2D_0}) = 5\cdot\sum_{r\, | \, f} \chi(X_{r^2D_0}) + \sum_{r\, | \, f} \chi(X_{4r^2D_0}).
$$  
By Proposition~\ref{prop:conseq:Bainbridge}, $\chi(X_{4r^2D_0}) = 10 \chi(X_{r^2D_0})$ for all $r$ odd. Thus we have
$$
-\sum_{r \, | \, f}\chi(Q'_{4r^2D_0}) = \frac{3}{2}\cdot \sum_{r\, | \, f} \chi(X_{4r^2D_0}) =\sum_{r\, | \, f} \chi(X'_{4r^2D_0})
$$
which implies $\chi(Q'_D)=-\chi(X'_D)$.
\end{itemize}

\subsubsection{Case $s\geq 2$}
From Proposition~\ref{prop:sum:eul:char:QDp} we have
$$
-\sum_{i=0}^s\sum_{r \, | \, f} \chi(Q'_{4^ir^2D_0}) = 5\sum_{i=0}^{s-1}\sum_{r \, | \, f} \chi(X_{4^ir^2D_0}) + \sum_{r\, | \, f}\chi(X_{4^sr^2D_0})
$$
if $D_0 \equiv 0,1,4 \; [8]$, and
$$
-\sum_{i=1}^s\sum_{r \, | \, f} \chi(Q'_{4^ir^2D_0}) = 5\sum_{i=0}^{s-1}\sum_{r \, | \, f} \chi(X_{4^ir^2D_0}) + \sum_{r\, | \, f}\chi(X_{4^sr^2D_0})
$$
if $D_0 \equiv 5 \, [8]$. 
In both cases, by induction on $s$, we have
\[ 
-\sum_{r \, | \, f} \chi(Q'_{4^sr^2D_0})  = 4\sum_{r \, | \, f} \chi(X_{4^{s-1}r^2D_0}) + \sum_{r\, | \, f}\chi(X_{4^sr^2D_0}) 
 = \frac{3}{2}\cdot \sum_{r\, | \, f}\chi(X_{4^sr^2D_0})
 =  \sum_{r\, | \, f}\chi(X'_{4^sr^2D_0})
\]
which implies $\chi(Q'_D)= -\chi(X'_D)$. This completes the proof of Theorem~\ref{th:eul:char:PD:p:XD:p}.
\end{proof}

\section{Proof of Theorem~\ref{th:SV:const}}
\begin{proof}
	A triple of tori is the data of a tuple $(X,x_0,x_1,x_2,\omega)$, where
	\begin{itemize}
		\item[$\bullet$] $X$ is a disjoint union of three elliptic curves $X_0, X_1, X_2$.
		
		\item[$\bullet$] $x_j$ is a marked point in $X_j$.

		\item[$\bullet$] $\omega$ is a holomorphic Abelian differential on $X$, the restriction of $\omega$ to $X_j$ will be denoted by $\omega_j$. 
	\end{itemize}
	A triple of tori $(X,x_0,x_1,x_2,\omega)$ is called a {\em Prym form} if there exists an involution $\tau: X \to X$ which fixes $x_0$, permutes $x_1$ and $x_2$, and satisfies $\tau^*\omega=-\omega$. 
	Define $H_1(X,\Z)^-:=\{c\in H_1(X,\Z), \, \tau_*c = -c\}$. 
	Let $(\alpha_j,\beta_j)$ be a symplectic basis of $H_1(X_j,\Z), \; j=0,1,2$. We suppose that $\tau_*\alpha_1 =\alpha_2, \tau_*\beta_1=\beta_2$. Then $(\alpha_0,\beta_0,\alpha_1+\alpha_2,\beta_1+\beta_2)$ is a basis of $H_1(X,\Z)^-$. The restriction of the intersection form on $H_1(X,\Z)$ to $H_1(X,\Z)^-$ is given by the matrix $\left(\begin{smallmatrix}
		J & 0 \\ 0  & 2J 	\end{smallmatrix} \right)$, where $J=\left(\begin{smallmatrix} 0 & 1 \\ -1 & 0 \end{smallmatrix} \right)$.
	
	Let $\Omega(X)^-:=\ker(\tau^*+\id) \subset \Omega(X)$.
	The Prym variety of $(X,\tau)$ is defined to be
	$$
	\Prym(X,\tau):=(\Omega(X)^-)^*/H_1(X,\Z)^-.
	$$  
	It follows from the construction that $\Prym(X,\tau)$ is an Abelian surface whose polarization is of type $(1,2)$. Note that $\Prym(X,\tau)$ is in fact isomorphic to $X_0\times X_1$. 
	
	Since $\omega \in \Omega(X)^-$, we can view $\omega$ as an element of $\Omega^1(\Prym(X,\tau))$. By definition, $\Omega E_D(0^3)$ is the moduli space of triples of tori $(X,x_0,x_1,x_2,\omega)$ that are Prym forms such that $\Prym(X,\tau)$ admits a real multiplication by $\Oc_D$ for which $\omega$ is an eigenform. The curve $W_D(0^3)$ is the projectivization of $\Omega E_D(0^3)$ that is $W_D(0^3)=\Omega E_D(0^3)/\C^*$.
	The correspondence 
	$$
	(X,x_0,x_1,x_2,\omega) \mapsto (\Prym(X,\tau), \omega)
	$$
	provides us with a map from $\Omega E_D(0^3)$ to $\Omega Q'_D$, which is in fact an isomorphism. It follows that $W_D(0^3) \simeq Q'_D$. Hence
	$$
	\chi(W_D(0^3)) = \left\{ 
	\begin{array}{ll}
		-2\chi(X'_D) & \hbox{ if $D\equiv 1 \, [8]$} \\
		-\chi(X'_D) & \hbox{ if $4 \, | \, D$}.
	\end{array}
	\right.
	$$
	by Theorem~\ref{th:eul:char:PD:p:XD:p} and Proposition~\ref{prop:QDp:n:PDp}. By Corollary~\ref{cor:eul:char:W2:D:W2:D:4:n:XDp}, we have that 
	$$
	\chi(W_D(2))+b_D\chi(W_{D/4}(2))=-\frac{9}{2}\cdot\chi(X'_D)
	$$ 
	if $4 \, | \, D$. In case $D\equiv 1 \, [8]$, by \cite[Th. 1.1]{Bai:GT} and Proposition~\ref{prop:eul:char:XD:p}, we also have
	$$
	\chi(W_D(2)) = -\frac{9}{2}\chi(X_D)=-\frac{9}{2}\chi(X'_D). 
	$$
	Finally, from \cite[Th. 0.2]{Mo14}, we have
	$$
	\chi(W_D(4))=\left\{ 
	\begin{array}{ll}
		\frac{-5}{2}\cdot \chi(X'_D) & \hbox{ if $4 \, | \, D$} \\
		-5\cdot \chi(X'_D) & \hbox{ if $D \equiv 1 \, [8]$}
	\end{array} 
	\right. 
	$$
	Plugging those expressions of $\chi(W_D(4)), \chi(W_D(2)+b_D\chi(W_{D/4}(2)), \chi(W_D(0^3))$  to the formulas in Theorem~\ref{th:Siegel:Veech}, we get that
	$$
	c_1(D)=\frac{25}{9}, \quad c_2(D)=3, \quad c_3(D)=\frac{2}{9}  
	$$
	for all $D$. 
\end{proof}

\appendix
\section{Siegel-Veech constants of the locus $\tilde{\Qc}(4,-1^4)$}\label{sec:SV:quad:cov}
In this section, we give the  proof of the following
\begin{Theorem}\label{th:SV:constants:Prym:locus}
	Let $\tilde{\Qcal}(4,-1^4)$ denote the Prym locus in $\Omega \Mcal_3(2,2)^\odd$, that is the space of pairs $(X,\omega) \in \Omega \Mcal_3(2,2)^\odd$, where  $X$ admits an involution $\tau$ with four fixed points such that $\tau^*\omega=-\omega$. 
	For $k=1,2,3$, denote by $\tilde{c}_k(4,-1^4)$ denote the Siegel-Veech constant associated to saddle connections on $X$ joining the two zeros of the $\omega$ with multiplicity $k$. Then we have
	$$
	\tilde{c}_1(4,-1^4) = \frac{25}{9}, \quad  \tilde{c}_2(4,-1^4) = 3, \quad \tilde{c}_3(4,-1^4)=\frac{2}{9}.
	$$
\end{Theorem}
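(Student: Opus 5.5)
We plan to follow the Eskin--Masur--Zorich method, as announced in the introduction. The Siegel--Veech constant $\tilde c_k(4,-1^4)$ for saddle connections of multiplicity $k$ joining the two zeros is obtained by collapsing such a saddle connection, which produces a boundary stratum, and comparing Masur--Veech volumes. Concretely,
$$\tilde c_k = \lim_{\varepsilon\to 0}\frac{1}{\pi\varepsilon^2}\cdot\frac{\vol\bigl(\text{$\varepsilon$-neighbourhood of the configuration}\bigr)}{\vol(\tilde{\Qcal}(4,-1^4))}.$$
On the level of the quadratic differentials downstairs in $\Qcal(4,-1^4)$ (genus $0$), a saddle connection joining the two double zeros of $\omega$ on the double cover corresponds, up to $\tau$, to one of three configurations. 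First, for $k=1$, a saddle connection from the zero of order $4$ to itself bounding a disc that contains exactly two poles, i.e.\ a pair of saddle connections exchanged by $\tau$ that are not homologous. Second, for $k=2$, a loop that lifts to two homologous saddle connections bounding a cylinder in the cover; downstairs this is a cylinder pinched against two poles. Third, for $k=3$, the degenerate situation where the lifts of three parallel segments through the poles are homologous. Equivalently, one can classify them via the degenerations described in the paper: collapsing to $\Omega\Mcal_3(4)$ (the $W_D(4)$ boundary), to $\Omega\Mcal_2(2)\sqcup$ torus (the $W_D(2)$ boundary), and to the triple of tori (the $W_D(0^3)$ boundary). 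This matches the numerators $15\chi(W_D(4))$, $9\chi(W_D(2))$ and $3\chi(W_D(0^3))$ of Theorem~\ref{th:Siegel:Veech}. I will use this correspondence as a guide to the combinatorial constants.

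The steps, in order, are as follows. (1) For each configuration, write the principal boundary stratum: $\Qcal(2,-1^2)$ glued with the remaining poles for $k=1$; a product $\Qcal(1,-1^5)$-type or genus-zero strata times a cylinder for $k=2$; and products of $\Qcal(-1^4)$ pillowcases for $k=3$. (2) Apply the EMZ/Athreya--Eskin--Zorich formula: the contribution equals a combinatorial factor (number of ways to glue and the choice of which poles fall in the disc, $\binom{4}{2}$-type counts), times a dimension factor of the form $\frac{(d_1-1)!(d_2-1)!}{(d-2)!}$ with $d=\dim_\C$, times the product of the volumes of the boundary strata, divided by $\vol\Qcal(4,-1^4)$. (3) Insert the known genus-zero volumes from Athreya--Eskin--Zorich, $\vol\Qcal(d_1,\dots,d_n)=2\pi^2\prod v(d_i)$ with $v(0)=2$, $v(-1)=1$ and $v(n)=\frac{n!!}{(n+1)!!}\pi^{n}$ (suitably normalized), together with the factor relating volumes of the orienting double cover locus to those of the quadratic stratum. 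The constant for the lifted locus is the downstairs constant rescaled by the factor accounting for lengths doubling and for the two lifts of each saddle connection.

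The main obstacle is step (2) for $k=2$ and $k=3$. There the "thick" part of the boundary involves cylinders and homologous saddle connections, so the EMZ formula acquires the extra $\zeta$-type cylinder factors and careful symmetry counts (the hyperelliptic-type involution on pillowcases and $\tau$ swapping components). I would first compute $\tilde c_1$, which is the cleanest case, and then fix the normalizations by checking the resulting sum. As a consistency check, one can use the Siegel--Veech/Lyapunov relation $c_{\rm area}$ together with the known sum of Lyapunov exponents for $\Qcal(4,-1^4)$ (Eskin--Kontsevich--Zorich). Alternatively, one can check against the limit $D\to\infty$ of Theorem~\ref{th:Siegel:Veech}, whose ratios are forced to be $15:9:3$ weighted by $\tfrac52,\tfrac92,1$ relative to $\chi(X'_D)$; this predicts exactly $25/9$, $3$ and $2/9$.
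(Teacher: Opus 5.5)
Your framework, the Eskin--Masur--Zorich formula $\tilde c_k=\frac12\,\alpha\,\vol'_1(\Omega_1\Mcal)/\vol_1(\tilde{\Qcal}_1(4,-1^4))$, is the one the paper uses. The problem is that the configuration analysis, on which every constant depends, is wrong.

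First, $\Qcal(4,-1^4)$ is not a genus-zero stratum. Its orders sum to $0=4g-4$, so it lives in genus one, and so does $\Qcal(3,-1^3)$. The Athreya--Eskin--Zorich product formula therefore gives neither volume. The paper uses the genus-one values $\vol_1\Qcal_1(4,-1^4)=2\pi^4$ and $\vol_1\Qcal_1(3,-1^3)=5\pi^4/9$, divided by $4!$ and $3!$ for unlabelled poles. Only $\Qcal(1,-1^5)$, of volume $\pi^4$, is genus zero.

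Second, the locus is modelled on $H^1(X,Z;\C)^-$. So a saddle connection $s$ from $z_1$ to $z_2$ and the reverse of $\tau(s)$ always have the same period. A $\tau$-exchanged pair is therefore automatically a rigid family of size two, and $k=1$ forces $\tau(s)=s$. Downstairs, $s$ then joins the zero of order $4$ to a pole, and collapsing it gives $\tilde{\Qcal}(3,-1^3)\subset\Omega\Mcal_3(4)$ with $\alpha=5$, not a $\Qcal(2,-1^2)$-type boundary.

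Your downstairs loops have a more basic defect. The loop bounding a disc with two poles (your $k=1$), and the cylinder ending at two poles (your $k=2$), both have trivial holonomy. They lift to closed loops at $z_1$ and at $z_2$, so they do not join the two zeros at all. In fact no cylinders, and hence no $\zeta$-type factors, occur in any of the three configurations.

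For $k=2$, the orbit $s_1\cup s_2$ is a non-separating closed curve. Its annular neighbourhood collapses to discs around the zero $w_0$ and a Weierstrass point $w$ of a single genus-two surface. The boundary is $\Omega\Mcal_2(2)^*$, a degree-$5$ cover of $\Omega\Mcal_2(2)$, with $\alpha=3$. It is not $\Omega\Mcal_2(2)\sqcup T$: a separating version would either give $\tau$ eight fixed points or put the genus-two piece over the empty stratum $\Qcal(1,-1)$. For $k=3$, the boundary is the space of triples of tori, $\cong\Omega\Mcal_1(0)\times\Omega\Mcal_1(0)$, with $\alpha=1$. Your coarse matching of $k=1,3$ with $W_D(4)$ and $W_D(0^3)$ is right, but it contradicts your own downstairs pictures and your step (1).

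Even with the right strata, the values are decided by normalisation constants that you leave open, and fixing them ``by checking the resulting sum'' is circular. The paper computes each of them:
\begin{itemize}
\item The factor $\frac12$ appears because the two zeros are unlabelled.
\item For $k=2$, $d\vol'=4\,d\vol^{(2)}$ (Claim~\ref{clm:ratio:dvols:on:H:2:s}). The image of $H_1(X,\Z)^-$ has index two in $H_1(Y,\Z)$, since $\varphi(\alpha_1)/2$ belongs to a symplectic basis. Together with the degree $5$ this gives $\vol'_1=\pi^4/6$.
\item For $k=3$, $d\vol'=2^4\,d\vol^*$, since $\xi(\alpha_1+\alpha_2)=2\xi(\alpha_1)$. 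Because the area is $\Aa(Y_0)+2\Aa(Y_1)$, the weighted cone Lemma~\ref{lm:compare:vols:prod:sp} with $a=1$, $b=2$ produces the factor $\frac{2!\,2!}{2^2\cdot 4!}$, hence $\vol^*_1=\mu_1^2/48$.
\end{itemize}
A generic dimension factor $\frac{(d_1-1)!(d_2-1)!}{(d-2)!}$ sees neither the $2^4$ nor the $2^{-2}$. The Lyapunov check is not available either: the Eskin--Kontsevich--Zorich formula involves the cylinder constant $c_{\mathrm{area}}$, not $c_1,c_2,c_3$.

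Your final remark is the only part that actually produces the numbers. Theorem~\ref{th:SV:const}, whose proof does not use the appendix, together with the convergence $c_k(D)\to\tilde c_k(4,-1^4)$ quoted in the introduction, does give the statement. But then all the content lies in the main body, notably $\chi(W_D(0^3))=-\chi(X'_D)$ (resp.\ $-2\chi(X'_D)$) from the modular-form identities, plus a justification that Siegel--Veech constants pass to the limit. It is not the independent EMZ computation the appendix supplies, and that independence is what makes the agreement with Theorem~\ref{th:SV:const} a genuine cross-check.
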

Let us first describe  briefly the strategy to compute  $\tilde{c}_k(4,-1^4)$ following Eskin-Masur-Zorich~\cite{EMZ03}.
Let $\Omega \Xcal$ be an affine invariant suborbifold in some stratum of translation surfaces. Consider a surface $(X,\omega) \in \Omega\Xcal$.
A {\em rigid family} of saddle connections on $(X,\omega)$ with respect to $\Omega\Xcal$ is a maximal collection of saddle connections  that have the same  periods on all surfaces in a neighborhood of $(X,\omega)$ in $\Omega\Xcal$.  We will be only  interested in saddle connections joining the two zeros of the $1$-forms in $\tilde{\Qcal}(4,-1^4)$. In this case a rigid family contains one, two, or  three saddle connections.

\medskip 

Let us fix $k\in \{1,2,3\}$. Let $\Omega \Mcal$ denote the space of Abelian differentials that are obtained by collapsing a rigid family of $k$ saddle connections (joining the two zeros of the $1$-form) on surfaces in $\tilde{\Qcal}(4,-1^4)$. 
For all $\delta\in\R_{>0}$ denote by $D'(\delta)$ the puncture disc $\{z\in \C, \; 0 < |z| < \delta\}$. 
For all $(Y,\eta) \in \Omega \Mcal$ there is  an  embedding $\Psi: U\times D'(\delta)/\Z_2 \to \tilde{\Qcal}(4,-1^4)$ for some $\delta>0$, where $U$ is a neighborhood of $(Y,\eta)$ in $\Omega\Mcal$, and the action of $\Z/2$ is generated by $(Y,\eta,z) \mapsto (Y,\eta,-z)$. 

Let $d\vol$ denote the Masur-Smillie-Veech measure on $\tilde{\Qcal}(4,-1^4)$. 
Then the pullback of $d\vol$  to $U\times D'(\delta)$ is decomposed as $d\vol'\otimes d\nu$, where 
\begin{itemize}
	\item[$\bullet$] $d\vol'$ is the restriction to $U$ of a volume form on $\Omega\Mcal$ that is proportional to the Lebesgue measure in local charts by period mappings,
	
	\item[$\bullet$] $d\nu$ is the pullback of the Lebesgue measure on $\C$ by a map of the form $z\to z^{\alpha}$. 
\end{itemize}
Let $\tilde{\Qcal}_1(4,-1^4)$ (resp. $\tilde{\Qcal}_{\leq 1}(4,-1^4)$)  denote the set of translation surfaces of unit area (resp. with area at most $1$) in $\tilde{\Qcal}(4,-1^4)$.  We define the subsets $\Omega_1\Mcal$ and $\Omega_{\leq 1}\Mcal$ of $\Omega\Mcal$ in the same way.
The volumes forms $d\vol$ and $d\vol'$ induce some volume forms $d\vol_1$ and $d\vol'_1$ on $\tilde{Q}_1(4,-1^4)$ and on $\Omega_1\Mcal$ respectively.
Note that $d\vol_1$ and $d\vol'_1$ are normalized  as follows
$$
\vol_1(\tilde{\Qcal}_1(4,-1^4))=\dim_\R \tilde{\Qcal}(4,-1^4)) \cdot \vol(\tilde{\Qcal}_{\leq 1}(4,-1^4))= 10\cdot \vol(\tilde{\Qcal}_{\leq 1}(4,-1^4))
$$ 
and
$$
\vol'_1(\Omega_1\Mcal)=\dim_\R \Omega\Mcal\cdot\vol'(\Omega_{\leq 1}\Mcal)=8\cdot \vol'(\Omega_{\leq 1}\Mcal). 
$$
It follows from the results of \cite{EMZ03} that we have
\begin{equation}\label{eq:SV:const:Q:4-1:4:k}
\tilde{c}_k(4,-1^4)=\frac{1}{2}\cdot \alpha \cdot\frac{\vol'_1(\Omega_1\Mcal)}{\vol_1(\tilde{\Qcal}_1(4,-1^4))}
\end{equation}
Note that the factor $\frac{1}{2}$ is introduced because the two zeros of the $1$-forms in $\tilde{\Qcal}(4,-1^4)$ cannot be distinguished. 
Since the preimages of the poles are not numbered in $\tilde{\Qcal}(4,-1^4)$ we have (cf. \cite[App. A]{Gouj:volumes})
$$
\vol_1(\tilde{\Qcal}_1(4,-1^4))=\frac{\vol_1(\Qcal_1(4,-1)^4)}{4!}= \frac{2\pi^4}{4!}=\frac{\pi^4}{12}.
$$

\subsection{Case $k=1$} In this case $\Omega \Mcal$ is the space of canonical double covers of quadratic differentials in the stratum $\Qcal(3,-1^3)$. It follows from \cite[\textsection 8B]{LN:finite} that $\alpha=5$, and in this case $d\vol'$ is equal to the Masur-Smillie-Veech volume on $\tilde{\Qcal}(3,-1^3)$. Taking into account the numbering of the zeros and poles in  $\Qcal(3,-1^3)$, we have
$$
\vol'_1(\tilde{\Qcal}_1(3,-1^3)) =\frac{5\pi^4}{9\cdot 3!} =\frac{5\pi^4}{54}.
$$
Thus we have
$$
\tilde{c}_1(4,-1^4)=\frac{1}{2}\cdot 5 \cdot \frac{5\pi^4/54}{\pi^4/12} =\frac{25}{9}.
$$

\subsection{Case $k=2$} In this case $\alpha=3$ and $\Omega \Mcal$ is the space of triples $(Y,\eta,w)$, where $(Y,\eta)$ is a translation of surface in $\Omega\Mcal_2(2)$ and $w$ is a Weierstrass point  on $Y$ which is not the zero of $\eta$ (see \cite[\textsection 8C]{LN:finite} for more details).  Let us denote this space by $\Omega \Mcal_2(2)^*$.  By definition $\Omega\Mcal_2(2)^*$ is a covering of degree $5$ over $\Omega \Mcal_2(2)$ with the covering map $(Y,\eta,w) \mapsto (Y,\eta)$. Denote by $d\vol^{(2)}$ the pullback to $\Omega \Mcal_2(2)^*$ of the Masur-Smillie-Veech volume on $\Omega\Mcal_2(2)$. We claim that 
\begin{Claim}\label{clm:ratio:dvols:on:H:2:s} 
We have
\begin{equation}\label{eq:ratio:dvols:on:H:2:s}
d\vol'=4d\vol^{(2)}.
\end{equation}
\end{Claim}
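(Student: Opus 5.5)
The plan is to compare the two lattices that normalize the volume forms, working in explicit period coordinates near a boundary point. By definition, $d\vol$ on $\tilde{\Qcal}(4,-1^4)$ is Lebesgue measure in period coordinates given by $H^1_-(X,\Sigma;\C)$, where $\Sigma$ is the pair of zeros. It is normalized so that the lattice $H^1_-(X,\Sigma;\Z\oplus\imath\Z)$ has covolume one. Similarly, $d\vol^{(2)}$ is Lebesgue measure on $H^1(Y;\C)\simeq\C^4$, normalized by $H^1(Y;\Z\oplus\imath\Z)$. Both forms are constant in period charts, so the claim reduces to one identity: the lattice induced on $H^1(Y;\C)$ by $d\vol$ through the splitting $d\vol = d\vol'\otimes d\nu$ contains $H^1(Y;\Z\oplus\imath\Z)$ with index $4$. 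Concretely, I expect this index-$4$ sublattice to be the real index-$2$ doubling along one complex direction, which produces the real factor $4$.

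\textbf{Step 1: the surgery.} Following \cite[\textsection 8C]{LN:finite}, I will make explicit the embedding $\Psi$ as a surgery. Start from $(Y,\eta,w)\in\Omega\Mcal_2(2)^*$ and a small $z\in D'(\delta)$. Take the canonical double cover picture in which the Weierstrass point $w$ and the double zero of $\eta$ are both fixed by the hyperelliptic involution. Construct $(X,\omega)\in\tilde{\Qcal}(4,-1^4)$ containing a pair of parallel saddle connections of holonomy $\pm z$ joining the two zeros, exchanged by $\tau$. Collapsing this pair recovers $(Y,\eta)$, and the point where the collapse happens is recorded by $w$.

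\textbf{Step 2: a basis of $H_1^-(X,\Sigma;\Z)$.} From this model I will write down an explicit basis of $H_1^-(X,\Sigma;\Z)$, of rank $5$. It should consist of the class $s$ of one collapsing saddle connection, with $\omega$-period essentially $z$, together with four anti-invariant classes $c_1,\dots,c_4$ mapping onto a symplectic basis $a_1,b_1,a_2,b_2$ of $H_1(Y,\Z)$ as $z\to 0$. The key computation is the ratio $\int_{c_i}\omega\big/\int_{a_i\text{ or } b_i}\eta$. For classes whose representatives avoid the surgery region, the anti-invariant lift is $\gamma-\tau_*\gamma$, and its period is twice the period on $Y$. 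For a class passing through the slit at $w$, the lift is a single anti-invariant cycle, and its period equals the period on $Y$.

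\textbf{Step 3: the Jacobian.} I will then check that exactly one basis class of $H_1(Y)$ has period ratio $1$ while the other three have ratio $2$, or an equivalent arrangement after a unimodular change of basis. This gives the lattice index $4$ on the real $8$-dimensional space $H^1(Y;\C)$. I also have to verify that the remaining mixing with the $s$-coordinate is unimodular, namely triangular with $1$ on the diagonal, so that it does not contribute. The factor $\alpha=3$ in $d\nu$ comes from $z\mapsto z^\alpha$ and is already absorbed in $d\nu$, so it does not enter the comparison. Taking the determinant of the resulting $5\times 5$ change of coordinates then gives $d\vol = 4\,d\vol^{(2)}\otimes d\nu$, that is, $d\vol'=4\,d\vol^{(2)}$.

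\textbf{Main obstacle.} The hard part is Step 2. I need a clean combinatorial model of the surgery in which one can see precisely which cycles get doubled, and must rule out an index of $2$ or $8$ instead of $4$. Getting this wrong by a power of $2$ is easy, for instance by confusing $H^1_-$ normalized by $\Z\oplus\imath\Z$ with normalization by the image of $H^1(X,\Sigma;\Z\oplus\imath\Z)$ under projection to the anti-invariant part. To pin this down, I would first test the normalization on the $k=1$ case, where the answer $\tilde c_1=25/9$ is known to be consistent, and fix the convention there. As an independent check, I would then compare with the Siegel–Veech value $\tilde c_2=3$, which Theorem~\ref{th:SV:const} together with the Eskin–Mirzakhani–Mohammadi convergence already predicts.
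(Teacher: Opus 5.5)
Your reduction is the paper's: the relative direction $[s_1]$ splits off into $d\nu$, and everything comes down to the index of $\varphi(H_1(X,\Z)^-)$ in $H_1(Y,\Z)$. Your opening expectation, one complex direction doubled giving a real factor $4$, is also correct.

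\textbf{The gap is in Steps 2--3, which carry the whole content of the claim.} The arrangement you plan to verify is one basis class with ratio $1$ and three with ratio $2$. That would make $\varphi(H_1(X,\Z)^-)$ of index $2^3=8$ in $H_1(Y,\Z)$, hence index $64$ after tensoring with $\Z\oplus\imath\Z$, i.e.\ $d\vol'=64\,d\vol^{(2)}$. It would also give the intersection form on $H_1(X,\Z)^-$ discriminant $64$ instead of the discriminant $4$ of a $(1,2)$ form. The index is invariant under unimodular change of basis, so this cannot be ``equivalent'' to a single doubling. The truth is the reverse, as in the paper: $(\varphi(\alpha_1)/2,\varphi(\beta_1),\varphi(\alpha_2),\varphi(\beta_2))$ is a symplectic basis of $H_1(Y,\Z)$.

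\textbf{Why the Step 2 rule fails.} It treats $Y$ like a quotient of $X$, which it is not. The surgery identifies the complement of the closed curve $\sigma:=s_1\cup s_2$ in $X$ with the complement of $\{w,w_0\}$ in $Y$. So a closed curve $\gamma$ on $Y$ that avoids the surgery region is already a closed curve on $X$, and $\tau_*[\gamma]=-[\gamma]+m[\sigma]$ for some $m\in\Z$, with $[\sigma]$ $\tau$-invariant and $\omega(\sigma)=0$.
\begin{itemize}
\item If $m$ is even, $[\gamma]-\frac{m}{2}[\sigma]$ is anti-invariant with the same period (ratio $1$).
\item Only if $m$ is odd must you pass to $[\gamma]-\tau_*[\gamma]$ (ratio $2$).
\end{itemize}
There is also no separate family of anti-invariant cycles ``through the slit''. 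Every anti-invariant class has zero intersection with the invariant class $[\sigma]$, so it is carried by the complement.

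\textbf{How to close the gap.} Since $m \bmod 2$ is additive and vanishes on the loops around the slits, $\varphi(H_1(X,\Z)^-)$ is the kernel of a homomorphism $H_1(Y,\Z)\to\Z/2$. To get index exactly $2$ without any explicit model:
\begin{itemize}
\item $\varphi$ is injective, since its kernel on $[\sigma]^\perp$ is $\Z[\sigma]$, which meets $H_1(X,\Z)^-$ trivially.
\item $\varphi$ preserves intersection numbers.
\item The form on $H_1(X,\Z)^-$ has type $(1,2)$, hence discriminant $4$, while $H_1(Y,\Z)$ is unimodular.
\end{itemize}
So the index squared is $4$. Elementary divisors then give the paper's adapted basis and the factor $2^2=4$.

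\textbf{On fixing the normalization.} Do not fix it by calibrating against $\tilde c_1$ or $\tilde c_2$; that is at best a consistency check, since those values are what the appendix sets out to compute. The claim holds for $d\vol$ normalized by forms integral on $H_1(X,Z;\Z)^-$, as in the paper. If your $H^1_-(X,\Sigma;\Z\oplus\imath\Z)$ meant anti-invariant integral classes, the same discriminant $4$ would shrink $d\vol$, and hence $d\vol'$, by a factor $16$.
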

\begin{proof}
Consider a triple $(Y,\eta,w) \in \Omega\Mcal_2(2)^*$ which is obtained from a surface $(X,\omega) \in \tilde{\Qcal}(4,-1^4)$ by collapsing a pair of saddle connections $\{s_1,s_2\}$ that are exchanged by the Prym involution $\tau$. 
Recall that $\tilde{\Qcal}(4,-1^4)$ is locally modeled on $H^1(X,Z;\C)^-$, where $Z$ is the zero set of $\omega$. The volume form $d\vol$ is normalized such that the co-volume of the lattice 
$$
\{f \in H^1(X,Z;\C)^-, \; f(c)\in \Z+\imath\Z, \text{ for all } c \in H_1(X,Z;\Z)^-\}.
$$
is equal to $1$ (see \cite[\textsection 4.1]{AEZ16}). 
Let $(\alpha_1,\beta_1,\alpha_2,\beta_2)$ be a symplectic basis of $H_1(X,\Z)^-$ and $\gamma$ the element of $H_1(X,Z;\Z)$ represented by either $s_1$ or $s_2$. We can consider $\alpha_1,\beta_1,\alpha_2,\beta_2$, and $\gamma$ as $\C$-linear forms on $H^1(X,Z;\C)^-$. Denote by $\ol{\alpha}_1,\ol{\beta}_1,\ol{\alpha}_2,\ol{\beta}_2,\ol{\gamma}$ their complex conjugate. 
Then by definition, we have
\begin{equation}\label{eq:MSV:vol:on:tQ:4:-1:4}
	d\vol=\left(\frac{\imath}{2}\right)^5\alpha_1\wedge\ol{\alpha}_1\wedge\dots\wedge\beta_2\wedge\ol{\beta}_2\wedge\gamma\wedge\ol{\gamma}.
\end{equation}
on $H^1(X,Z;\C)^-$. 

By construction, the  involution $\tau$ on $X$ induces an involution $\iota$ on $Y$ which is in fact the hyperelliptic involution. 
There is an isometry from the complement of a neighborhood of $s_1\cup s_2$  in $X$ onto a complement of a neighborhood of $\{w,w_0\}$ in  $Y$, where $w_0$ is the unique zero of $\eta$. This map induces a morphism $\varphi: H_1(X,\Z)^- \to H_1(Y,\Z)^- = H_1(Y,\Z)$ (here, $H_1(Y,\Z)^-$ is the set $\{c\in H_1(Y,\Z), \; \iota_*c =-c\}$). One can choose a symplectic basis $(\alpha_1,\beta_1,\alpha_2,\beta_2)$ of $H_1(X,\Z)^-$ such that $(\varphi(\alpha_1)/2,\varphi(\beta_1), \varphi(\alpha_2),\varphi(\beta_2))$ is a symplectic basis of $H_1(Y,\Z)$ (see \cite[\textsection 8C]{LN:finite}).
Let us write $\alpha'_i=\varphi(\alpha_i), \beta'_i=\varphi(\beta_i)$. Then by construction, the volume form $d\vol'$ induced by $d\vol$ on $H^1(Y,\C)$ is given by
$$
d\vol' =  \left(\frac{\imath}{2}\right)^4 \alpha'_1\wedge\ol{\alpha}'_1\wedge\dots\wedge\beta'_2\wedge\ol{\beta}'_2.
$$
On the other hand the Masur-Smillie-Veech volume on $H^1(Y,\C)$ is given by
$$
d\vol^{(2)}=\left(\frac{\imath}{2}\right)^4\frac{\alpha'_1}{2}\wedge\frac{\ol{\alpha}'_1}{2}\wedge\dots\wedge\beta'_2\wedge\ol{\beta}'_2.
$$
Thus we have 
$$
d\vol'= 4 d\vol^{(2)}.
$$
\end{proof}
Now
$$
\vol'(\Omega_1\Mcal_2(2)^*) = 4\vol^{(2)}_1(\Omega_1\Mcal_2(2)^*) = 4\cdot 5 \cdot \vol^{(2)}_1 (\Omega_1\Mcal_2(2))=20\cdot\frac{\vol_1(\Qcal_1(1,-1^5))}{5!} = \frac{\pi^4}{6}.
$$
Therefore
$$
\tilde{c}_2(4,-1^4)=\frac{1}{2}\cdot 3 \cdot \frac{\pi^4/6}{\pi^4/12}=3.
$$
\subsection{Case $k=3$} In this case $\alpha=1$, and $\Omega \Mcal$ is the space $\Prym(0^3)$ of triples of tori $\{(Y_j,\eta_j,y_j), \, j=0,1,2\}$, where $(Y_1,\eta_1)$ and $(Y_2,\eta_2)$ are isomorphic. Let $Y$ denote the disjoint union of $Y_0,Y_1,Y_2$. The data of $\eta_0,\eta_1, \eta_2$ give a holomorphic $1$-form $\eta$ on $Y$. Thus we can  write $(Y,\eta,y_0,y_i,y_2)$ instead of $\{(Y_j,\eta_j,y_j), \; j=0,1,2\}$. 

Assume now that $(Y,\eta,y_0,y_1,y_2)\in \Prym(0^3)$ is obtained from $(X,\omega)\in \tilde{\Qcal}(4,-1^4)$ by collapsing a rigid family of three saddle connections $s_0,s_1,s_2$. Note that in this case $s_0, s_1, s_2$ are homologous  one to  the other. By construction, the Prym involution $\tau$ of $X$ induces an involution $\tau_Y$ on $Y$ which permutes $Y_1, Y_2$, and acts by $-\id$ on $H_1(Y_0,\Z)$. 
By construction, the $1$-form $\eta$ satisfies $\tau^*_Y\eta=-\eta$.
The space $\Prym(0^3)$ is thus  locally modeled on $H^1(Y,\C)^-:=\ker(\tau^*_Y+\id) \subset H^1(Y,\C)$.
Define 
$$
H_1(Y,\Z)^-=\{c\in H_1(Y,\Z), \; \tau_{Y*}c=-c\}.  
$$    
Let $(\alpha_j,\beta_j)$ be a symplectic basis of $H_1(Y_j,\Z)$ where $\tau_{Y*}\alpha_1=-\alpha_2, \tau_{Y*}\beta_1=-\beta_2$.   
Let $\alpha=\alpha_1+\alpha_2, \beta=\beta_1+\beta_2$.
Then $(\alpha_0,\beta_0,\alpha,\beta)$ is a symplectic basis of $H_1(Y,\Z)^-$.  
Consider $\alpha_0,\beta_0,\alpha,\beta$ as $\C$-linear forms on $H^1(Y,\C)^-$. Then the volume form $d\vol'$ on $\Prym(0^3)$ satisfies 
$$
d\vol'=\left(\frac{\imath}{2}\right)^4\alpha_0\wedge\ol{\alpha}_0\wedge\dots\wedge\beta\wedge\ol{\beta}.
$$
Our goal is to compute the volume of $\Prym_1(0^3):=\Omega_1 \Mcal^{(3)}$ with respect to $d\vol'_1$. To this purpose, we first remark that $\Prym(0^3)$ is isomorphic to $\Omega\Mcal_1(0)\times \Omega\Mcal_1(0)$ via the mapping 
$$
\Phi: (Y,\eta,y_0,y_1,y_2)\mapsto ((Y_0,\eta_0,y_0),(Y_1,\eta_1,y_1)).
$$ 
Since $\Omega\Mcal_1(0)\times \Omega\Mcal_1(0)$ is locally modeled on $H^1(Y_0,\C)\times H^1(Y_1,\C)$, for all $f\in H^1(Y,\C)^-$, we have 
$$
\Phi(f) = (f_{\left|H_1(Y_0,\C) \right.}, f_{\left|H_1(Y_1,\C)\right.})\in H^1(Y_0,\C)\times H^1(Y_1,\C)
$$ 
Let $d\vol^*$ denote the volume form on $H^1(Y_0,\C)\times H^1(Y_1,\C)$ which is defined by 
$$
d\vol^*=\left(\frac{\imath}{2}\right)^4\alpha_0\wedge \ol{\alpha}_0\wedge\beta_0\wedge\ol{\beta}_0\wedge\alpha_1\wedge\ol{\alpha}_1\wedge \beta_1\wedge\ol{\beta}_1.
$$
Via the map $\Phi$, we can consider $d\vol^*$ as a volume form on $H^1(Y,\C)^-$. Since for all $\xi\in H^1(Y,\C)^-$ we have $\xi(\alpha)=2\xi(\alpha_1), \xi(\beta)=2\xi(\beta_1)$, it follows that
$$
d\vol'=2^4\cdot d\vol^*.
$$ 
Note that we have
$$
\Aa(Y,\eta)=\Aa(Y_0,\eta_0)+2\Aa(Y_1,\eta_0).
$$
The desired conclusion follows from the following lemma.
\begin{Lemma}\label{lm:compare:vols:prod:sp}
Let $E$ be a $\C$-vector space admitting a splitting $E=V\times W$, where $\dim_\C V=r, \dim_\C W=s$. Assume that $V$ (resp. $W$) is endowed with a volume form $\mu$  and a Hermitian form $h$ (resp. a volume form $\nu$ and a Hermitian form $g$). Let $V^+:=\{v\in V, \; h(v,v) >0\}$ and $W^+=\{w\in W, \; g(w,w)>0\}$. Let $A$ and $B$ be some open cones in $V^+$ and in $W^+$ respectively. 
For all $t\in \R_{>0}$, define 
$$
A_t=\{v\in A, \;  h(v,v) = t\} \quad \text{and} \quad A_{<t}=\{ v \in A, \; 0 < h(v,v) < t\}.
$$
Similarly,
$$
B_t=\{w\in B, \; g(w,w)=t \} \quad \text{ and } \quad B_{<t} = \{ w \in B, \; 0< g(w,w) < t\}.
$$
Let $C:=A\times B \subset E$. Given  two positive real numbers $a,b$, define
$$
C(a,b)_{<1}:=\{(v,w) \in C, \; 0 < ah(v,v)+bg(w,w) <1\}
$$ 
Let $\lambda:=\mu\otimes \nu$.
Assume that $\mu(A_{<1})$ and $\nu(B_{<1})$ are finite.
Then we have
\begin{equation}\label{eq:vol:cone:prod}
\lambda(C(a,b)_{<1})=\frac{r!s!}{(r+s)!}\cdot\frac{\mu(A_{<1})\nu(B_{<1})}{a^rb^s}.
\end{equation}
\end{Lemma}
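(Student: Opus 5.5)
The natural tool is homogeneity. The volume forms $\mu$, $\nu$ are Lebesgue-type measures on complex vector spaces of complex dimension $r$ and $s$, so under scaling they transform with real degrees $2r$ and $2s$. The quadratic forms $h$, $g$ scale with degree $2$. So we have $\mu(A_{<t}) = t^{r}\mu(A_{<1})$ and $\nu(B_{<t}) = t^{s}\nu(B_{<1})$, because $A$ and $B$ are cones and $v\mapsto \sqrt{t}\,v$ maps $A_{<1}$ onto $A_{<t}$.

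First I would reduce to $a=b=1$. The map $(v,w)\mapsto(\sqrt a\, v,\sqrt b\, w)$ sends $C(a,b)_{<1}$ onto $C(1,1)_{<1}$, since the cones are preserved. Its Jacobian for $\lambda$ is $a^{r}b^{s}$. Hence $\lambda(C(a,b)_{<1}) = a^{-r}b^{-s}\lambda(C(1,1)_{<1})$.

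Next I would compute $\lambda(C(1,1)_{<1})$ by Fubini. Introduce the distribution functions $F(t):=\mu(A_{<t}) = m t^{r}$ with $m=\mu(A_{<1})$, and $G(u):=\nu(B_{<u}) = n u^{s}$ with $n=\nu(B_{<1})$. These are finite by hypothesis. The pushforward of $\mu|_A$ under $v\mapsto h(v,v)$ is then the measure $dF = r m t^{r-1}dt$ on $(0,\infty)$, and similarly for $\nu|_B$. Fubini gives
\[
\lambda(C(1,1)_{<1}) = \int_0^1 G(1-t)\, dF(t) = r m n\int_0^1 t^{r-1}(1-t)^{s}\,dt = rmn\, B(r,s+1).
\]
Evaluating the Beta integral, $rB(r,s+1) = r\frac{(r-1)!\,s!}{(r+s)!} = \frac{r!\,s!}{(r+s)!}$. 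Combining this with the first step yields \eqref{eq:vol:cone:prod}.

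The only point needing care is the identification of the pushforward measure. We need $\mu(\{v\in A: h(v,v)<t\})$ to be exactly $m t^r$; in particular the level sets $A_t$ must have measure zero. This follows from the scaling identity, because $t\mapsto mt^r$ is continuous and so has no atoms. I do not expect any real obstacle beyond this: the lemma is a pure homogeneity-plus-Beta-function computation. It requires no nondegeneracy of $h$ or $g$ beyond positivity on the cones.
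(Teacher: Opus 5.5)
Your argument is correct and follows essentially the same route as the paper: reduce to $a=b=1$ by scaling, slice $C(1,1)_{<1}$ over $A$ via Fubini using $\nu(B_{<u})=u^s\nu(B_{<1})$, and conclude with a Beta integral. The only difference is in how the radial integral over $A$ is handled. You read off the distribution of $h(v,v)$ under $\mu$ directly from the homogeneity $\mu(A_{<t})=t^r\mu(A_{<1})$. The paper instead introduces polar coordinates $(t,v)\mapsto tv$ on $\R_{>0}\times A_1$ and computes a Jacobian to show $d\sigma_t=t^{2r-1}d\sigma_1$, so your version avoids constructing a surface measure on $A_1$.
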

\begin{proof}
We can reparametrize $A$ by $\R_{>0}\times A_1$ via the map
$F(t,v) \mapsto tv$ for all $(t,v) \in \R_{>0}\times A_1$.
The pullback of $d\mu$ by $F$ can be written as $dt\wedge d\sigma_t$, where $\sigma_t$ is a volume form on $A_1$ depending on $t$. 
We claim that
\begin{equation}\label{eq:ratio:vols:At:A1} 
	d\sigma_t=t^{2r-1}d\sigma_1.
\end{equation}
Indeed, consider an open subset $\Omega$ of $A_1$ where a system of real local coordinates $(\omega_1,\dots, \omega_{2r-1})$ can be defined. Denote  the volume form $d\omega_1\wedge\dots\wedge d\omega_{2r-1}$ by $d\omega$.
Choose   a system of real coordinates $(v_1,\dots,v_{2r})$ on $V$ such that $d\mu=dv_1\wedge\dots\wedge dv_{2r}$. 
By definition,
$$
d\sigma_t = J_F(t,v)d\omega
$$
where $J_F$ is the Jacobian of $F$ with respect to the coordinate systems $(t,\omega_1,\dots,\omega_{2r-1})$ and $(v_1,\dots,v_{2r})$. 

Given $t\in \R_{>0}$, consider the map $\psi_t: V \to V, \; v\mapsto tv$. We have $\psi\circ F = F\circ L_t$, where $L_t(x,v)=(tx,v)$. By the chain rule, we get 
$$
d\psi_t(v)\circ dF(1,v) = dF(t,v)\circ dL_t(1,v).
$$
Since $	dL_t(1,v)=\left( 
	\begin{array}{cc}
		t & 0\\
		0 & I_{2r-1}
	\end{array}	
	\right)
	$ and $d\psi(v)=tI_{2r}$, we get that 
	$$
	t^{2r}J_F(1,v)=tJ_F(t,v)
	$$ 
	and \eqref{eq:ratio:vols:At:A1} follows. 	
As a consequence of \eqref{eq:ratio:vols:At:A1}, we get	
\begin{equation}\label{eq:ratio:vols:A1:n:CA1}
	\mu(A_{<1})=\int_0^1\left(\int_{A_1}d\sigma_t\right)dt =\int_{A_1}\sigma_1\cdot\int_0^1t^{2r-1}dt=\frac{\sigma_1(A_1)}{2r}.
\end{equation}
We now prove \eqref{eq:vol:cone:prod} in the case $a=b=1$.  By definition for all  $v\in A_{<1}$, we have $(v,w) \in C(1,1)_{<1}$ if and only if $w \in B_{<1-h(v,v)}$. Thus
	\begin{align*}
	\mu(C(1,1)_{<1}) & =\int_{A_{<1}} \nu(B_{< 1-h(v,v)})d\mu(v)=\nu(B_{<1})\cdot \int_{A_{<1}}(1-h(v,v))^sd\mu(v) \\
		 &=\nu(B_{<1})\sigma_1(A_1)\cdot\int_0^1(1-t^2)^st^{2r-1}dt\\
		 &=\nu(B_{<1})\cdot\frac{\sigma_1(A_1)}{2}\int_0^1(1-x)^sx^{r-1}dx \quad \hbox{(change of variable $x=t^2$)}\\
		 &= \nu(B_{<1})\cdot \frac{\sigma_1(A_1)}{2}\cdot\frac{s!(r-1)!}{(r+s)!}\\
		 & = \nu(B_{<1})\cdot \frac{\sigma_1(A_1)}{2r}\cdot\frac{s!r!}{(r+s)!}\\
		 & = \nu(B_{<1})\cdot \mu(A_{<1})\cdot\frac{s!r!}{(r+s)!} \quad \hbox{(by \eqref{eq:ratio:vols:A1:n:CA1})}.  	
	\end{align*}
For the general case, we replace $h$ by $h':=ah$ and $g$ by $g':=bg$. Since we have
$$
\mu(\{v\in A, \; 0< h'(v,v) < 1\}) =\mu(\{v \in A, \; 0 < h(v,v) < \frac{1}{a}\}) = \frac{\mu(A_{<1})}{a^r}
$$
and
$$
\nu(\{w\in B, \; 0< g'(w,w) < 1\}) =\nu(\{w \in B, \; 0 < g(w,w) < \frac{1}{b}\}) = \frac{\nu(B_{<1})}{b^s}
$$ 
equality \eqref{eq:vol:cone:prod} follows from the particular case $a=b=1$ applied to $h'$ and $g'$.	
\end{proof}
\begin{Remark}
	In the case $a=b=1$, Lemma~\ref{lm:compare:vols:prod:sp} is a reformulation a result in \cite[\textsection 6.2]{EMZ03}.
\end{Remark}

Applying Lemma~\ref{lm:compare:vols:prod:sp} to the case $E=H^1(Y,\C)^-$, $V=H^1(Y_0,\C), W=H^1(Y_1,\C)$, $h$ and $g$ correspond to area functions, $a=1$ and $b=2$, we obtain 
\begin{equation}\label{eq:vol:cones:prod:sp}
d\vol^*(\Prym_{\leq 1}(0^3))=\frac{2!2!}{2^2\cdot 4!}\cdot\mu(\Omega_{\leq 1}\Mcal_1(0))^2=\frac{\mu(\Omega_{\leq 1}\Mcal_1(0))^2}{24},
\end{equation}
where $\Prym_{\leq 1}(0^3)$ and $\Omega_{\leq 1}\Mcal_1(2)$ are the set of surfaces with area at most $1$ in $\Prym(0^3)$ and in $\Omega_{\leq 1}\Mcal_1(2)$ respectively, and $\mu$ is the Masur-Smillie-Veech volume on $\Omega \Mcal_1(0)$.

Let $\vol^*_1$ and $\mu_1$ be the measures induced by $\vol^*$ and by $\mu$ on $\Prym_1(0^3)$ and on $\Omega_1\Mcal_1(0)$ respectively. By definition, we have
$$
\vol^*_1(\Prym_1(0^3))=8\vol^*(\Prym_{\leq 1}(0^3)) \quad \text{ and } \quad \mu_1(\Omega_1\Mcal_1(2))=4\mu(\Omega_{\leq 1}\Mcal_1(0)).
$$ 
Thus we get
$$
\vol^*_1(\Prym_1(0^3))=\frac{\mu_1(\Omega_1\Mcal_1(0))^2}{48}.
$$ 
Now as $\mu_1(\Omega_1\Mcal_1(0))=\pi^2/3$ (see \cite[Tab. 1]{EMZ03}), and $d\vol'=16d\vol^*$, we have
$$
\displaystyle \tilde{c}_3(4,-1^4)=\frac{1}{2}\cdot 1 \cdot\frac{16\pi^4/(9\cdot 48)}{\pi^4/12}=\frac{2}{9}.
$$

\end{document}